\theoremstyle{plain}
\theoremstyle{definition}
\newtheorem{thm}{Theorem}[section] % reset theorem numbering for each chapter
\newtheorem*{thm*}{Theorem}% Theorem without numbering
\newtheorem{defn}[thm]{Definition} % definition numbers are dependent on theorem numbers
\newtheorem{eg}[thm]{Example} % same for example numbers
\newtheorem{lma}[thm]{Lemma} % same for example numbers
\newtheorem{propn}[thm]{Proposition} % same for example numbers
\newtheorem{remq}[thm]{Remark}
\newtheorem{cor}[thm]{Corollary}
\newtheorem{assumption}[thm]{Assumption}
\newtheorem*{problem}{Problem}
\newcommand\restr[2]{{% we make the whole thing an ordinary symbol
		\left.\kern-\nulldelimiterspace % automatically resize the bar with \right
		#1 % the function
		\vphantom{\big|} % pretend it's a little taller at normal size
		\right|_{#2} % this is the delimiter
}}
\newcommand{\Ci}{\mathbb{C}}
\newcommand{\Z}{\mathbb{Z}}
\newcommand{\N}{\mathbb{N}}
\newcommand{\R}{\mathbb{R}}
\newcommand{\Pe}{\mathbb{P}}
\newcommand\Set[2]{\{\,#1\mid#2\,\}}
\newcommand{\vi}{\text{v}}
\newcommand{\w}{\text{w}}
\newcommand{\Stab}[2]{\text{Stab}_\mathfrak{#1}(#2)}
\newcommand{\Base}[2]{\mathscr{B}_{#1,#2}}
\newcommand{\Repn}[3]{\text{Rep}_{#1}({#2},{#3})}
\newcommand{\pt}{\ast }
\newcommand{\Epic}[2]{\mathscr{E}_{\text{Pic}_{#1}(#2)}}
\newcommand{\G}{\text{G}}
\newcommand{\Gr}[2]{\text{Gr}({#1},{#2})}
\newcommand{\rk}{\text{rk}}
\newcommand{\ind}{\text{ind}}
\newcommand{\Pic}[2]{\text{Pic}_{#1}(#2)}
\newcommand{\Lie}{\text{Lie}}
\newcommand{\Hom}{\text{Hom}}
\newcommand{\V}{\mathscr{V}}
\newcommand{\aV}{\widetilde{\mathscr{V}}}
\newcommand{\W}{\mathscr{W}}
\newcommand{\aW}{\widetilde{\mathscr{W}}}
\newcommand{\Nbund}{\mathfrak{N}}
\newcommand{\n}{\mathfrak{n}}
\newcommand{\g}{\mathfrak{g}}
\newcommand{\GL}{\text{GL}}
\newcommand{\naka}{\mathfrak{M}}
\newcommand{\Att}[2]{\text{Att}_{\mathfrak{#1}}({#2})}
\newcommand{\Fl}{\text{Fl}}
\newcommand{\Shuffle}{\text{Shuffle}}
\newcommand{\Char}{\text{Char}}
\newcommand{\Hilb}{\text{Hilb}}
\newdimen\deltay
\def\Ddot#1#2(#3,#4,#5,#6){\deltay=#6\setbox1=\hbox to0pt{\smash{\dotcnt=1
\kern#3\loop\raise\dotcnt\deltay\hbox to0pt{\hss#2}\kern#5\ifnum\dotcnt<#1
\advance\dotcnt 1\repeat}\hss}\setbox2=\vtop{\box1}\ht2=#4\box2}
\newcommand{\Addresses}{{% additional braces for segregating \footnotesize
  \bigskip
  \footnotesize

  \textsc{Tommaso Maria~Botta, Departement Mathematik, ETH Zürich,
    8092 Zürich, Switzerland}\par\nopagebreak
  \textit{E-mail address}: \texttt{tommaso.botta@math.ethz.ch}

}}
\title{Shuffle products for elliptic stable envelopes of Nakajima varieties}
\author{Tommaso Maria Botta}
\date{}
\begin{document}
\maketitle

%%%%%%%%%%%%%%%%%%%%%%%%%%%%
%%%%%%%%%%%%%%%%%%%%%%%%%%%%

\begin{abstract}
    We find an explicit formula that produces inductively the elliptic stable envelopes of an arbitrary Nakajima variety associated to a quiver $Q$ from the ones of those Nakajima varieties whose framing vectors are the fundamental vectors of the quiver $Q$, i.e. the dimension vectors with just one unitary nonzero entry. The result relies on abelianization of stable envelopes. As an application, we combine our result with Smirnov's formula for the elliptic stable envelopes of the Hilbert scheme of points on the plane to produce the elliptic stable envelopes of the instanton moduli space. 
\end{abstract}

%%%%%%%%%%%%%%%%%%%%%%%%%%%%
%%%%%%%%%%%%%%%%%%%%%%%%%%%%

%\textwidth=125mm
%\textheight=185mm
%\parindent=8mm
%\evensidemargin=0pt
%\oddsidemargin=0pt
%\frenchspacing

%%%%%%%%%%%%%%%%%%%%%%%%%%%%
%%%%%%%%%%%%%%%%%%%%%%%%%%%%

\section{Introduction}

\subsection{Overview}

Let $T$ be a complex torus and $X$ a quasi-projective symplectic $T$-variety such that the torus $T$ rescales the symplectic form with weight $h^{-1}$. Set $A=\ker(h)$. 
Stable envelopes of $X$ \cite{maulik2012quantum} \cite{okounkov2020inductiveI} \cite{okounkov2020inductiveII} are certain uniquely defined correspondences in $X^A\times X$ that induce maps from the equivariant cohomology of the fixed locus $X^A$ to the equivariant cohomology of $X$. Originally introduced in \cite{maulik2012quantum} as Lagrangian cycles in equivariant singular cohomology, they have been more recently defined in K-theory and elliptic cohomology \cite{Ganter_2014}\cite{ginzburg1995elliptic}\cite{Grojnowski2007EllipticCD}. 
Stable envelopes of Nakajima quiver varieties play a privileged role in the theory because they have proven to be the key tools to link the rich geometry of these varieties with the realms of quantum groups and $q$-difference equations.  

Cohomological stable envelopes of Nakajima varieties have been used to associate to every quiver $Q$ a Hopf algebra $Y(\g_Q)$, called Yangian, acting on the cohomology of these varieties \cite{maulik2012quantum}. Similarly, the K-theoretic lift of stable envelopes has been used to produce actions of the quantum loop algebras $\mathscr{U}(\hat\g_Q)$, in terms of which the $q$-difference equations for Nakajima varieties have been determined \cite{okounkovsmirnov2016Kthy}, see also \cite{aganagic2018quantum}\cite{aganagic2016elliptic} \cite{okounkov2017enumerative}\cite{okounkov2020inductiveII} for further developments.
In all these results, the key role of stable envelopes is to build automorphisms of the cohomology ring of the fixed locus $X^A$ satisfying the Yang-Baxter equation
\[
R^{(12)}(u)R^{(13)}(u+v)R^{(23)}(v)=R^{(23)}(v)R^{(13)}(u+v)R^{(12)}(u).
\]
More recently in \cite{aganagic2016elliptic} \cite{okounkov2020inductiveI}\cite{okounkov2020inductiveII}, these ideas were generalized to a third generalized cohomology theory, namely elliptic cohomology. The associated elliptic stable envelopes allow to produce solutions $R(u,z)$ of the dynamical Yang Baxter equation,
originally introduced in \cite{felder1994conformal} and \cite{felder1994ellipticquantumgroups}, which depend on additional parameters $z$ living in a complex abelian Lie algebra $\mathfrak{h}$, called Kähler parameter.

Recall that given an elliptic curve $E=\Ci^\times/q^{\Z}$, the 0-th elliptic cohomology functor
\[
E_T(-):T\text{-spaces}\to \text{Schemes}_{E_T}
\]
is a functor from the category of $T$-equivariant topological spaces to schemes over $E_T:=E_T(\pt)=E\otimes_\Z\Char(T)$.
In the original presentation in \cite{aganagic2016elliptic}, the authors defined the elliptic stable envelopes $\text{Stab}_{\mathfrak{C}}$ of a Nakajima variety $X$ as morphisms of certain sheaves over $E_T\times \Epic{T}{X}$, where $\Epic{T}{X}$ is the abelian variety $\Pic{T}{X}\otimes_{\Z}E$, which is the geometric analogue of the Cartan algebra $\mathfrak{h}$, and should be thought as the space where the variables $z$ live in the geometric setting. The aforementioned sheaves arise as pushforwards of some line bundles over $E_T(X^A)\times \Epic{T}{X^A}$ and $E_T(X)\times \Epic{T}{X}$, on the domain and on the codomain respectively. 
Therefore, following the general philosophy of stable envelopes, they might be thought as maps from the cohomolgy of the fixed locus $X^A$ to the cohomology of $X$.

Existence of stable envelopes for Nakajima varieties was proved in \cite{aganagic2016elliptic} in two steps. In the first one the authors explicitly built stable envelopes for smooth hypertoric varieties. In the second one they associated to every Nakajima variety $X$ an hypertoric variety $X_S$ and showed that stable envelopes of $X$ can be obtained from the ones of $X_S$ via a chain of maps.
Recall that Nakajima varieties $\naka_{Q}(\vi,\w)$ are algebraic symplectic reductions of the space of linear representations $T^*\Repn{Q}{\vi}{\w}$ associated to a quiver $(I,Q)$ and dimension vectors $\vi,\w\in N^{I}$ with respect to the natural action of $G_{\vi}=\prod_{i\in I}\GL({\vi_i})$:
\[
\naka_{Q}(\vi,\w)=T^*\Repn{Q}{\vi}{\w}////G_{\vi}.
\]
The varieties $X_S$ are called \emph{abelianizations} of Nakajima varieties in the literature, and are realized as symplectic reductions of $T^*\Repn{Q}{\vi}{\w}$ with respect to the action of a maximal torus $S$ in $G_\vi$:
\[
T^*\Repn{Q}{\vi}{\w}////S.
\]
This construction suggests to investigate the relationship between stable envelopes of a Nakajima variety $X$ and the ones associated to the symplectic reduction of $T^*\Repn{Q}{\vi}{\w}$ with respect to the action of a connected algebraic group $H$ interlacing $S$ and $G_{\vi}$:
\[
S\subset H\subset G_\vi.
\]
In this article we study the case when $H\cong G_{\vi'}\times G_{\vi''}$, with $\vi'+\vi''=\vi$. After having observed that, in complete analogy to the case $H=S$ proved in \cite{aganagic2016elliptic}, stable envelopes of $X$ can be recovered from the ones of
\[
T^*\Repn{Q}{\vi}{\w}////G_{\vi'}\times G_{\vi''},
\]
we show that the latter can be built in turn from the stable envelopes of two ``smaller" Nakajima varieties 
\[
T^*\Repn{Q}{\vi'}{\w'}////G_{\vi'} \qquad T^*\Repn{Q}{\vi''}{\w''}////G_{\vi''}
\]
such that $\vi'+\vi''=\vi$ and $\w'+\w''=\w$.

Combining these results, we deduce that when $A=A_\w$, where $A_\w$ is a maximal torus of $G_\w$ acting on $\naka_Q(\vi,\w)$, then we have:
\begin{thm*}[\ref{shuffle theorem}]
Let $\mathfrak{C}$ be a chamber and $F$ a fixed component of $\naka_{Q}(\vi,\w)$. There exist decompositions $\w=\w'+\w''$ and $\vi=\vi'+ \vi''$ and fixed components $F^{\vi'}\subset \naka_{Q}(\vi',\w')$ and $F^{\vi''}\subset \naka_{Q}(\vi'',\w'')$ such that:
\begin{equation*}
    \Stab{C}{F}=\Shuffle\restr{\left\lbrace\frac{\Theta(\mathscr{N}_{\hat j })\tau^*(-h\delta)\left((\Stab{C'}{F^{\vi'}}\boxtimes \Stab{C''}{F^{\vi''}})^{lift}\right)\circ (\pi^{A}!)^{-1}}{\Theta(\Nbund^\vee)\Theta(h^{-1}\Nbund^\vee)}\right\rbrace}{z'=z''=z}.
\end{equation*}
\end{thm*}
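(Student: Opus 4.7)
The plan is to carry out the two inductive reductions sketched in the introduction in succession. Since $A = A_\w$ is a maximal torus of the framing group, every fixed component $F$ of $\naka_Q(\vi,\w)$ canonically produces an $A$-eigenspace decomposition of the framing $\w = \w' + \w''$ together with a compatible splitting $\vi = \vi' + \vi''$, so that $F$ is identified with a product $F^{\vi'} \times F^{\vi''} \subset \naka_Q(\vi',\w') \times \naka_Q(\vi'',\w'')$, while $\mathfrak{C}$ restricts to chambers $\mathfrak{C}'$, $\mathfrak{C}''$ on the two factors. This is the pair of decompositions asserted by the theorem.

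For the first reduction I would set $Y = T^*\Repn{Q}{\vi}{\w}////(G_{\vi'} \times G_{\vi''})$ and invoke the result observed in the introduction that, in complete analogy with the Aganagic--Okounkov descent from the hypertoric abelianization, stable envelopes of $\naka_Q(\vi,\w)$ are recovered from those of $Y$ by symmetrizing over the residual Weyl coset $W_{G_\vi}/(W_{G_{\vi'}} \times W_{G_{\vi''}})$ and dividing by the theta class of the normal bundle to the partial flag fibration $Y \to \naka_Q(\vi,\w)$. In the cotangent setting this normal bundle has the form $\Nbund \oplus h^{-1}\Nbund^\vee$, producing the denominator $\Theta(\Nbund^\vee)\Theta(h^{-1}\Nbund^\vee)$, while the coset symmetrization is precisely the operator $\Shuffle$.

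For the second reduction, $Y$ itself must be compared to $\naka_Q(\vi',\w') \times \naka_Q(\vi'',\w'')$: both are $(G_{\vi'}\times G_{\vi''})$-quotients of cotangent representation spaces, but $T^*\Repn{Q}{\vi}{\w}$ contains, in addition to the two diagonal blocks $T^*\Repn{Q}{\vi'}{\w'}$ and $T^*\Repn{Q}{\vi''}{\w''}$, the off-diagonal arrows between vertices of the two pieces of the quiver decomposition. These assemble into the bundle $\mathscr{N}_{\hat j}$, whose theta class multiplies the lifted box product $(\Stab{C'}{F^{\vi'}} \boxtimes \Stab{C''}{F^{\vi''}})^{lift}$, composed with the inverse elliptic pushforward $(\pi^A!)^{-1}$ that transports classes across the correspondence induced by the off-diagonal summand. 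The shift $\tau^*(-h\delta)$ arises from the polarization required to match the product cotangent polarization on the two factors with the polarization used on the total space $T^*\Repn{Q}{\vi}{\w}$. Composing the two steps yields the stated formula; the final restriction $z'=z''=z$ reflects the diagonal embedding $\Epic{T}{\naka_Q(\vi,\w)} \hookrightarrow \Epic{T}{\naka_Q(\vi',\w')} \times \Epic{T}{\naka_Q(\vi'',\w'')}$ of Picard varieties.

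The principal obstacle will be the book-keeping: verifying that the dynamical shift, the polarization conventions, and the theta factors produced in each reduction step assemble into the stated expression without leftover corrections. This requires carefully tracking how the decomposition of $T^*\Repn{Q}{\vi}{\w}$ interacts with the cotangent structure, and in particular checking that the off-diagonal summand $\mathscr{N}_{\hat j}$ contributes with the correct polarization opposite to that of the flag normal bundle $\Nbund$, so that the theta factors in Steps 1 and 2 do not collide or cancel unexpectedly. A secondary technical point is showing that $(\pi^A!)^{-1}$ is well-defined on the class in question, which should follow from the smoothness of $F^{\vi'}\times F^{\vi''}$ and the nondegeneracy of the $A$-weights on $\mathscr{N}_{\hat j}$ determined by $\mathfrak{C}$.
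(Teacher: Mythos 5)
Your two-step architecture matches the paper's: first pass from $\naka_Q(\vi,\w)$ to the refinement $X_\w^{\vi',\vi''}$ via the abelianization-type descent of Theorem~\ref{theorem abelianization}, then relate $X_\w^{\vi',\vi''}$ to $\naka_Q(\vi',\w')\times\naka_Q(\vi'',\w'')$ via the embedding $j$ and multiplication by $\Theta(\mathscr{N}_{\hat j})$, as in Proposition~\ref{ main proposition }. But what you classify as ``book-keeping'' at the end is in fact where the substance of the proof lies, and as written your argument has a gap there.

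The crucial point is that Proposition~\ref{ main proposition } is \emph{conditional}: it only applies under Assumption~\ref{Assumption main theorem}, namely that the repelling directions of $F^{\vi'}\times F^{\vi''}$ inside $X_\w^{\vi',\vi''}$ that are transverse to $X_{\w'}^{\vi'}\times X_{\w''}^{\vi''}$ coincide exactly with the $\overline{\mathfrak C}$-repelling fibers of $\mathscr{N}_j$; equivalently, that $\Att{C}{F^{\vi'}\times F^{\vi''}}\subset \Att{\overline C}{X_{\w'}^{\vi'}\times X_{\w''}^{\vi''}}$. This is a genuine geometric constraint, not automatic: it is precisely what forces the restriction to $A=A_\w$ in the statement, and it must be verified. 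In the paper this is Lemma~\ref{Lemma Combinatioin conditions}, which analyzes the compensating map of the fixed component to show that $\mathscr{M}(V',V'',W',W'')$ has no non-repelling weights. Without this verification, the stable envelope formula of Proposition~\ref{ main proposition } is not available for the component you need. Similarly, Theorem~\ref{theorem abelianization} attaches to $(F,\mathfrak C)$ and to a chosen parabolic $P\supset G_{\vi',\vi''}$ a \emph{specific} fixed component $F^{\vi',\vi''}$ in the refinement (Lemma~\ref{lemma choice componets above}); you must also show that $P$ and the splittings can be chosen so that $F^{\vi',\vi''}$ lies in the image of $j$. This is Lemma~\ref{Lemma existence splittings}; it is not a consequence of the $A$-eigenspace decomposition being ``canonical'' --- the eigenspace decomposition breaks $W$ into $k=\dim W$ pieces and there are $k-1$ different cut points, each producing a valid (and different) shuffle formula.

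Your description of $(\pi^A!)^{-1}$ is also off: it is not a transport ``across the correspondence induced by the off-diagonal summand'' but simply the restriction to fixed loci of the flag fibration $\pi:\Fl\to X_\w^{\vi}$ appearing in the L-shaped diagram of Theorem~\ref{theorem abelianization}. The paper's Lemma just before the main theorem shows that on fixed loci $j_+^A$ and $j_-^A$ are identities and $\pi^A$ is an isomorphism; its role is a normalization on the domain side. The off-diagonal arrows between the two quiver pieces contribute only to $\mathscr{N}_{\hat j}$ (and its complement), not to the $\pi!$-correction, whose explicit form is the $\Shuffle$ operator with denominator $\Theta(\Nbund^\vee)$.
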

Here $\mathcal{N}_{\hat j}$ and $\mathfrak{N}^\vee$ are certain K-theoretic classes and $\Theta$ denotes their elliptic Thom class, $\tau(-h\delta)$ is a shift of K\"ahler parameters and $\pi^A!$ is an isomorphism. The operator $\Shuffle$ permutes the Chern roots of the varieties $\naka_{Q}(\vi',\w')$ and $\naka_{Q}(\vi'',\w'')$ by shuffles of type $(\vi_i',\vi_i'')$ for every $i\in I$. A more precise statement together with a detailed explanation of the notation will be given in the next sections.

The significance of this result is due to the fact that given a chamber $\mathfrak{C}$ there exist $m-1$ choices of decompositions $\w=\w'+\w''$ and $\vi=\vi'+ \vi''$ that satisfy the requirements of the theorem, where $m=\sum_{i}\w_i$. As a consequence, the stable envelopes of the Nakajima varieties $\naka_{Q}(\vi,\delta_i)$, where $\delta_i\in \N^I$ are the fundamental dimension vectors, defined as the dimension vectors with a one in the $i$-th entry and zeros elsewhere, determine the stable envelopes of arbitrary Nakajima varieties of the same type $Q$.
This result generalizes the inductive construction of stable envelopes of the cotangent bundles of Grassmannias $T^*\Gr{k}{n}$ proposed in \cite{Felder2018}.

Inspired by that paper, we might call the previous formula \emph{shuffle product of stable envelopes}.
The result also suggests an intimate connection between elliptic stables and the cohomological Hall algebras defined by Kontsevich and Soibelman in \cite{kontsevich2011cohomological}. Indeed, the operator
\begin{equation}
    \label{CoHa operator}
    \Shuffle\left\lbrace \frac{\Theta(\mathcal{N}_{\hat j})}{\Theta(\Nbund^\vee)\Theta(h^{-1}\Nbund^\vee)}\cdot\right\rbrace
\end{equation}
can be seen as the elliptic version of the shuffle operator defining the algebra structure of the cohomological Hall algebra of the preprojective path algebra of the quiver $Q$, cf. \cite{kontsevich2011cohomological}\cite{yang2017quiver}\cite{yang2018coHa}. The connection becomes apparent expressing \eqref{CoHa operator} in terms of the Chern roots.

\subsection{Organization of the paper}

After having presented basic notions about the geometry and topology of Nakajima varieties $\naka_{Q}(\vi,\w)$
and the definition of elliptic stable envelopes in Section \ref{Section 2}, in Section \ref{Section 3} we generalize the construction of the abelianizations of Nakajima varieties as in \cite[Section 4]{aganagic2016elliptic} by considering GIT symplectic reductions of $T^*\Repn{Q}{\vi}{\w}$ by algebraic groups isomorphic to $G_{\vi'}\times G_{\vi''}$, which we call refinements of $\naka_{Q}(\vi,\w)$, and we show that the stable envelopes of these varieties exist and allow to recover the stable envelopes of $\naka_{Q}(\vi,\w)$.

In Section \ref{Section 4}, we prove that the Cartesian product of smaller Nakajima varieties $\naka_{Q}(\vi',\w')\times \naka_{Q}(\vi'',\w'')$ is an embedded subvariety of the refinement $T^*\Repn{Q}{\vi}{\w}////G_{\vi'}\times G_{\vi''}$, and in the following section we show that the stable envelopes $\Stab{C}{F}$ of the refinement 
\[
T^*\Repn{Q}{\vi}{\w}////G_{\vi'}\times G_{\vi''}
\]
with $F\subset \naka_{Q}(\vi',\w')\times \naka_{Q}(\vi',\w')$ can be produced from the ones of $\naka_{Q}(\vi',\w')$ and $ \naka_{Q}(\vi'',\w'')$. This is the content of Proposition \ref{ main proposition }.

Finally, in Theorem \ref{shuffle theorem} of Section \ref{Section 6}, we combine the two aforementioned results to write an explicit formula that produces stable envelopes of the Nakajima variety $\naka_{Q}(\vi,\w)$ from the stable envelopes of $\naka_{Q}(\vi',\w')$ and $\naka_{Q}(\vi'',\w'')$.

In the last section we exploit Theorem \ref{shuffle theorem} to produce inductive formulas for the stable envelopes of some famous Nakajima varieties. First, applying our results to the cotangent bundle of Grassmannians $T^*\Gr{k}{n}$, we give a new proof of the shuffle formula in \cite{felder1994conformal}. Secondly, we exhibit a shuffle formula for stable envelopes of the instanton moduli space $\mathcal{M}(v,w)$ equipped with its standard torus action. Technically speaking, this is not the torus action considered in Theorem \ref{shuffle theorem}, but we will show that for some choices of chambers our formula works without changes. This result, combined with the explicit formula for the stable envelopes of the Hilbert schemes of $v$-points in the plane $\Ci^{2}$ \cite{smirnov2018elliptic}, gives the first explicit formula for the elliptic stable envelopes for $\mathcal{M}(v,w)$ known to the author.
\subsection{Acknowledgments}
I want to thank my advisor Giovanni Felder, for having pushed me to learn about this beautiful subject during my master thesis, for his guidance and for his constant attention throughout the development of this project. I also thank Stefano D'Alesio for his suggestions and friendly explanations and Richard Rimanyi, who drew my attention to a mistake in an earlier version.

This work was supported by the National Centre of Competence in
Research SwissMAP--The Mathematics of Physics-- and grant $200021\_196892$ of the Swiss National
Science Foundation.

\newpage

%%%%%%%%%%%%%%%%%%%%%%%%%%%%
%%%%%%%%%%%%%%%%%%%%%%%%%%%%

\section{Definitions and notation}
\label{Section 2}
\subsection{Linear representations of a quiver}

Let $(Q,I)$ be a quiver, i.e. an oriented graph, with finite vertex set $I$, where loops and multiple edges are allowed. The quiver data is encoded in the adjacency matrix $Q=\lbrace Q_{ij} \rbrace$,
where
\[
Q_{ij}=\lbrace\text{number of edges from $i$ to $j$}\rbrace.
\]
By abuse of notation, we identify the set of oriented edges with the matrix $Q$.
There are two natural ``head" and ``tail" maps $h,t:Q\to I$ that associate to an oriented edge $e\in Q$ its head and tail vertices $h(e)$ and $t(e)$ respectively.

A representation of a quiver$(Q,I)$ is an assignment of a vector space $V_i$ to every vertex $i\in I$ and a map $x_{t(e),h(e)}\in \Hom(V_{t(e)},V_{h(e)})$ to every edge $e\in Q$.

Given a dimension vector $\vi\in N^I$, we define the space of representations $\text{Rep}_Q(\vi)$ as the set of representations of $Q$ such that $V_i=\Ci^{\vi_i}$, endowed with the structure of a vector space. In other words we have
\begin{equation}
\label{lin repn}
\text{Rep}_Q(\vi)=\bigoplus_{e\in Q}\Hom(V_{(t(e))}, V_{(h(e))}).
\end{equation}
Given a quiver $Q$, we define the framed doubled quiver $(\overline Q, \overline I)$ as the quiver whose vertex set $\overline I$ consists of two copies of $I$, and whose adjacency matrix is
\begin{equation}
\label{framed dubled quiver}
\overline Q=\begin{pmatrix}
Q+Q^T & 1\\
1 & 0
\end{pmatrix}
\end{equation}
Using equations \eqref{lin repn} and \eqref{framed dubled quiver}, one sees that the space of linear representations $\text{Rep}_{\overline Q}{(\vi,\w)}$, associated to the dimension vectors $\vi, \w\in N^I$, corresponding to the vector spaces assigned to the vertices lying inside the two copies of $I$ in $\overline I$, is isomorphic to
\[
T^*\Repn{Q}{\vi}{\w}= T^*\left(\bigoplus_{e\in Q}\Hom(V_{(t(e))}, V_{(h(e))})\oplus \bigoplus_{i\in I}\Hom(V_i,W_i)\right),
\]
i.e. to the cotangent bundle of
\[
\Repn{Q}{\vi}{\w}:=\bigoplus_{e\in Q}\Hom(V_{(t(e))}, V_{(h(e))})\oplus \bigoplus_{i\in I}\Hom(V_i,W_i).
\]
From now on, we identify $\text{Rep}_{\overline Q}{(\vi,\w)}$ with $T^*\Repn{Q}{\vi}{\w}$.  We equip the vector space $T^*\Repn{Q}{\vi}{\w}$ with the structure of a symplectic vector space, with symplectic form induced by the canonical one form of the cotangent bundle $T^*\Repn{Q}{\vi}{\w}$.

\subsection{Group actions}

On $T^*\Repn{Q}{\vi}{\w}$ many actions can be defined. Firstly, the groups
\[
G_{\vi}:=\prod_{i\in I}\GL(\vi_i) \qquad G_{\w}:=\prod_{i\in I}\GL(\w_i)
\]
act on $\Repn{Q}{\vi}{\w}$ by conjugation and hence also on the linear cotangent bundle $T^*\Repn{Q}{\vi}{\w}$. Whenever two edges are connected by more than one arrow, there is a nontrivial\footnote{nontrivial in the sense that it does not factor through the action of $G_{\vi}$} action of the group
\[
G_{\text{par}}=\prod_{i\neq j}GL(Q_{ij}),
\]
whose restriction to 
\[
\bigoplus_{\substack{e\in Q \\ t(e)=i,h(e)=j}}\Hom(V_t(e),V_{h(e)})\cong \Hom(V_t(e),V_{h(e)})\otimes \Ci^{Q_{ij}}
\]
 is given by the standard action of $GL(Q_{ij})$ on $\Ci^{Q_{ij}}$.
 Similarly, there is an action of 
 \[
 G_{\text{sympl}}:=\prod_{i\in I}Sp(2Q_{ii})
 \]
 whose restriction to 
\[
T^*\left(\bigoplus_{\substack{e\in Q \\ t(e)=h(e)=i}}\Hom(V_t(e),V_{h(e)})\right)\cong\Hom(V_i,V_i)\otimes T^*\Ci^{Q_{ii}}
\]
is given by the standard action of $Sp(2Q_{ii})$ on $T^*\Ci^{Q_{ii}}$.

A direct check shows that all these actions preserve the symplectic form of $T^*\Repn{Q}{\vi}{\w}$.
Finally, we will also consider the action of the one dimensional torus $\Ci_h^\times$ rescaling the cotangent directions with weight $h^{-1}$. The subscript, which is standard in the literature, simply refers to the letter labelling the coordinate of $\Ci^\times$.

%As a final remark, we observe that the action of 
%\[
%G_\text{frame}:=G_\text{par}\times G_\text{sympl}\times \Ci^\times_h
%\]
%does not depend on the framing vectors $\vi$ and $\w$, while the actions of $G_{\vi}$ and $G_{\w}$ do.

\subsection{Maximal tori}
Let $T$ be a maximal torus of $G_\w\times G_{\text{par}}\times G_{\text{sympl}}\times \Ci^\times_h$. We write
\[
T=A_\w\times B\times \Ci^\times_h,
\]
where $A_\w$ is a maximal torus in $\G_\w$ and $B$ is a maximal torus in
$G_\text{par}\times G_\text{sympl}$. As a consequence of the previous observations, it follows that the subtorus $A:=A_\w\times B$ preserves the symplectic form, whereas $\Ci_h^\times $ scales it.

\subsection{Definition of Nakajima varieties}

By $\naka_{Q,\theta}(\vi,\w)$ we denote the Nakajima variety associated to a quiver $(Q,I)$, dimension vectors $\w, \vi\in \N^I$ and stability condition $\theta\in\Z^{I}\cong \text{Char}(G_\vi)$ \cite{ginzburg2009lectures}\cite{Nakajimainstantons}\cite{Nakajimaquiver}\cite{Nakajima_2001}.
Namely, $\naka_{Q,\theta}(\vi,\w)$ is realized as the GIT symplectic reduction of the space of representations $T^*\Repn{Q}{\vi}{\w}$ by the Hamiltonian action of $G_{\vi}$:
\[
\naka_{Q,\theta}(\vi,\w):=T^*\Repn{Q}{\vi}{\w}////^\theta G_{\vi}.
\]
Equivalently, $\naka_{Q,\theta}(\vi,\w)$ is the quotient 
\[
\mu^{-1}(0)^{\theta-ss}/G_\vi,
\]
where $\mu:T^*\Repn{Q}{\vi}{\w}\to \g_\vi^*$ is the moment map of the $G_{\vi}$-action and $\mu^{-1}(0)^{\theta-ss}$ is the open set of $\theta$-semistable representations in $\mu^{-1}(0)$.

Since the actions of $\Ci_h^\times$, $G_{\text{par}}$, $G_{\text{sympl}}$ and $G_{\w}$ commute with the action of $G_{\vi}$, they descend to actions on $\naka_{Q,\theta}(\vi,\w)$. In particular, the subgroup $\Ci^\times_{h}$ rescales the symplectic form while all the other actions preserve it. 

In the rest of these notes, we will be interested in the induced action of a subtorus of the maximal torus $T$.

\subsection{Torus-fixed points}
A striking feature of Nakajima varieties is that the fixed components of the action of a subtorus of $A$ are still quiver varieties \cite{maulik2012quantum}. For our interests it will suffice to have an explicit formula for the $A_\w$-fixed components, which follows inductively from the following proposition \cite[Section 2.4.1]{maulik2012quantum}:
\begin{propn}
Let $\Ci^\times$ be a one dimensional subtorus of $A_\w$ acting with weight one on the subspaces $W'_i\subset W_i$, and trivially on their complements $W''_i$. Then
\begin{equation}
    \label{decomposition fixed points}
    \naka_Q(\vi,\w)^{\Ci^\times}\cong\bigsqcup_{\vi'+\vi''=\vi}\naka_Q(\vi',\w')\times \naka_Q(\vi'',\w''),
\end{equation}
where $\w'$ and $\w''$ are the dimension vectors of $W'$ and $W''$ and the isomorphism is induced by the diagonal inclusion $\Repn{Q}{\vi'}{\w'}\times\Repn{Q}{\vi''}{\w''}\hookrightarrow \Repn{Q}{\vi}{\w}$.
\end{propn}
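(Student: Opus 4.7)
The strategy is to classify $\Ci^\times$-fixed points in the symplectic reduction by lifting the torus action to the underlying framed representation and analyzing the resulting weight decomposition.

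First, I would invoke the standard principle that a point $[x]\in \naka_Q(\vi,\w)$ is $\Ci^\times$-fixed if and only if there exists a one-parameter subgroup $\rho:\Ci^\times\to G_\vi$ such that, after twisting the $\Ci^\times$-action on the representative $x\in T^*\Repn{Q}{\vi}{\w}$ by $\rho$, the point $x$ becomes genuinely fixed. This is because the $\Ci^\times$-orbit of $[x]$ must coincide with a $G_\vi$-orbit, and a reductive group acts with closed orbits on semistable loci. Such a $\rho$ provides a weight decomposition $V_i=\bigoplus_k V_i^{(k)}$ for every $i\in I$.

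Second, I would use equivariance to constrain the maps. The arrow maps $x_e\in \Hom(V_{t(e)},V_{h(e)})$ and their cotangent counterparts, carrying $\Ci^\times$-weight zero, must preserve the grading, hence decompose into blocks $V_{t(e)}^{(k)}\to V_{h(e)}^{(k)}$. On the framing side, since $W_i=W'_i\oplus W''_i$ carries $\Ci^\times$-weights $1$ and $0$ respectively, the framing maps can only couple $V_i^{(1)}$ with $W'_i$ and $V_i^{(0)}$ with $W''_i$; any other weight space $V_i^{(k)}$ with $k\notin\{0,1\}$ would be completely disconnected from the framing. The $\theta$-stability condition then forces such spaces to vanish, since otherwise they would provide a subrepresentation violating the generation condition built into $\theta$. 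After a canonical normalization of $\rho$ (shifting weights so that the occupied ones are exactly $0$ and $1$), one obtains a genuine decomposition $V_i=V'_i\oplus V''_i$ with $V'_i:=V_i^{(1)}$ and $V''_i:=V_i^{(0)}$, determining uniquely dimension vectors $\vi',\vi''$ with $\vi'+\vi''=\vi$.

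Third, I would check that this decomposition realizes the product structure on the GIT quotient. By construction, each arrow map splits as a pair $(x'_e,x''_e)$, the framing maps split into $V'_i\leftrightarrows W'_i$ and $V''_i\leftrightarrows W''_i$, and the moment map equation decouples. The stabilizer of the grading inside $G_\vi$ is $G_{\vi'}\times G_{\vi''}$, and the residual GIT quotient with respect to this subgroup gives exactly $\naka_Q(\vi',\w')\times \naka_Q(\vi'',\w'')$. Conversely, the diagonal inclusion $\Repn{Q}{\vi'}{\w'}\times \Repn{Q}{\vi''}{\w''}\hookrightarrow \Repn{Q}{\vi}{\w}$ (using $W=W'\oplus W''$) produces $\Ci^\times$-fixed representations, and the two constructions are mutually inverse on equivalence classes.

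The main obstacle I expect is verifying that the stability of a fixed representation of $\naka_Q(\vi,\w)$ descends exactly to stabilities on the two factors, and that the choice of lift $\rho$ does not affect the decomposition: different lifts differ by a character of $\Ci^\times$ valued in the center of $G_\vi$, but the normalization ``occupied weights equal to $0$ and $1$'' pins $\rho$ down. This yields a well-defined map on fixed points and confirms that different pairs $(\vi',\vi'')$ index disjoint components, since the dimension vectors are locally constant on the fixed locus. Combining these steps proves the displayed decomposition \eqref{decomposition fixed points}.
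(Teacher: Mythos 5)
The paper does not prove this statement; it is quoted directly from Maulik--Okounkov, Section 2.4.1, and used as a black box, so there is no ``paper proof'' to compare against. Your reconstruction is essentially the standard argument from that reference and is correct in outline: pass to the prequotient and produce a compensating cocharacter $\rho$ (which exists by freeness of the $G_\vi$-action on $\mu^{-1}(0)^{\theta\text{-ss}}$), observe that the compensated $\Ci^\times$-action forces the arrow maps to preserve the $\rho$-weight grading and the framing maps to couple equal weights of $V$ and $W$, invoke $\theta$-stability to eliminate the weight spaces that do not meet the framing, and read off the product structure after quotienting by the grading stabilizer $G_{\vi'}\times G_{\vi''}$.

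Two points are worth tightening. First, since $G_\vi$ acts freely on the $\theta$-stable locus, the compensating cocharacter is in fact \emph{unique} once a representative in $\mu^{-1}(0)^{\theta\text{-ss}}$ is fixed; there is no family of lifts differing by a central character, only the ambiguity of conjugating $\rho$ when one changes the representative within its $G_\vi$-orbit, and that does not alter the weight multiset. The ``occupied weights are $\{0,1\}$'' statement is therefore a consequence of the stability argument (as $\operatorname{im}(j)$ lands in weights $0,1$ and its $\theta$-closure must be all of $V$), not a normalization choice. Second, the claim that ``the residual GIT quotient gives exactly $\naka_Q(\vi',\w')\times\naka_Q(\vi'',\w'')$'' silently uses that $\theta$-semistability on $\mu^{-1}(0)$ restricts to, and is implied by, the product of semistabilities on $\mu_{\vi'}^{-1}(0)\times\mu_{\vi''}^{-1}(0)$; this needs an argument via King's criterion, of exactly the kind the paper writes out when proving the embedding \eqref{diagonal embedding inclusion}, and should not be taken for granted. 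Neither issue is fatal, but both deserve explicit treatment.
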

By iteration, one gets a decomposition of $\naka_Q(\vi,\w)^{A_\w}$ as a product of Nakajima varieties of type $Q$.

\begin{eg}
\label{fixed points Grassmannians}
Let $Q=A_1$, $\vi=(v)$, $\w=(w)$ such that $v<w$ and $\theta>0$. The Nakajima variety $\naka_{Q,\theta}(\vi,\w)$ is isomorphic to the cotangent bundle $T^*\Gr{v}{w}$ of the Grassmannian $\Gr{v}{w}$ and the action of $A_\w=(\Ci^\times)^w$ on $T^*\Gr{v}{w}$ seen as a Nakajima variety coincides with its canonical action on $T^*\Gr{v}{w}$. The fixed components
\[
\naka_{A_1}(v^{(1)},1)\times \dots \times \naka_{A_1}(v^{(w)},1)\subset T^*\Gr{v}{w}
\]
are labelled by strings of positive integers $(v^{(1)},\dots,v^{(w)})$ such that $\sum_j v^{(j)}=v$ and $v^{(j)}\in \{0,1\}$ for $j\in \lbrace 1,\dots,w\rbrace$. As a consequence, fixed points in $T^*\Gr{v}{w}$ are labelled by subsets $\lbrace j_1,\dots, j_v\rbrace \subset \lbrace 1,\dots,w\rbrace$ corresponding to those $v^{(j_i)}$ that are nonzero.

It is easy to check that the fixed point labelled by $\lbrace j_1,\dots, j_v\rbrace$ corresponds to the coordinate plane $\chi_{a_{j_1}}\oplus,\dots\oplus\chi_{a_{j_v}}\subset W=\Ci^{w}$ in the zero section $\Gr{v}{w}$ of $T^*\Gr{v}{w}$.
\end{eg}
\begin{eg}
\label{Jordan quiver}
Consider the Jordan quiver, i.e. the quiver with one vertex and one loop. Given two positive integers $v,w\in \N$, the representation space of the framed doubled quiver is the vector space 
\begin{align*}
    T^*\Repn{Q}{\vi}{\w}&=T^*\left(\Hom(V,V)\oplus \Hom(V,W)\right)\\
    &=\Hom(V,V)\oplus \Hom(V,V)\oplus \Hom(V,W)\oplus \Hom(W,V),
\end{align*}
where $V=\Ci^v$ and $W=\Ci^w$, so that a point in $T^*\Repn{Q}{\vi}{\w}$ consists of a tuple $(x,y,i,j)$ that can be represented as follows:
\begin{equation*}
    \begin{tikzcd}
    V\arrow[loop left, distance=3em, start anchor={[yshift=-1ex]west}, end anchor={[yshift=1ex]west}, "x"]\arrow[loop right, distance=3em, start anchor={[yshift=1ex]east}, end anchor={[yshift=-1ex]east}, "y"]\arrow[d, shift left,  "i"] \\
    W\arrow[u, shift left, "j"]
    \end{tikzcd}
\end{equation*}
The moment map is given by 
\[
\mu:T^*\Repn{Q}{\vi}{\w}\to \mathfrak{gl}(v)\cong\mathfrak{gl}(v)^* , \qquad (x,y,i,j)\mapsto [x,y]+ji,
\]
and for a choice of character $\theta\in \Char(GL(V))$, $\theta(g)=\det(g)^\theta$ such that $\theta<0$, we have \cite{Nakajimaquiver}
\[
\naka_{Q}(v,w)=\left\{ (x,y,i,j)\ \middle\vert \begin{array}{l}
    [x,y]+ji=0,\\
    \text{there is no nontrivial subspace $S\subset V$} \\
    \text{stabilized by $x$ and $y$ and such that $im(j)\subset S$}
  \end{array}\right\}/GL(V).
\]
It is well known that this is the ADHM construction of the moduli space $\mathcal{M}(v,w)$ of framed rank $w$ torsion free sheaves with second Chern class equal to $v$, see \cite{Nakajimalectureshilbert}\cite{AIF_1956__6__1_0}. By ``framed" we mean that this is the moduli space of torsion free sheaves $\mathcal{F}$ equipped with an isomorphism 
\[
\restr{\mathcal{F}}{\Pe^1}\cong \mathscr{O}_{\Pe^1}^w,
\]
where $\Pe^1$ is a prescribed linear subvariety of $\Pe^2$. 
In particular, if $w=1$, this variety is the Hilbert scheme of $v$-points in $\Ci^2$, which we denote by $\Hilb_v$.
There are two natural actions on $\mathcal{M}(v,w)$: an action of $\GL(w)=G_\w$ changing the frame and an action of $\GL(2)$ induced by the action of $\Pe^2$ that restricts to the linear action on $\Ci^2=\Pe^2\setminus \Pe^1$.
The actions of $G_\w$ and $\text{SL}(2)\cong \text{Sp}(2)\subset\GL(2) $ coincide with their actions on $\mathcal{M}(v,w)$ seen as a Nakajiama variety, while the action of the one dimensional subtorus of $\GL(2)$ of matrices of the form 
\[
\begin{pmatrix}
1 & 0\\
0 & h
\end{pmatrix}
\]
coincides with the action of $\Ci^\times_h$.
By equation \eqref{decomposition fixed points} we have 
\[
\mathcal{M}(v,w)^{A_\w}=\bigsqcup_{v^{(1)}+\dots+v^{(w)}=v}\Hilb_{v^{(1)}}\times\dots\times\Hilb_{v^{(w)}}
\]
and hence
\begin{equation}
    \label{fixed set innstantons}
    \mathcal{M}(v,w)^{A_\w\times B}=\bigsqcup_{v^{(1)}+\dots+v^{(w)}=v}\Hilb_{v^{(1)}}^B\times\dots\times\Hilb_{v^{(w)}}^B.
\end{equation}

The fixed-points set $\Hilb_v^B$ of the Hilbert scheme $\Hilb_v$ is finite and consists of a union of zero dimensional Nakajima varieties for the quiver $A_\infty$ labelled by partitions $\lambda=(\lambda_1,\dots,\lambda_l)$ of $v$, see \cite{smirnov2018elliptic} or \cite{smirnov2013instanton}. It follows that also the set $\mathcal{M}(v,w)^{A\times B}$ is finite and its points are labelled by tuples $(\lambda^{(1)},\dots,\lambda^{(w)})$ where each  $\lambda^{(j)}$ is a partition of $v^{(j)}$ and $v^{(1)}+\dots+v^{(w)}=v$. Moreover, the tuple $(v^{(1)},\dots,v^{(w)})$ determines the $A_\w$-fixed component
\[
\Hilb_{v^{(1)}}\times\dots\times\Hilb_{v^{(w)}}\subset \mathcal{M}(v,w)
\]
where the fixed points labelled by those tuples $(\lambda^{(1)},\dots,\lambda^{(w)})$ such that $\sum_m\lambda_m^{(j)}=v^{(j)}$ for all $j$ lie. In the following, we identify the $A_\w\times B$-fixed points of $\mathcal{M}(v,w)$ with such tuples of partitions. 

\end{eg}
%The action of the maximal torus $A_\w\subset G_\w=\prod_{i\in I}\GL(W_i)$ on the representation space $T^*\Repn{Q}{\vi}{\w}$ commutes with the one of $G_\vi$, and hence descends to an algebraic action on $X$. Additionally, the action of the one dimensional torus $\Ci_h^\times$ acting on the fibers of $T^*\Repn{Q}{\vi}{\w}$ descends to an action on $\naka_Q(\vi,\w)$ rescaling the symplectic form with weight $-1$.

\subsection{Topology of Nakajima varieties}
A representation $U$ of $G_\vi$ allows to define a vector bundle $\mathscr{U}$ over $X$ as follows
\begin{equation}
\label{tautological bundles naka}
    \mathscr{U}=\mu^{-1}(0)^{\theta-ss}\times U/G_{\vi}.
\end{equation}
In particular, this gives tautological bundles $\mathscr{V}_i$ associated to the vector spaces $V_i$.
Additionally, we define the topologically trivial $A$-equivariant vector bundles
\[
\W_i=\naka_{Q,\theta}(\vi,\w)\times W_i
\]
associated to the framing vector spaces $W_i$.

In this paper we consider in the $T$-equivariant elliptic cohomology of Nakajima varieties associated to an elliptic curve $E=\Ci^\times/q^{\Z}$, cf. \cite{Ganter_2014}\cite{ginzburg1995elliptic} for a comprehensive description, and \cite[Appendix A]{okounkov2020inductiveI} for a review in the context of stable envelopes of Nakajima varieties, whose notation  we closely follow.
We will be only interested in its $0$-th (covariant) functor
\[
E_T(-):T\text{-spaces}\to \text{Schemes}_{E_T},
\]
where
$E_T:=E_T(\pt)$ is the cohomology of the one point space.
For convenience, we will denote the functorial morphism $E_T(f):E_T(X)\to E_T(Y)$ associated to a map $f:X\to Y$ simply by $f$. 
A $T$-equivariant complex vector bundle $V\to X$ of rank $r$ defines a map 
\[
c(V):E_T(X)\to E^{(r)}
\]
called elliptic characteristic class of $V$, see \cite[Section (1.8)]{ginzburg1995elliptic} and \cite[Section 5]{Ganter_2014}. Its coordinates on the target, which is the symmetrized $r$-th cartesian power of $E$, are called elliptic Chern roots.
Associated to $V$ is an invertible sheaf, called Thom sheaf of $V$
\[
\Theta(V):=c^*D_V,
\]
where $D_V$ is the divisor $D_V=\lbrace0\rbrace+E^{(r-1)}\subset E^{(r)}$, formed by those tuples $(t_1,\dots, t_r)\in E^{(r)}$ that contain $0$. Since the divisor $D_V$ is effective, the line bundle $\Theta(V)$ admits a canonical global section, which following the literature we denote in the same way. Since $\Theta(V\oplus V')=\Theta(V)\otimes\Theta(V')$, the assignment $V\mapsto \Theta(V)$ extends to a group homomorphism
\[
K_T(X)\mapsto \Pic{}{E_T(X)}.
\]
If $f$ is a proper morphism of smooth varieties, we can also define its pushforward
\[
f!:f_*\Theta(-N_f)\to \mathcal{O}_{E_T(Y)},
\]
where $\Theta(-N_f)=\Theta(N_f)^{-1}$ is the inverse of the Thom sheaf of the normal bundle of $f$.  

The following elliptic version of Kirwan surjectivity \cite{McGerty_2017}, which in  singular cohomology essentially states that the ring $H^*(\naka_Q(\vi,\w))$ is generated as a $H^*_T(\pt)$-algebra by the Chern roots of the tautological line bundles, was proved in \cite[Section 2.5.2]{aganagic2016elliptic}:
\begin{thm}
The elliptic characteristic classes associated to the tautological bundles \eqref{tautological bundles naka}
\[
c(\mathscr{V}_i):E_T(\naka_{Q}(\vi,\w))\to E^{(\vi_i)}
\]
induce a morphism
\begin{equation}
\label{char emb}
    \chi_{c}:E_T(\naka_Q(\vi,\w))\hookrightarrow E_T\times\prod_{i\in I}E^{(\vi_i)}
\end{equation}
that is an embedding.
\end{thm}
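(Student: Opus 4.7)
The plan is to reduce the elliptic statement to the classical (K-theoretic and/or cohomological) Kirwan surjectivity for Nakajima varieties, due to McGerty--Nevins, and then use the functorial construction of $E_T(-)$ to upgrade ring-level surjectivity to a scheme-theoretic closed embedding.

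First I would identify the target abstractly. The space $T^*\Repn{Q}{\vi}{\w}$ is a $(T\times G_\vi)$-equivariant vector space, hence equivariantly contractible, so
\[
E_{T\times G_\vi}\bigl(T^*\Repn{Q}{\vi}{\w}\bigr)\cong E_{T\times G_\vi}(\pt)\cong E_T\times \prod_{i\in I}E^{(\vi_i)},
\]
where the second isomorphism uses that $E_G(\pt)$ for $G=\prod_i\GL(\vi_i)$ is computed as the Weyl-invariants of $E_{S}(\pt)$ with $S\subset G_\vi$ a maximal torus, which gives exactly $\prod_i E^{(\vi_i)}$. Next, the inclusion $\iota:\mu^{-1}(0)^{\theta\text{-}ss}\hookrightarrow T^*\Repn{Q}{\vi}{\w}$ and the principal $G_\vi$-bundle $q:\mu^{-1}(0)^{\theta\text{-}ss}\to \naka_Q(\vi,\w)$ induce
\[
E_T\bigl(\naka_Q(\vi,\w)\bigr)\xleftarrow{\;q\;} E_{T\times G_\vi}\bigl(\mu^{-1}(0)^{\theta\text{-}ss}\bigr)\xrightarrow{\;\iota\;}E_T\times \prod_{i\in I}E^{(\vi_i)},
\]
with $q$ an isomorphism. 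Composing with $q^{-1}$ produces a map whose $i$-th coordinate is, by construction, the elliptic characteristic class $c(\V_i)$; this is the map $\chi_c$.

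Second, I would show that $\chi_c$ is a closed embedding by working locally on the target. Recall that the elliptic cohomology scheme is built by gluing local affine pieces which, near a given closed point $p\in E_T\times\prod_iE^{(\vi_i)}$, are isomorphic to formal neighbourhoods in either equivariant cohomology or equivariant K-theory of the $A$-fixed locus, for an appropriate rank-one subtorus $A\subset T$ reading off $p$ (this is the Grojnowski-style presentation reviewed in \cite[Appendix A]{okounkov2020inductiveI}). On such a chart the problem reduces to showing that the corresponding map on cohomological/K-theoretic rings is surjective. By the localization theorem this in turn reduces to Kirwan surjectivity applied to each fixed component; since by \eqref{decomposition fixed points} the $A$-fixed components of $\naka_Q(\vi,\w)$ are themselves (products of) Nakajima varieties, and Kirwan surjectivity for these is a theorem of McGerty--Nevins \cite{McGerty_2017}, one obtains the surjectivity on every local chart.

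Finally, the closed-embedding property globalizes because the target and source are both built by gluing the same family of local models along compatible transition data, and closed embeddings satisfy fpqc descent. The main obstacle is precisely this last bookkeeping step: one must verify that the local Kirwan-type surjections assemble into a genuine scheme-theoretic closed embedding, which requires matching the Thom-sheaf coordinates on overlapping charts with the formal neighbourhoods of the fixed loci prescribed by the Grojnowski construction. This compatibility is essentially the statement that the tautological bundles $\V_i$ restrict, on each fixed component, to the tautological bundles of the smaller Nakajima varieties appearing in \eqref{decomposition fixed points}, which follows directly from the functorial definition of the $\V_i$ in \eqref{tautological bundles naka}.
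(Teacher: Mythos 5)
The paper does not prove this theorem: it is cited from \cite[Section 2.5.2]{aganagic2016elliptic}, with \cite{McGerty_2017} referenced for the underlying non-elliptic Kirwan surjectivity, so there is no ``paper proof'' to compare against. Your overall strategy is nonetheless of the same flavour as the cited source and is the right one: identify the target as $E_{T\times G_\vi}(\pt)$ via equivariant contractibility of the prequotient, build $\chi_c$ through the reduction isomorphism $E_{T\times G_\vi}(\mu^{-1}(0)^{\theta\text{-}ss})\cong E_T(\naka_Q(\vi,\w))$, and then verify the closed-embedding property locally on the elliptic-cohomology scheme by comparison with classical localization. (Incidentally, being a closed embedding is Zariski-local on the target, so the appeal to fpqc descent at the end is unnecessary overhead.)

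The localization step, however, is not stated correctly, and as written it does not cover all the local charts. Near a closed point $a\in E_T$, the local model of $E_T(X)$ is governed not by a \emph{rank-one} subtorus but by the minimal algebraic subgroup $T(a)\subseteq T$ with $a\in E_{T(a)}$, whose rank ranges from $0$ (at $a=0$, where the chart is the completion of $H^{*}(X)$) up to $\dim T$ (at generic $a$, where $X^{T(a)}=X^{T}$ is a finite set). Accordingly, your reduction to ``Kirwan surjectivity on each fixed component'' via \eqref{decomposition fixed points} and \cite{McGerty_2017} applies only when $T(a)\subseteq A$; in that case the fixed loci are indeed Nakajima varieties to which $\V_i$ restricts as a sum of their tautological bundles twisted by characters. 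But when $T(a)$ involves $\Ci^{\times}_{h}$, the fixed locus is not a Nakajima variety at all (it is a union of Lagrangian subvarieties, e.g.\ the zero section of $T^{*}\Gr{v}{w}$), and the required surjectivity there has to be deduced from Kirwan surjectivity for $X$ via a retraction onto the core. And at generic $a$, where $X^{T(a)}$ is finite, there is no nontrivial Kirwan surjectivity to invoke; what is needed instead is that the $T$-characters appearing in the fibres of the $\V_i$ over the fixed points separate those points, an elementary but logically distinct verification. Your sketch conflates these cases, so the single blanket step ``reduce to McGerty--Nevins on the fixed strata'' does not go through uniformly, although each of the gaps it hides is repairable.
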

%In particular, this means that the structure sheaf $\mathcal{O}_{E_T(\naka_Q(\vi,\w))}$ is generated as an $\mathcal{O}_{E_T}$-algebra by the elliptic Chern roots $t^{i}_j$, $i\in I$, $j={1,\dots, \vi_i}$, which are the coordinate functions on $E^{\vi_i}$.

The equivariant Picard group of a Nakajima variety admits a satisfactory description:
\begin{propn}[\cite{Knop_Picard}]
Let $X$ be a Nakajima variety. The following holds:
\begin{enumerate}
    \item The Picard group $\Pic{}{X}$ is a finitely generated free abelian group generated by the determinant line bundles $\det \V_i$ of the tautological bundles $\V_i$ on $X$.
    \item The equivariant Picard group $\Pic{T}{X}$ fits in the following split exact sequence of abelian groups
\[
\begin{tikzcd}
0\arrow[r] & \Char(T)\arrow[r] &  \Pic{T}{X}\arrow[r] & \Pic{}{X}\arrow[r] &0.
\end{tikzcd}
\]
In particular, it is a finitely generated free abelian group.
In the sequence, the first nontrivial map sends a character to the corresponding topologically trivial line bundle and the last one forgets the equivariant structure.
\end{enumerate}

\end{propn}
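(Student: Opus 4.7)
The plan is to realize $X=\naka_{Q,\theta}(\vi,\w)$ as a free GIT quotient of an affine space and to run equivariant descent. For generic $\theta$, the semistable locus $Y^{ss}\subset Y:=T^{*}\Repn{Q}{\vi}{\w}$ coincides with the stable locus, so $G_{\vi}$ acts freely on $Y^{ss}$ and the projection $\pi:Y^{ss}\to X$ is a principal $G_{\vi}$-bundle. Equivariant descent along $\pi$ then gives natural isomorphisms $\Pic{}{X}\cong \Pic{G_{\vi}}{Y^{ss}}$ and $\Pic{T}{X}\cong \Pic{T\times G_{\vi}}{Y^{ss}}$, reducing the problem to computing equivariant Picard groups on the open subset $Y^{ss}$ of an affine space.

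For part $(1)$, since $Y$ is an affine space its non-equivariant Picard group vanishes, so every $G_{\vi}$-equivariant line bundle on $Y$ is trivial as a line bundle and its equivariant structure is the datum of a character of $G_{\vi}$. This identifies $\Pic{G_{\vi}}{Y}$ with $\Char(G_{\vi})=\bigoplus_{i\in I}\Z\cdot\det_{i}$, generated by the determinant characters of the factors $\GL(\vi_{i})$; under descent, $\det_{i}$ corresponds to the line bundle $\det\V_{i}$ on $X$. The restriction $\Pic{G_{\vi}}{Y}\to \Pic{G_{\vi}}{Y^{ss}}$ sits in a standard exact sequence whose left tail records $G_{\vi}$-semi-invariant regular units on $Y^{ss}$ and whose right tail is $\Pic{}{Y^{ss}}$. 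Proving that both vanish identifies $\Char(G_{\vi})$ with $\Pic{G_{\vi}}{Y^{ss}}\cong \Pic{}{X}$, which establishes part $(1)$.

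For part $(2)$, I would rerun the analogous computation for $T\times G_{\vi}$. Since $T$ also acts on the affine space $Y$, one has $\Pic{T\times G_{\vi}}{Y}=\Char(T\times G_{\vi})=\Char(T)\oplus\Char(G_{\vi})$, and the forgetful map $\Pic{T}{X}\to\Pic{}{X}$ corresponds, after descent, to the projection onto the $\Char(G_{\vi})$-summand: its kernel is $\Char(T)$, the group of $T$-equivariant structures on the trivial line bundle. This yields the short exact sequence in the statement. A splitting is supplied by the canonical $T$-equivariant structure inherited by each $\det\V_{i}$ from the $T$-action on $Y^{ss}$, which defines a section $\Pic{}{X}\to\Pic{T}{X}$ of the forgetful map. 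Finite generation and freeness of $\Pic{T}{X}$ then follow from the corresponding properties of $\Char(T)\cong\Z^{\dim T}$ and $\Pic{}{X}\cong\Z^{|I|}$ together with the splitting.

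The main obstacle is proving the two vanishings used in part $(1)$: that there are no non-constant $G_{\vi}$-semi-invariant regular units on $Y^{ss}$, and that $\Pic{}{Y^{ss}}=0$. Both statements rest on a careful analysis of the complement $Y\setminus Y^{ss}$ inside the affine space $Y$ and exploit the genericity of the stability parameter $\theta$; they are the substantive content of the theorem of \cite{Knop_Picard}. The remainder of the plan is formal equivariant descent for a principal bundle.
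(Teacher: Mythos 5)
The paper states this proposition as a citation to \cite{Knop_Picard} without giving its own proof, so I compare your outline against the statement and the remark the paper places right after it.

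Your descent framework is sound: for generic $\theta$, the projection $\pi:Y^{ss}\to X$ is indeed a principal $G_{\vi}$-bundle, and descent identifies $\Pic{}{X}\cong\Pic{G_{\vi}}{Y^{ss}}$ and $\Pic{T}{X}\cong\Pic{T\times G_{\vi}}{Y^{ss}}$. The identification $\Pic{G_{\vi}}{Y}\cong\Char(G_{\vi})$ on the affine space is also correct.

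The gap is in the next step: you aim to show that the restriction $\Pic{G_{\vi}}{Y}\to\Pic{G_{\vi}}{Y^{ss}}$ is an isomorphism by proving two vanishings, concluding $\Pic{}{X}\cong\Char(G_{\vi})\cong\Z^{|I|}$. This conclusion is false in general, and the paper's own remark immediately after the proposition exhibits a counterexample: for the $A_2$ quiver with $\vi=(1,1)$, $\w=(2,0)$, the variety is $T^{*}\mathbb{P}^{1}$ with $\Pic{}{X}\cong\Z$, while $|I|=2$; the two determinant bundles $\det\V_1$ and $\det\V_2$ are isomorphic. The restriction $\Pic{G_{\vi}}{Y}\to\Pic{G_{\vi}}{Y^{ss}}$ is surjective on a smooth $Y$ (every divisor on the open set extends), but its \emph{kernel} is governed not by units on $Y^{ss}$ but by $G_{\vi}$-invariant codimension-one components of the unstable locus $Y\setminus Y^{ss}$. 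When such components exist (as they do in the $T^{*}\mathbb{P}^{1}$ example), the determinants satisfy nontrivial relations, so $\Pic{}{X}$ is a proper quotient of $\Char(G_{\vi})$, not isomorphic to it. Your plan would therefore prove too much. The correct conclusion from descent is only that $\Char(G_{\vi})\to\Pic{}{X}$ is surjective, which gives the ``generated by the $\det\V_i$'' part of the statement; freeness of the quotient (i.e.\ absence of torsion) requires a separate argument that your outline does not supply. Part $(2)$ has the same issue downstream, since it relies on the incorrect identification $\Pic{}{X}\cong\Z^{|I|}$, though the existence of the split short exact sequence itself is fine once one knows $\Pic{}{X}$ is free.
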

\begin{remq}
Even though there is a tautological line bundle for every vertex $i\in I$ of the quiver, the previous theorem does not say that the rank of $\Pic{}{X}$ is $|I|$. This is because two determinant line bundles $\V_i$ and $\V_{j}$ might be isomorphic. This always happens when $\dim\naka_Q(\vi,\w)=0$, but it is not the only case. Indeed, consider the quiver $A_2$ with dimension vectors $w=(2,0)$ and $v=(1,1)$ and stability parameter $\theta=(\theta_1,\theta_2)$, with $\theta_1, \ \theta_2>0$.
\begin{equation*}
    \begin{tikzcd}[row sep=large, column sep = large]
1\arrow[d,bend left, "i"] \arrow[r,bend left, "y"] & 1   \arrow[l, bend left, "x"] \\ 
2\arrow[u,bend left, "j"]
\end{tikzcd}
\end{equation*}
It is well known that in this case $\naka_{A_2,\theta}(\vi,\w)=T^*\mathbb{P}^1$, and hence $\Pic{}{\naka_{A_2,\theta}(\vi,\w)}=\Z$, even though there are two tautological line bundles $\det\V_1=\V_1$ and $\det\V_2=\V_2$. This is because they are isomorphic. To see this, notice that for every point $[i,j, x,y]\in \naka_{A_2,\theta}(\vi,\w)$ the map $x$ is an isomorphism (because of the stability condition) and hence the morphism of line bundles 
\[
\V_2\ni [i,j,x,y,v]\mapsto [i,j,x,y,x(v)]\in \V_1
\]
is an isomorphism.
%In conclusion, the previous theorem says that the rank of $\Pic{}{X}$ equals the number of non isomorphic tautological line bundles $\det\V_i,\ i\in I$. 
\end{remq}
\begin{eg}
\label{picard JOrdan quiver}
The construction of the instanton moduli space $\mathcal{M}(v,w)$ (and in particular of the Hilbert scheme $\Hilb_v=\mathcal{M}(v,1)$) as a quiver variety associated to the Jordan quiver shows that its Picard group $\Pic{}{\mathcal{M}(v,w)}$ is freely generated by the class $\det \V$, where $\V$ is the tautological bundle associated to the unique vertex of the Jordan quiver. Since this bundle is nontrivial, it follows that
\[
\Pic{}{\mathcal{M}(v,w)}\cong \Z.
\]
\end{eg}

\subsection{Chambers and Attracting sets}
Let $X$ be a smooth quasi-projective symplectic variety with an action a torus $T$ rescaling the symplectic form with weight $h^{-1}$ and let $A=\ker(h)$. In this work we will be interested in the case when $X$ is a Nakajima variety or one of the varieties that we will associate to it in the next section.

\begin{defn}[\cite{maulik2012quantum}]
A chamber $\mathfrak{C}$ is a connected component of $\text{Cochar}(A)\otimes_\Z \R\setminus \Delta$, where $\Delta$ is the hyperplane arrangement determined by the $A$-weights of the normal bundles of $X^A$.

We say that a weight $\chi\in \Char(A)$ is attracting (resp. repelling) with respect to $\mathfrak{C}$ if $\lim_{t\to 0} \chi\circ \sigma (t)=0$ (resp. if $\lim_{t\to 0} \chi\circ \sigma (t)=\infty$) for any $\sigma\in \mathfrak{C}$. The trivial character is called the $A$-fixed character.
\end{defn} 

Restricted to $F$, any $A$-equivariant $K$-theoretic class $V$ of $X$ decomposes according to the characters of $A$ in attracting, $A$-fixed and repelling classes
\[
\restr{V}{F}=\restr{V}{F,>}+\restr{V}{F,0}+\restr{V}{F,<}.
\]
For instance, we have that 
\[
\restr{TX}{F}=\restr{TX}{F,>}+TF+\restr{TX}{F,<}.
\]
If $X$ is a Nakajima variety and $A=A_\w$, these geometrically-defined chambers coincide with the Lie-theoretic chambers.
Indeed, the $A_\w$-weights of the normal bundles of the $A_\w$-fixed components of a Nakajima variety $X$ are the roots of the group $\GL(\sum_j \w_j)$, and hence the hyperplane arrangement $\Delta$ coincides with the one that defines the Lie-theoretic chambers of a maximal torus of $\GL(\sum_j \w_j)$.

Let $F$ be a fixed component in $X^{A}$. The attracting set 
\[
\Att{C}{F}=\Set{x\in X}{\lim_{t\to0} \sigma(t)\cdot x\in F, \text{ for some $\sigma \in \mathfrak{C}$}}
\]
is well defined, i.e. independent of the choice of $\sigma\in \mathfrak{C}$, and it is also an affine bundle over $F$. This follows from the standard results in \cite{BBdecomposition}. 

\subsection{Elliptic stable envelopes}

Let $X$ be as above and let $\Epic{T}{X}=\Pic{T}{X}\otimes_{\Z}E$. Elliptic stable envelopes \cite{aganagic2016elliptic} \cite{okounkov2020inductiveI} are morphisms of certain coherent sheaves over the base
\[
\Base{X}{T}:=E_T\times\Epic{T}{X}.
\]
The definition of elliptic stable envelopes requires two choices:
\begin{enumerate}
	\item a polarization $T^{1/2}X$, i.e. a class $T^{1/2}X\in K_T(X)$ such that $ TX=T^{1/2}X+h^{-1}T^{1/2}X^\vee$.
	There is a canonical choice of polarization for a Nakajima variety $X=T^*\Repn{Q}{\vi}{\w}////G_{\vi}$, which is
	\begin{equation}
    \label{standard polarization naka}
    T^{1/2}X=\Repn{Q}{\V}{\W}-\g_{\V},
\end{equation}
	where
	\[
	\g_{\V}=\bigoplus_{i\in I }\Hom(\V_i,\V_i)
	\]
	and
	\[
	\Repn{Q}{\V}{\W}=\bigoplus_{e\in Q}\Hom(\V_{(t(e))}, \V_{(h(e))})\oplus \bigoplus_{i\in I}\Hom(\V_i,\W_i)
	\]
	\item a chamber $\mathfrak{C}\subset\text{Cochar}(A)\otimes_\Z \R$ for the action of the torus $A$.
\end{enumerate}

The polarization $T^{1/2}X$ induces a polarization $T^{1/2}X^{A}$ on the fixed locus $X^{A}$, given by the $A$-fixed component of the restriction $\restr{T^{1/2}X}{X^{A}}$. Moreover, we define the index $\ind$ associated to a polarization $T^{1/2}X$ and a chamber as
\[
\ind:=\restr{T^{1/2}X}{X^A,>}.
\]

Let $s$ be a section of some coherent sheaf over $E_T(X)$, and let $Y\subset X$ be a $T$-invariant subset. We say that $s$ is supported on $Y$ if $f^*s=0$, where
\[
f:E_T(X\setminus Y)\to E_T(X)
\]
is the functorial map associated to the inclusion $X\setminus Y\hookrightarrow X$.

\begin{defn}[\cite{aganagic2016elliptic}]
\label{defn stable envelopes}
    Elliptic stable envelopes are morphisms of sheaves over the base $\Base{T}{X}$
    \[
    \Theta(T^{1/2}X^{A})\otimes \tau(-h\det\ind)^*\mathscr{U}_{X^{A}} \to \Theta(T^{1/2}X)\otimes \mathscr{U}_X\otimes \Theta(h)^{\rk(\ind)},
    \]
    holomorphic in coordinates on $E_A$, meromorphic in coordinates on $\Base{X}{T}/E_A$,
    that satisfy the following two axioms:
    \begin{enumerate}
        \item \label{Condition 1 stable env}If the section $s$ is supported on $F$, then $\text{Stab}_{\mathfrak C}(s)$ is supported on the full attracting set $\text{Att}^f_{\mathfrak{C}}(F)$, which is the minimal closed subset of X that contains $F$ and is closed under taking $\Att{C}{-}$.
        
        \item \label{condition 2 stable env}Let
        \begin{equation*}
            \begin{tikzcd}
            F& \Att{C}{F}\arrow[l, swap, "m"]\arrow[r, "j"]& X\setminus (\text{Att}^f_{\mathfrak{C}}(F)\setminus \Att{C}{F})
            \end{tikzcd}
        \end{equation*}
        denote, respectively, the structure morphism of the affine bundle $m:\Att{C}{F}\to F$ and the inclusion. Let also $i:F\hookrightarrow X$ denote the inclusion of the connected component $F$ in $X$. Then for every section $s$ supported on $F$, we have\footnote{This definition of stable envelopes differs by a sign $(-1)^{\rk(\ind)}$ from the one in \cite{aganagic2016elliptic}. The pushforward $j!$ is well defined because the inclusion $j$ is closed and hence proper.}
        \begin{equation}
            \label{condition 2 stab}
            i^*\circ \text{Stab}(s)=i^*\circ j!\circ m^*(s).
        \end{equation}
    \end{enumerate}
In the definition, $\rk(\ind)$ stands for the rank of the index class $\ind$ and $\Theta(h)$ is the Thom bundle of the topologically trivial line bundle acted on by $\Ci^\times_h$ with weight one.
The universal line bundle $\mathscr{U}_X$ was defined for a general variety in \cite[Section 2.7.1]{aganagic2016elliptic}. For us it will suffice to define it when $X$ is a Nakajima variety and $T=A\times \Ci_h^\times$ is the canonical torus acting on it.
The defining equality $E_T= \Char(T)\otimes_{\Z} E$ induces coordinates $a_l$, $l=1,\dots, \dim(A)$ and $h$ on the cover $\Char(T)\otimes_{\Z}\Ci^\times$ of $E_T$. Similarly, a choice of isomorphism 
\[
\Pic{T}{X}\cong\Char(T)\oplus \Pic{}{X}
\]
induces coordinates  $z_i$, $i\in I$, $z_{a_l}$, $l=1,\dots, \dim(A)$ and $z_h$ on the cover $\Pic{T}{X}\otimes_{\Z} \Ci^\times$ of $\Epic{T}{X}$.
Then the universal line bundle $\mathscr{U}_X$ is the pullback under the characteristic embedding
\[
\chi_c\times id:E_T(X)\times \Epic{T}{X}\hookrightarrow E_T\times\prod_{i\in I}E^{(\vi_i)}\times \Epic{T}{X}
\]
of the line bundle over
\[E_T\times \prod_{i\in I} E^{(\vi_i)}\times \Epic{T}{X}
\]
with the automorphy factor (cf. \cite{oleks2010vector}) of the function
	\begin{equation}
	\label{automorphy universal}
	\prod_{i\in I}\phi(z_i,\det \mathcal{V}_i)\prod_{a=1}^n \phi(z_{a_i}, a_i) \phi(z_h, h),
	\end{equation}
	where \[
	\phi(x,y)=\frac{\theta(xy)}{\theta(x)\theta(y)}.
	\]
	Here $\theta$ is the odd theta function 
	\[
	\theta(x)=(x^{1/2}-x^{-1/2})\prod_{n> 0}(1-q^{n}z)(1-q^{n}x^{-1}).
	\]
\end{defn}
\begin{thm}[\cite{aganagic2016elliptic}, \cite{okounkov2020inductiveI}]
For fixed polarization $T^{1/2}X$ and chamber $\mathfrak{C}$, there exist a unique map that satisfies the conditions of Definition \ref{defn stable envelopes}. 
\end{thm}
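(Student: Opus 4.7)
\emph{Plan.} The statement bundles uniqueness with existence, so I would dispatch uniqueness first and then construct existence by abelianization.

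\emph{Uniqueness.} Suppose $S, S'$ are two morphisms satisfying Definition~\ref{defn stable envelopes} and set $\Delta := S - S'$. Partially order the $A$-fixed components by $F \preceq F' \iff F' \in \text{Att}^f_{\mathfrak{C}}(F)$. For a section $s$ supported on $F$, axiom~(\ref{Condition 1 stable env}) confines $\Delta(s)$ to $\text{Att}^f_{\mathfrak{C}}(F)$ and axiom~(\ref{condition 2 stable env}) forces $i_F^* \Delta(s)=0$. I would then induct downward along $\preceq$: at the inductive step, $i_{F'}^* \Delta(s)$ is a section of an explicit twist of $\Theta(T^{1/2}X)|_{F'}\otimes \mathscr{U}_X\otimes \Theta(h)^{\rk(\ind)}$ on $E_T(F')\times \Base{T}{X}/E_A$. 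The key point is a degree count on the abelian variety $E_A$: the K\"ahler-parameter shift $\tau(-h\det\ind)$ built into the codomain of $S$ is calibrated so that the resulting line bundle has total degree zero, and since the section is already forced to vanish on an open subset of $F'$ by the inductive hypothesis combined with axiom~(\ref{Condition 1 stable env}), it must vanish identically.

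\emph{Existence.} I would follow the two-stage abelianization strategy of \cite{aganagic2016elliptic}. First, construct $\text{Stab}^{X_S}_{\mathfrak{C}}$ for the hypertoric abelianization $X_S = T^*\Repn{Q}{\vi}{\w}////S$, where $S\subset G_\vi$ is a maximal torus, as an explicit product of odd-theta-function quotients indexed by the attracting weights of $T_{X_S}|_F$ at each (now isolated) fixed point. Axiom~(\ref{Condition 1 stable env}) for this formula follows from the zero loci of the theta factors, and axiom~(\ref{condition 2 stable env}) from a direct substitution into \eqref{condition 2 stab}. Next, descend to $X$ via the diagrams in \cite[Section 4]{aganagic2016elliptic} relating $E_T(X_S)$ and $E_T(X)$: take an appropriate Weyl-symmetrized sum, adjust by the difference between the abelian polarization and the canonical Nakajima polarization \eqref{standard polarization naka}, and verify that the two axioms survive. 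Functoriality of $\chi_c$ handles most of axiom~(\ref{Condition 1 stable env}), while axiom~(\ref{condition 2 stable env}) reduces to chasing the decomposition of the fixed locus under $S \subset G_\vi$.

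\emph{Main obstacle.} In both halves the crux is the degree-and-automorphy matching on $E_A$. The K\"ahler shift $\tau(-h\det\ind)$ is exactly what makes the uniqueness degree count tight and what simultaneously forces the abelian formula to descend without residual poles after dividing by the extra theta factors coming from $\g_\V$. I expect essentially all of the technical effort to live in this matching; the remaining ingredients (support statements, attracting-cell geometry, functoriality of elliptic cohomology) are comparatively mechanical.
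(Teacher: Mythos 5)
Your proposal follows essentially the same route as the cited reference \cite{aganagic2016elliptic}: uniqueness via holomorphicity in the $E_A$ directions and degree matching on the abelian variety (with the K\"ahler shift $\tau(-h\det\ind)$ making the automorphy factors agree so the difference becomes a holomorphic section that must vanish), and existence via the two-stage abelianization — explicit theta-function formulas for the hypertoric variety $X_S$, followed by descent through the chain of maps $\Fl\to\widetilde\Fl\to X_S$ and $\Fl\to X$. The paper itself only cites this theorem and its introduction sketches exactly this two-step strategy; a small slip aside (the shift $\tau(-h\det\ind)^*$ sits on the \emph{source} $\mathscr{U}_{X^A}$, not the codomain), your outline is an accurate reconstruction of the cited proof.
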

This map, denoted by $\text{Stab}_\mathfrak{C}$, is called the elliptic stable envelope of $X$.
Since $X^A$ is in general a disjoint union of connected components
\[
X^A=\bigsqcup_j F_j,
\]
the map
$\text{Stab}_\mathfrak{C}$ is equivalent to a collection of maps $\Stab{C}{F_j}$ for every connected component $F_j$.

We end this section recalling the following explicit description of the translate of the universal bundle, shown in \cite[Section 2.8.3]{aganagic2016elliptic}:
\begin{lma}
\label{automorphy translate universal}
	The line bundle
	\[
	\tau(-h\det\ind)^*\mathscr{U}_{X^A}\otimes \mathscr{U}_{X^A}^{-1}
	\]
	coincides with the pullback under the characteristic embedding  $\chi_c\times id$ of the line bundle with automorphy factor given by 
	\begin{equation}
	\phi(h^{-1},\det\ind).
	\end{equation}
\end{lma}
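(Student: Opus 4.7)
The plan is to compute the automorphy factor of the line bundle $\tau(-h\det\ind)^*\mathscr{U}_{X^A}\otimes\mathscr{U}_{X^A}^{-1}$ on the cover of $\Base{T}{X^A}$ afforded by the characteristic embedding $\chi_c\times\mathrm{id}$, and to identify it with the automorphy factor $\phi(h^{-1},\det\ind)$ declared in the statement.

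I would begin by decomposing $\det\ind\in\Pic{T}{X^A}$ using the splitting $\Pic{T}{X^A}\cong\Char(T)\oplus\Pic{}{X^A}$ and the generators $\{\det\V_{i}^{A}\}$ of the non-equivariant factor, writing
\[
\det\ind=h^{r}\cdot\prod_{l}a_{l}^{n_{l}}\cdot\prod_{i}(\det\V_{i}^{A})^{m_{i}}
\]
for appropriate integers. The translation $\tau(-h\det\ind)$ on $\Epic{T}{X^A}$ then acts as a shift of the K\"ahler coordinates $z_{h}, z_{a_{l}}, z_{i}$ by the components of $-h\det\ind$ in this basis. Pulling the automorphy factor \eqref{automorphy universal} back by this shift and dividing by the original factor yields a product of elementary ratios of the form $\phi(zc,y)/\phi(z,y)$, one for each coordinate direction, with $c$ and $y$ specific monomials in the Chern roots and equivariant parameters.

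Next, using the explicit form $\phi(x,y)=\theta(xy)/[\theta(x)\theta(y)]$ together with the oddness and quasi-periodicity of $\theta$, each such elementary ratio simplifies, modulo a coboundary of the form $g(z+\lambda)/g(z)$, to the factor $\phi(c^{-1},y)$. Coboundaries define the trivial line bundle and may be discarded; collecting the remaining contributions and invoking the above decomposition of $\det\ind$, the surviving expression assembles precisely into $\phi(h^{-1},\det\ind)$, as claimed. The chief technical obstacle is the careful theta-functional bookkeeping needed to verify that the discrepancies arising from the non-multiplicativity of $\phi$ in either argument, i.e.\ the failure of $\phi(xy,z)=\phi(x,z)\phi(y,z)$, genuinely assemble into coboundaries rather than into further automorphy factors. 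This manipulation parallels the computation performed in \cite[Section~2.8.3]{aganagic2016elliptic} to establish the quasi-periodicity of $\mathscr{U}_{X}$ itself, applied here in a dual form to the translation operator.
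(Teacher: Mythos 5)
The paper does not actually prove this lemma: it is quoted from \cite[Section 2.8.3]{aganagic2016elliptic}, so there is no in-paper proof to match. That said, your strategy (compute the automorphy factor of the translated bundle in the coordinates supplied by the characteristic embedding, using the decomposition of $\det\ind$ along a basis of $\Pic{T}{X^A}$, and compare) is precisely the argument underlying the cited result, and the reduction to elementary ratios $\phi(zc,y)/\phi(z,y)$ is the right thing to do.

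Two things to tighten. First, the claimed elementary simplification has the wrong sign: using $\theta(qx)=-q^{-1/2}x^{-1}\theta(x)$ one finds $\phi(x,qy)=x^{-1}\phi(x,y)$, hence the ratio $\phi(zc,y)/\phi(z,y)$ acquires the factor $c^{-1}$ under $y\mapsto qy$ and is invariant under $z\mapsto qz$; these are the quasi-periods of $\phi(c,y)$, \emph{not} of $\phi(c^{-1},y)$. So the elementary block should be $\phi(c,y)$. You then need to pin down exactly which multiplicative shift $c$ the operator $\tau(-h\det\ind)$ induces on each K\"ahler coordinate (it is $c=h^{-m}$ if $\det\ind$ carries exponent $m$ along that basis direction), so that the product assembles to $\phi(h^{-1},\det\ind)$; with the $\phi(c^{-1},y)$ normalization you would instead land on $\phi(h,\det\ind)$. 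Second, the "chief technical obstacle" you flag is exactly where the work is, and it deserves a verification rather than an appeal to analogy: the key identity is that the failure of multiplicativity $\phi(x,y_1y_2)\,\phi(x,y_1)^{-1}\phi(x,y_2)^{-1}$ (and its analogue in the first argument) has trivial quasi-periods in all variables and therefore descends to a meromorphic function of the abelian variety, which is all that is needed to discard it when comparing line bundles; this is a short check with the same theta functional calculus, not a coboundary-bookkeeping exercise that might hide an extra factor.
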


\subsection{Off-shell stable envelopes}

Since the K-theory of a Nakajima variety (and hence also the one of its $A$-fixed components) is generated by the Chern roots of tautological bundles and the Thom sheaves $\Theta(E)$ for some $E\in K_T(X)$ are pullbacks by the characteristic embedding
\[
\chi_c:E_T(X)\to E_T\times \prod_{i\in I}E^{(\vi_i)}, %\qquad \chi_c:E_T(X^A)\to E_T\times E_{\vi,A}\footnote{by $E_{\vi,A}$ we mean the abelian variety that realizes the characteristic embedding of the quiver variety $X^A$. It is possible to give an explicit description of this variety, but we do not need it here},
\]
it is possible to define their stable envelopes as morphisms of the pushforward to $\Base{T}{X}$ of those line bundles whose pullbacks by the characteristic embeddings are the line bundles of Definition \ref{defn stable envelopes} $\Theta(T^{1/2}X^{A})\otimes \tau(-h\det\ind)^*\mathscr{U}_{X^{A}}$ and  $\Theta(T^{1/2}X)\otimes \mathscr{U}_X\otimes \Theta(h)^{\rk(\ind)}$.

Since the Picard group $\Pic{}{X}$ is a free abelian subgroup of $\Char(G_\vi)$ and $\Pic{T}{X}\cong \Char(T)\times \Pic{}{X}$, it is also possible to define stable envelopes as morphism of sheaves over the possibly enlarged base $E_T\times (\Char(T)\oplus \Char(G_\vi))\otimes_\Z E$. To recover the original definition, one simply restricts these stable envelopes to $\Base{T}{X}\subset E_T\times (\Char(T)\oplus \Char(G_\vi))\otimes_\Z E$. 
Following the literature, we call this latter version of stable envelopes off-shell stable envelopes. For a thorough description of off-shell stable envelopes see \cite{smirnov2018elliptic}.
\begin{remq}
The off-shell definition allows to obtain nontrivial stable envelopes directly from the data $\vi,\w$ and $Q$ even when the variety $X=\naka_Q(\vi,\w)$ is zero dimensional. This degenerate cases might sound useless at first, because the on-shell restriction of an off-shell stable envelope of a zero dimensional Nakajima variety $X$ is the identity map, but we will see that these off-shell stable envelopes are the building blocks for stable envelopes for nontrivial Nakajima varieties.
\end{remq}
\begin{eg}
\label{off shell zero dim grass}
To give an illustrative example, we consider the following case: let $Q=A_1$, $\vi=\w=\delta_1$, so that $X=\naka_Q(\vi,\w)$ is a singleton, and the torus $A$ is isomorphic to $\Ci^{\times}$.
Since the torus action is trivial, there is no need for a chamber.
The image of the characteristic embedding
\[
E_{T}(X)\hookrightarrow  E_a\times E_h\times E_t
\]
is the subvariety $\lbrace a=t\rbrace $, where $(a,h,t)$ are the coordinates on $E\times E\times E$. 
The Picard group $\Pic{T}{X}$ is simply the character group $\Char(T)$, but the off-shell version is $\Char(T)\oplus \Char(\GL(1))$.
Altogether, we have an embedding
\[
\Base{T}{X}\hookrightarrow (E_a\times E_h)\times (\Char(T)\oplus \Char(\GL(1))\otimes_{\Z} E).
\]
Clearly, there is only one $A$-fixed point, which is $X$ itself, and the class $\ind$ is trivial for every polarization, so there is only one stable envelope:
\[
    \text{Stab}:\mathscr{U}_{X^{A}} \to \Theta(T^{1/2}X)\otimes \mathscr{U}_{X}
\]
This map can be seen as the global section meromorpihc in $z$ of the line bundle over $E_a\times E_h\times E_t\times (\Char(T)\oplus \Char(\GL(1)))\otimes_\Z E$
\[
\Theta(T^{1/2}X)\otimes \mathscr{U}_{X}\otimes \mathscr{U}_{X^A}^{-1}
\]
satisfying $\text{Stab}(a,h,a)=1$. Let $T^{1/2}X$ be the standard polarization. Then the automorphy factors of these line bundles are:
\begin{alignat*}{2}
    &\theta\left(\frac{t}{a}\right)\qquad &&\text{from $\Theta(T^{1/2}X)$}\\
    &\frac{\theta(zt)}{\theta(z)\theta(t)}\frac{\theta(z)\theta(a)}{\theta(za)}\qquad &&\text{from $\mathscr{U}_{X}\otimes \mathscr{U}_{X^A}^{-1}$, cf. equation \eqref{automorphy universal}}.
\end{alignat*}
As a consequence, one can check that $\text{Stab}=\frac{\theta\left(\frac{tz}{a}\right)}{\theta(z)}$ has the correct quasi-periods in $t$ and $a$.
Notice that the on-shell restriction of $\text{Stab}$ is $
\restr{\frac{\theta\left(\frac{tz}{a}\right)}{\theta(z)}}{\{a=t\}}=1
$, as required.
\end{eg}

%%%%%%%%%%%%%%%%%%%%%%%%%%%%
%%%%%%%%%%%%%%%%%%%%%%%%%%%%

\section{Refinements of Nakajima varieties}
\label{Section 3}

\subsection{Definitions and first results}

Let $S$ be a maximal torus of $G_\vi$, let $\vi',\vi''\in \N^I$ be dimension vectors such that $\vi'+\vi''=\vi$ and let $G_{\vi',\vi''}$ be an algebraic subgroup of $G_\vi$ containing $S$ and isomorphic to $G_{\vi'}\times G_{\vi''}$.
Let also $i_{G_{\vi',\vi''}}$ denote the inclusion of $G_{\vi',\vi''}$ in $G_{\vi}$. 

The inclusion $i_{G_{\vi', \vi''}}$ induce an action of the group $G_{\vi',\vi''}$ on $T^*\Repn{Q}{\vi}{\w}$. 
The following lemma follows directly from the definitions:
\begin{lma}
\label{moment map refinement}
The action of $G_{\vi',\vi''}$ is Hamiltonian, and the composition
\[
(di_{G_{\vi', \vi''}})^*\circ \mu:T^*\Repn{Q}{\vi}{\w}\to \text{Lie}(G_{\vi'}\times G_{\vi''})^* 
\]
is its moment map.
\end{lma}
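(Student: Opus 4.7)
The plan is to deduce the statement from the general fact that if a group $G$ acts on a symplectic manifold in a Hamiltonian way with moment map $\mu$, then the restriction of the action to any closed subgroup $H \subset G$ is again Hamiltonian, with moment map obtained by composing $\mu$ with the transpose of the inclusion $\Lie(H) \hookrightarrow \Lie(G)$. I would simply specialize this recipe to $G = G_\vi$ and $H = G_{\vi',\vi''}$.

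More concretely, I would first recall from the definitions of the previous sections that the $G_\vi$-action on $T^*\Repn{Q}{\vi}{\w}$ preserves the canonical symplectic form $\omega$ and is Hamiltonian, with moment map $\mu : T^*\Repn{Q}{\vi}{\w} \to \g_\vi^*$. Next, for any $\xi \in \Lie(G_{\vi',\vi''})$, observe that the fundamental vector field $\xi_M$ generated by $\xi$ via the $G_{\vi',\vi''}$-action equals the fundamental vector field generated on $T^*\Repn{Q}{\vi}{\w}$ by $di_{G_{\vi',\vi''}}(\xi) \in \g_\vi$, because the $G_{\vi',\vi''}$-action is by definition the restriction of the $G_\vi$-action along $i_{G_{\vi',\vi''}}$. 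Then the moment map equation for $\mu$ gives
\[
\iota_{\xi_M} \omega \;=\; d\langle \mu,\, di_{G_{\vi',\vi''}}(\xi)\rangle \;=\; d\langle (di_{G_{\vi',\vi''}})^* \circ \mu,\, \xi\rangle,
\]
which is precisely the moment map equation for the composition $(di_{G_{\vi',\vi''}})^*\circ \mu$ with respect to the $G_{\vi',\vi''}$-action.

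It remains to check equivariance. The map $\mu$ is $G_\vi$-equivariant with respect to the coadjoint action, and restriction along $i_{G_{\vi',\vi''}}$ intertwines the coadjoint action of $G_\vi$ on $\g_\vi^*$ with the coadjoint action of $G_{\vi',\vi''}$ on $\Lie(G_{\vi'}\times G_{\vi''})^*$, so the composition $(di_{G_{\vi',\vi''}})^*\circ \mu$ is $G_{\vi',\vi''}$-equivariant. Combined with the previous paragraph this shows the claimed moment map property.

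There is no real obstacle here: the statement is an entirely formal consequence of the fact that Hamiltonian actions restrict to Hamiltonian actions along subgroup inclusions, plus the identification of fundamental vector fields under the inclusion $\Lie(G_{\vi',\vi''}) \hookrightarrow \g_\vi$. The only thing worth being careful about is to make sure that the identification $G_{\vi',\vi''} \cong G_{\vi'}\times G_{\vi''}$ is used consistently so that the target of the moment map is indeed written as $\Lie(G_{\vi'}\times G_{\vi''})^*$ rather than as a subspace of $\g_\vi^*$.
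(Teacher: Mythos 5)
Your argument is correct and is exactly the standard restriction-of-Hamiltonian-actions argument that the paper is invoking when it says the lemma ``follows directly from the definitions'' (the paper gives no explicit proof). The identification of fundamental vector fields under $di_{G_{\vi',\vi''}}$ and the equivariance check are precisely the ingredients the paper leaves implicit.
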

Let $\mu_{\vi',\vi''}$ denote the composition $(di_{G_{\vi', \vi''}})^*\circ \mu$. The character $\theta\in \Char({G_{\vi}})$ restricts to  characters of $G_{\vi',\vi''}$ and $S$, which we denote in the same way.
With this convention, we can define the following GIT quotient:
\[
\naka_{Q,\theta}^{G_{\vi',\vi''}}(\vi,\w):=T^*\Repn{Q}{\vi}{\w}////^{\theta}G_{\vi',\vi''}=\mu_{\vi',\vi''}^{-1}(0)//^{\theta} G_{\vi',\vi''}.
\]
\begin{defn}
    We call the symplectic variety $\naka^{G_{\vi',\vi''}}_{Q,\theta}(\vi,\w)$ the $G_{\vi',\vi''}$-refinement of the Nakajima variety $\naka_{Q,\theta}(\vi,\w)$.
\end{defn}

In the following, we will always assume that the character $\theta$ is generic for the actions of  $G_\vi$, $G_{\vi',\vi''}$ and $S$. This ensures that the corresponding semistable loci are actually stable. In particular, this implies that $\naka_{Q,\theta}^{G_{\vi'\vi''}}(\vi,\w)$ is a smooth symplectic variety of dimension
\[
2\dim{\Repn{Q}{\vi}{\w}}-2 \dim{\text{Lie}(G_{\vi'})}-2 \dim{\text{Lie}(G_{\vi''})}.
\]

\begin{propn}
\label{v-twist of embedding }
Let  $G_{\vi',\vi''}$, and $\widetilde G_{\vi',\vi''}$ be two algebraic subgroups of $G_\vi$ isomorphic to $G_{\vi'}\times G_{\vi''}$. Then the varieties $\naka_{Q,\theta}^{ G_{\vi',\vi''}}(\vi,\w)$ and $\naka_{Q,\theta}^{\widetilde G_{\vi',\vi''}}(\vi,\w)$ are isomoprhic.
\end{propn}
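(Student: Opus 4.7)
The strategy is to exhibit an element $g \in G_\vi$ such that $\widetilde G_{\vi',\vi''} = g\,G_{\vi',\vi''}\,g^{-1}$ and then show that its action $\phi_g$ on $T^*\Repn{Q}{\vi}{\w}$ descends to the desired isomorphism of GIT quotients.

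First, I would observe that every algebraic subgroup of $G_\vi = \prod_{i\in I}\GL(\vi_i)$ of the given Levi type is realised as the stabiliser of a direct sum decomposition $V_i = V_i'\oplus V_i''$ with $\dim V_i' = \vi'_i$ and $\dim V_i'' = \vi''_i$ for every $i\in I$. Since $G_\vi$ acts transitively on the set of such decompositions, any two such subgroups are $G_\vi$-conjugate, producing the required $g$.

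Next I would analyse $\phi_g$. Because $G_\vi$ preserves the symplectic form, $\phi_g$ is a symplectomorphism, and by construction it intertwines the actions of $G_{\vi',\vi''}$ and $\widetilde G_{\vi',\vi''}$ along the group isomorphism $\Phi\colon h\mapsto ghg^{-1}$. Using the $G_\vi$-equivariance of the moment map $\mu$ together with the identity $\text{Ad}(g)\,\Lie(G_{\vi',\vi''}) = \Lie(\widetilde G_{\vi',\vi''})$, a short computation based on Lemma \ref{moment map refinement} shows that $\phi_g$ sends $\mu_{\vi',\vi''}^{-1}(0)$ onto the corresponding zero level set for $\widetilde G_{\vi',\vi''}$.

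Finally, for the stability condition, I would use that $\theta$ is a character of $G_\vi$ with values in the abelian group $\Ci^\times$, so $\theta(ghg^{-1})=\theta(h)$ for every $h\in G_{\vi',\vi''}$. This means $\theta|_{\widetilde G_{\vi',\vi''}}\circ\Phi = \theta|_{G_{\vi',\vi''}}$, and the $\Phi$-equivariance of $\phi_g$ yields a bijection between $\theta$-semistable points for the two actions. The induced map on quotients is the desired isomorphism. The main obstacle is the conjugacy claim in the first step, particularly the borderline case $\vi'=\vi''$, where the abstract isomorphism to $G_{\vi'}\times G_{\vi''}$ could swap the two factors; even then a conjugating element exists in $G_\vi$, possibly after precomposing with the obvious swap automorphism of the product, so the argument goes through.
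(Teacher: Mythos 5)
Your proof is correct and follows the same strategy as the paper: exhibit a conjugating element $g\in G_\vi$ and use its action on $T^*\Repn{Q}{\vi}{\w}$, equivariant along $c_g$, to induce the isomorphism of GIT quotients. You supply more detail than the paper does (the realization of the subgroups as stabilizers of decompositions, the moment-map and semistability checks, and the factor-swap caveat when $\vi'=\vi''$), all of which relies on the paper's standing assumption that $G_{\vi',\vi''}$ contains the fixed maximal torus $S\subset G_\vi$, the hypothesis that actually guarantees the conjugacy.
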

\begin{proof}
By definition, both $G_{\vi',\vi''}$ and $\widetilde G_{\vi',\vi''}$ are isomorphic to $
G_{\vi'}\times G_{\vi''}$ so they are conjugated in $G_\vi$, say by $g$.
The action of $g$ on $T^*\Repn{Q}{\vi}{\w}$ gives a linear automorphism along the conjugation
\[
c_g:G_{\vi',\vi''}\to \widetilde G_{\vi',\vi''}\qquad h\mapsto ghg^{-1}
\]
and hence an isomoprhism of the GIT quotients $\naka_{Q,\theta}^{G_{\vi',\vi''}}(\vi,\w)$ and $\naka_{Q,\theta}^{\widetilde G_{\vi',\vi''}}(\vi,\w)$.
\end{proof}

We end this section with some examples:
\begin{eg}
Let the quiver $(Q,I)$ be the Dynkin diagram $A_1$, $\vi=(v)$, $\w=(w)$ such that $v< w$ and $\theta>0$. The Nakajima variety $\naka_{Q,\theta}(\vi,\w)$ is isomorphic to the cotangent bundle of the Grassmannian $T^*\Gr{v}{w}$. Then a $G_{\vi',\vi''}$-refinement of the Nakajima variety $\naka_{Q,\theta}(\vi,\w)$ is isomorphic to $T^*\Gr{v'}{w}\times T^*\Gr{v''}{w}$, where $\vi'=(v')$ and $\vi''=(v'')$.
\end{eg}
The latter was a rather special case, because in general the $G_{\vi',\vi''}$-refinement of the Nakajima variety $\naka_{Q,\theta}(\vi,\w)$ is not isomorphic to $\naka_{Q,\theta}(\vi',\w)\times \naka_{Q,\theta}(\vi'',\w)$, as the next example shows.
\begin{eg}
Let now the quiver $(Q,I)$ be the oriented Dynkin diagram $A_k$, and let 
\begin{align*}
    &\vi=(v_1,\dots,v_k)\\
    &\w=(w,0,\dots,0)
\end{align*}
with $w> v_1> v_2> \dots> v_k$, and $\theta=(1,\dots,1)$. Then the framed doubled quiver looks like
\[
\begin{tikzcd}[row sep=large, column sep = large]
\Ci^{v_1}\arrow[d,bend left] \arrow[r,bend left] & \Ci^{v_2} \arrow[r,bend left]  \arrow[l, bend left] & \cdots\arrow[r, bend left]\arrow[l, bend left] & \Ci^{v_{k-1}}\arrow[l, bend left]\arrow[r,bend left]  & \Ci^{v_k}\arrow[l,bend left]\\ 
\Ci^w\arrow[u,bend left]
\end{tikzcd}
\]
The resulting Nakajima variety $\naka_Q(\vi,\w)$ is isomorphic to the cotangent bundle of the flag variety $\Fl(v_k,\dots,v_1,w)$. In this case, the $G_{\vi',\vi''}$-refinement of the Nakajima variety $\naka_{Q,\theta}(\vi,\w)$ associated to the dimension vectors 
\begin{align*}
    &\vi'=(v_1',\dots,v_k')\\
     &\vi''=(v_1'',\dots,v_k'')
\end{align*}
such that $\vi'+\vi''=\vi$ is not isomorphic to $T^*\Fl(v_k',\dots,v_1',w)\times T^*\Fl(v_k'',\dots,v_1'',w)$ in general. This can be seen by noticing that 
\[
\dim(\naka_{Q,\theta}^{\G_{\vi',\vi''}}(\vi,\w))-\dim(T^*\Fl(v_k',\dots,v_1',n)\times T^*\Fl(v_k'',\dots,v_1'',w))=2\sum_{i=2}^{k-1}v_i' v_{i+1}''+v_i'' v_{i+1}'>0
\]
for a generic choice of vectors $\vi'$ and $\vi''$.
\end{eg}

\subsection{Notation}

We introduce the following more compact notations for Nakajima varieties and their refinements: Given a quiver $Q$, we denote the Nakajima variety $\naka_{Q,\theta}(\vi,\w)$ by $X^\vi_\w$, and its $G_{\vi',\vi''}$-refinement $\naka_{Q,\theta}^{G_{\vi',\vi''}}(\vi,\w)$ by $X^{\vi',\vi''}_\w$.

\subsection{Tautological bundles}
\label{Tautological bundles}
Like Nakajima varieties, also the refinements $X_\w^{\vi',\vi''}$ admit tautological bundles, defined in analogy with equation \eqref{tautological bundles naka}:
\[
\aV_{i}={\mu_{\vi',\vi''}^{-1}(0)^{\theta-ss}\times V_i} /\G_{\vi',\vi''}\qquad \aW_i=X_\w^{\vi',\vi''}\times W_i.
\]
Notice that, since we are taking quotients by the subgroup $G_{\vi',\vi''}\cong G_{\vi'}\times G_{\vi''}$, the vector bundles $\aV_i$ split as direct sums
\begin{equation}
\label{splitting vector bundles}
    \aV_i=\aV'_i\oplus \aV_i'',
\end{equation}
of the vector bundles
\[
\aV_{i}'={\mu_{\vi',\vi''}^{-1}(0)}^{   \theta-ss}\times V'_i /\G_{\vi',\vi''}\qquad \aV''_i={\mu_{\vi',\vi''}^{-1}(0)}^{\theta-ss}\times V_i''/\G_{\vi',\vi''},
\]
where $V_i'$ and $V_i''$ are the subspaces of $V_i$ such that $G_{\vi',\vi''}=\prod_{i\in I}\GL(V_i')\times \GL(V_i'')$.
%Here $V'_i$ is the vector subspace of $V_i$ fixed by $G_{\vi''}\subset G_{\vi',\vi''}$ and, dually, $V''_i$ is the vector subspace of $V_i$ fixed by $G_{\vi'}\subset G_{\vi',\vi''}$. To check the splitting \eqref{splitting vector bundles}, simply notice that $V'_i\oplus V_i''=V_i$.
\subsection{Polarizations}
Like in the case of Nakajima varieties, we can also define the standard polarization for the refinements $X^{\vi',\vi''}_\w=T^*\Repn{Q}{\vi}{\w}////G_{\vi',\vi''}$ as the class
\begin{equation}
    \label{standard polarization refinements}
    T^{1/2}X^{\vi',\vi''}_\w=\Repn{Q}{\aV}{\aW}-\g_{\aV',\aV''},
\end{equation}
where
	\[
	\g_{\aV',\aV''}=\bigoplus_{i\in I }\Hom(\aV'_i,\aV'_i)\oplus\Hom(\aV''_i,\aV''_i)
	\]
	and
	\[
	\Repn{Q}{\aV}{\aW}=\bigoplus_{e\in Q}\Hom(\aV_{(t(e))}, \aV_{(h(e))})\oplus \bigoplus_{i\in I}\Hom(\aV_i,\aW_i)
	\]
The polarization satisfies the equation $TX^{\vi',\vi''}_\w=T^{1/2}X^{\vi',\vi''}_\w+h^{-1}(T^{1/2}X^{\vi',\vi''}_\w)^\vee$ in $K_T(X_\w^{\vi',\vi''})$.

\subsection{Connecting maps}

In the following section, we fix dimension vectors $\vi, \vi',\vi'', \w\in N^I$ such that $\vi'+\vi''=\vi$ and a subgroup $G_{\vi',\vi''}$.
We now build maps relating a Nakajima variety $X^\vi_\w=\naka_Q(\vi,\w)$ with its $G_{\vi',\vi''}$-refinement $X^{\vi',\vi''}_\w=\naka^{G_{\vi',\vi''}}_Q(\vi,\w)$. We will then use the corresponding maps in cohomology to relate the elliptic stable envelopes of $X^\vi_\w$ with the ones of its refinement.

The following result is a straightforward modification of \cite[Section 4.3.1]{aganagic2016elliptic}, which is in turn based on \cite{Shenfeld}.
\begin{propn}
\label{maps relating naka and abel naka}
Let $P$ be a parabolic subgroup containing $G_{\vi',\vi''}\subset G_{\vi}$ and let $\n_{\vi',\vi''}\subset \g_{\vi}$ be the nilpotent subalgebra such that $\mathfrak{p}=\text{Lie}(P)=\g_{\vi'}\oplus\g_{\vi''}\oplus \n_{\vi',\vi''}$.
There exist smooth $T$-varieties $\Fl$ and $\widetilde \Fl$ and smooth morphisms of $T$-varieties $j_+:\Fl\to \widetilde\Fl$, $j_-:\widetilde\Fl\to X^{\vi',\vi''}_\w$ and $\pi:\Fl\to X_\w^{\vi}$ such that 
\begin{enumerate}
    \item the maps $j_+$ and $j_-$ are closed embeddings,
    \item \label{class normal j-}the class of the normal bundle to $j_-$ is $(j_-)^*(h^{-1}\Nbund^\vee)$ where 
    $\Nbund$ is the vector bundle over $X_\w^{\vi',\vi''}$ associated to the nilponent subalgebra $\n_{\vi',\vi''}$:
    \[
    \Nbund=\mu_{\vi',\vi''}^{-1}(0)^{\theta-ss}\times \n_{\vi',\vi''}/G_{\vi',\vi''},
    \]
    \item the map $\pi:\Fl\to X_\w^{\vi}$ is a fiber bundle with fiber $G_\vi/P$,
    \item the class of the bundle $\ker d\pi$ is $(j_-\circ j_+)^*\Nbund^\vee$.
\end{enumerate}
\end{propn}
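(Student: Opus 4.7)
My plan is to follow the construction of \cite[Section 4.3.1]{aganagic2016elliptic}, itself based on Shenfeld's thesis, which treats the analogous problem for the abelianization case (a Borel $B$ and its Levi $S$). The key observation is that the argument there goes through after replacing $B$ and $S$ with the parabolic $P$ and its Levi $G_{\vi',\vi''}$.

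I would set
\[
\Fl := \mu^{-1}(0)^{\theta-ss}/P, \qquad \widetilde{\Fl} := \mu_{\mathfrak{p}}^{-1}(0)^{\theta-ss}/G_{\vi',\vi''},
\]
where $\mu_{\mathfrak{p}} := (di_P)^* \circ \mu$ is the moment map for the $P$-action, obtained by restriction from $\mu$ exactly as in Lemma \ref{moment map refinement}. Property (3) then holds because $\Fl = \mu^{-1}(0)^{\theta-ss} \times_{G_\vi} (G_\vi/P)$ is the associated bundle of the principal $G_\vi$-bundle $\mu^{-1}(0)^{\theta-ss} \to X_\w^\vi$, so the map $\pi$ obtained by enlarging the quotient from $P$ to $G_\vi$ is locally trivial with fiber $G_\vi/P$. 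For $j_-$, the $G_{\vi',\vi''}$-equivariant closed inclusion $\mu_{\mathfrak{p}}^{-1}(0) \hookrightarrow \mu_{\vi',\vi''}^{-1}(0)$ descends on GIT quotients to a closed embedding $j_- : \widetilde{\Fl} \hookrightarrow X_\w^{\vi',\vi''}$ cutting out $\widetilde{\Fl}$ as the zero locus of the section determined by $\mu|_{\n_{\vi',\vi''}}$. Viewing this section as taking values in the bundle on $X_\w^{\vi',\vi''}$ associated to the $G_{\vi',\vi''}$-module $\n^*_{\vi',\vi''}$, and accounting for the $h^{-1}$-weight of the symplectic form (and hence of $\mu$), the normal bundle of $j_-$ is identified with $j_-^*(h^{-1}\Nbund^\vee)$, yielding property (2).

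The map $j_+$ is the more delicate point. The closed inclusion $\mu^{-1}(0) \hookrightarrow \mu_{\mathfrak{p}}^{-1}(0)$ descends to a $G_{\vi',\vi''}$-equivariant closed embedding $\mu^{-1}(0)^{\theta-ss}/G_{\vi',\vi''} \hookrightarrow \widetilde{\Fl}$, but to obtain a map from $\Fl = \mu^{-1}(0)^{\theta-ss}/P$ one must still contend with the unipotent radical $U$ of $P$. Following Shenfeld, this is done by using that on $\mu^{-1}(0)^{\theta-ss}$ the moment map for the $U$-action vanishes identically (since $\mu$ does), which combined with a Levi decomposition $P = G_{\vi',\vi''} \ltimes U$ yields the required closed embedding $j_+$. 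Property (4) then follows by a routine calculation: the relative tangent bundle $\ker d\pi$ is the associated bundle for the $P$-representation $\g_\vi/\mathfrak{p} \cong \bar{\n}_{\vi',\vi''}$, which via the Killing duality $\bar{\n}_{\vi',\vi''} \cong \n_{\vi',\vi''}^\vee$ of $G_{\vi',\vi''}$-modules agrees with $(j_- \circ j_+)^* \Nbund^\vee$. The main obstacle is the construction of $j_+$: since we quotient by the non-abelian parabolic $P$ rather than by a torus, some extra care is needed to handle the $U$-quotient, but this is exactly the content of the ``straightforward modification'' of the Shenfeld--Aganagic--Okounkov argument referred to in the statement.
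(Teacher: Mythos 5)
Your choice of $\Fl$ diverges from the paper's and this is where the proposal breaks. Botta (following Aganagic--Okounkov and Shenfeld) takes $\Fl := \mu^{-1}(0)//G_{\vi',\vi''}$, i.e.\ the GIT quotient of $\mu^{-1}(0)$ by the \emph{Levi} $G_{\vi',\vi''}$, not by the parabolic $P$; with that definition all three spaces $\Fl$, $\widetilde\Fl = \mu^{-1}(\mathfrak{p}^\perp)//G_{\vi',\vi''}$ and $X_\w^{\vi',\vi''}$ are quotients by the same group $G_{\vi',\vi''}$, and $j_+$, $j_-$ are instantly induced by the closed, $G_{\vi',\vi''}$-invariant inclusions $\mu^{-1}(0)\subset\mu^{-1}(\mathfrak{p}^\perp)\subset\mu_{\vi',\vi''}^{-1}(0)$. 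The map $\pi$ is then the further quotient from $G_{\vi',\vi''}$ to $G_\vi$; it factors as an affine $U$-bundle $\Fl\to\mu^{-1}(0)^{\theta\text{-ss}}/P$ followed by a proper $G_\vi/P$-bundle onto $X_\w^\vi$, and items (3)--(4) (and the pushforward formula $\pi!(f)=\Shuffle(f/\Theta(\Nbund^\vee))$ later in the paper) must be read through that factorization.

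By instead setting $\Fl := \mu^{-1}(0)^{\theta\text{-ss}}/P$ you make (3) and (4) literal, but you lose the construction of $j_+$: the inclusion $\mu^{-1}(0)\subset\mu^{-1}(\mathfrak{p}^\perp)$ descends after quotienting \emph{by $G_{\vi',\vi''}$} to a closed embedding of the Levi quotient $\mu^{-1}(0)^{\theta\text{-ss}}/G_{\vi',\vi''}\hookrightarrow\widetilde\Fl$, which is an affine $U$-bundle \emph{over} your $\Fl$, not isomorphic to it. To obtain a closed embedding of $\mu^{-1}(0)^{\theta\text{-ss}}/P$ into $\widetilde\Fl$ you would need a $T$-equivariant algebraic section of that $U$-bundle, and no natural one exists. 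The facts you invoke --- vanishing of the $U$-moment map on $\mu^{-1}(0)$ and the Levi decomposition $P=G_{\vi',\vi''}\ltimes U$ --- establish precisely that $\mu^{-1}(0)^{\theta\text{-ss}}/G_{\vi',\vi''}\to\mu^{-1}(0)^{\theta\text{-ss}}/P$ is a $U$-bundle, not that it admits a section. So the step ``yields the required closed embedding $j_+$'' is unsupported, and this is a genuine gap rather than something that follows by a routine modification of the Shenfeld--Aganagic--Okounkov argument (whose $\Fl$ is also the torus/Levi quotient, not the Borel/parabolic quotient).

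Your computations of $\widetilde\Fl$, of the normal bundle of $j_-$, and of the $G_\vi/P$-bundle structure on $\mu^{-1}(0)^{\theta\text{-ss}}/P$ are all correct; the fix is simply to use the Levi quotient for $\Fl$ and to interpret (3)--(4) through the affine-bundle factorization of $\pi$, which is also what is needed to make sense of $\pi!$ (the $U$-fiber is not proper, so $\pi!$ is $p!\circ(a^*)^{-1}$ for $a$ the affine bundle and $p$ the $G_\vi/P$-bundle).
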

Explicitly, we have $\Fl=\mu^{-1}(0)//{G_{\vi',\vi''}}$, $\widetilde\Fl=\mu^{-1}(\mathfrak{p}^\perp)//{G_{\vi',\vi''}}$, and the maps $j_+$ and $j_-$ are induced by the closed inclusions
\[
\mu^{-1}(0)\subset \mu^{-1}(\mathfrak{p}^\perp)\subset \mu_{\vi',\vi''}^{-1}(0),
\]
where $\mu:T^*\Repn{Q}{\vi}{\w}\to \g_\vi^*$ is the moment map associated to the action of $G_\vi$.
As it is shown in the diagram below, the maps in Theorem \ref{maps relating naka and abel naka} realize a chain of morphisms relating the Nakajima variety $\naka_Q(\vi,\w)$ to its $G_{\vi',\vi''}$-refinement:
\begin{equation}
\label{L-shaped diagram}
    \begin{tikzcd}
    \Fl\arrow[d, "\pi"]\arrow[r, "j_+"] & \widetilde\Fl\arrow[r, "j_-"] &X_\w^{\vi',\vi''} \\
    X_\w^{\vi}
    \end{tikzcd}
\end{equation}
We remark that the variety $\widetilde\Fl$ and all the maps in the diagram above depend on the choice of the parabolic subgroup $P$.

\subsection{Refinements of stable envelopes}

Being a diagram of morphism of $T$-varieties, The L-shaped diagram \eqref{L-shaped diagram} restricts to the $A$-fixed locus:
\[
\begin{tikzcd}
    \Fl^A\arrow[d, "\pi^{A}"]\arrow[r, "j_{+}^{A}"] & {\widetilde\Fl}^A\arrow[r, "j_{-}^{A}"] &(X^{\vi',\vi''}_\w)^{A} \\
    (X^\vi_\w)^{A}
    \end{tikzcd}
\]

\begin{lma}
\label{lemma choice componets above}
Let $F\subset X$ be an $A$-fixed component and $\mathfrak{C}$ a chamber. For a given choice of parabolic subgroup $P$, there exists a unique $A$-fixed component $F^{\vi',\vi''}$ in $(X_\w^{\vi',\vi''})^{A}$ such that
\begin{enumerate}
    \item $\pi(F^{\vi',\vi''}\cap \Fl)\subset F$
    \item $\restr{\Nbund}{F^{\vi',\vi''}}$ has no repulsive directions, i.e. $\restr{\Nbund}{F^{\vi',\vi''},<}=0$
\end{enumerate}
\end{lma}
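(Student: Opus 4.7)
The plan is to describe $A$-fixed components on both sides combinatorially through $A$-weight decompositions of the tautological bundles, then translate conditions (1) and (2) into explicit constraints on these decompositions that together pin down a unique component.

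First, by iterating the proposition on fixed components of Nakajima varieties (applied to one-parameter subtori exhausting $A$), any $A$-fixed component $F \subset X^\vi_\w$ is classified by a decomposition $\V_i = \bigoplus_\mu \V_i^\mu$ of the tautological bundles into $A$-weight subspaces, for $\mu \in \Char(A)$. The same analysis applied to the refinement $X^{\vi',\vi''}_\w$, combined with the built-in splitting $\aV_i = \aV_i' \oplus \aV_i''$, shows that its $A$-fixed components are labeled by separate $A$-weight decompositions $\aV_i' = \bigoplus_\mu (\aV_i')^\mu$ and $\aV_i'' = \bigoplus_\mu (\aV_i'')^\mu$ with $\sum_\mu \dim(\aV_i')^\mu = \vi_i'$ and $\sum_\mu \dim(\aV_i'')^\mu = \vi_i''$.

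Next, I would argue that condition (1), $\pi(F^{\vi',\vi''}\cap \Fl)\subset F$, translates to the compatibility relations $\dim(\aV_i')^\mu + \dim(\aV_i'')^\mu = \dim \V_i^\mu$ for every $i$ and $\mu$: a $G_{\vi',\vi''}$-orbit in $\mu^{-1}(0)^{\theta-ss}$ whose underlying point in $X_\w^\vi$ lies in $F$ must have $V_i = V_i' \oplus V_i''$ refining the $A$-eigenspace decomposition determined by $F$. I would then compute the $A$-weights of $\Nbund$ at such a fixed component: the fiber $\n_{\vi',\vi''}$ decomposes under the parabolic's natural grading on $\g_\vi$ as a sum of $\Hom$-spaces between the $V_i'$ and $V_i''$ blocks (with the direction depending on $P$), so restricted to $F^{\vi',\vi''}$ it splits as $\bigoplus_i \bigoplus_{\mu,\mu'}\Hom((V_i')^\mu, (V_i'')^{\mu'})$, whose $(\mu,\mu')$-summand carries $A$-weight $\mu^{-1}\mu'$.

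Third, since $\mathfrak{C}$ is a chamber every nonzero $A$-weight appearing in these $\Hom$-spaces is either attracting or repelling, so $\mathfrak{C}$ induces a strict ordering on the relevant characters by declaring $\mu \prec \mu'$ when $\mu^{-1}\mu'$ is attracting. Condition (2), $\restr{\Nbund}{F^{\vi',\vi''},<}=0$, becomes the requirement that for each $i$, every character $\mu$ with $(\aV_i')^\mu \neq 0$ satisfies $\mu \preceq \mu'$ for every $\mu'$ with $(\aV_i'')^{\mu'} \neq 0$ (with the roles of $\aV'$ and $\aV''$ swapping according to $P$). Together with the rank constraints this uniquely prescribes each $\dim(\aV_i')^\mu$: one fills the initial segment of $\prec$ greedily until the partial sums of multiplicities reach $\vi_i'$, assigning the remainder to $\aV_i''$.

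The main obstacle is the edge case where a single weight space $\V_i^\mu$ must be split nontrivially because partial sums of $\dim \V_j^\mu$ jump past $\vi_i'$. This is harmless: condition (2) only constrains cross-terms $\mu \neq \mu'$, while the diagonal summands $\Hom((V_i')^\mu, (V_i'')^\mu)$ carry trivial $A$-weight and so contribute only to $\Nbund_{0}$, never $\Nbund_{<}$. Hence the split of such a $\V_i^\mu$ is unconstrained at the level of subspaces yet unique at the level of dimensions, which is precisely the data labeling a fixed component, so $F^{\vi',\vi''}$ exists and is unique.
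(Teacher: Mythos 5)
Your proposal is essentially correct and follows the same combinatorial idea as the paper: classify $A$-fixed components by weight decompositions of the tautological bundles, translate condition (1) into a dimension-compatibility constraint, translate condition (2) into an ordering constraint on the characters across the $V'/V''$ split, and observe that these together force a unique dimension assignment (the ``greedy'' filling), with the degenerate case of shared weights harmless because intra-weight $\Hom$'s sit in $\Nbund_{0}$ rather than $\Nbund_{<}$.

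Where the paper differs is in how it makes the weight-decomposition classification rigorous: it works with \emph{compensating maps} $\psi_{F}\colon A\to G_\vi$ (respectively $\psi_{F^{\vi',\vi''}}\colon A\to G_{\vi',\vi''}$), exploiting that the groups act freely on the semistable prequotients, so that fixed components correspond bijectively (up to conjugation) to such maps. This is the clean way to justify the assertion you make at the start — that $A$-fixed components of $X_\w^\vi$ are labeled by weight decompositions $\V_i = \bigoplus_\mu \V_i^\mu$, and likewise for the refinement. Your proposal states this but doesn't derive it; for $X_\w^{\vi',\vi''}$, which is not itself a Nakajima variety, the standard quiver-variety fixed-point formula you invoke doesn't literally apply, and the free-action/compensating-map argument is what fills this in. The paper then realizes your ``greedy'' step very explicitly as a choice of permutation $\tau^{(i)}\in\mathfrak{S}_{\vi_i}$ sorting the characters so that $\chi_{i,\tau^{(i)}(j)}\chi_{i,\tau^{(i)}(k)}^{-1}$ is attracting for $j<k$, and proves uniqueness by noting that further permuting within the $\vi_i'$- and $\vi_i''$-blocks leaves the fixed component unchanged — precisely your observation that the split of a repeated weight $\V_i^\mu$ is unconstrained at the subspace level but unique at the dimension level. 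So the two arguments match; yours should just cite or sketch the free-action/compensating-map justification for the classification statement it opens with.
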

\noindent
The first condition in particular implies that the previous diagram restricts further as follows:
\[
\begin{tikzcd}
    F^{\vi',\vi''}\cap\Fl\arrow[d, "\pi^{A}"]\arrow[r, "j_+^{A}"] & F^{\vi',\vi''}\cap{\widetilde\Fl}\arrow[r, "j_-^{A}"] & F^{\vi',\vi''}\\
    F
    \end{tikzcd}
\]
\begin{proof}
Without loss of generality, we assume that the subgroup $G_{\vi',\vi''}\subset G_{\vi}=\prod_{i\in I}\GL(\vi_i)$ consists of block diagonal matrices
\[
\begin{pmatrix}
\GL({\vi_i'}) & 0\\
0 & \GL({\vi_i''})
\end{pmatrix}
\]
and that the parabolic subgroup $P\subset G_{\vi}$ is of the form 
\[
\begin{pmatrix}
\GL({\vi_i'}) & \ast\\
0 & \GL({\vi_i''})
\end{pmatrix}
\]
Since the $G_\vi$-action on the prequotient $\mu^{-1}(0)^{\theta-ss}$ is free, for every fixed point $F\subset X$ there is a ``compensating" map 
\[
\psi_{F}:A\to G_{\vi}
\]
such that the premiage of the fixed component $F$ in $\mu^{-1}(0)$ is fixed by the action of $(a,\psi_{F}(a))\in A\times G_{\vi}$. Also notice that the compensating map is not unique, because conjugation of $\psi_{F}$ by some $g\in G_{\vi}$ gives another compensating map for the same component $F$.

Similarly, there is an analogous map
\[
\psi_{F^{\vi',\vi''}}:A\to G_{\vi',\vi''}
\]
for every fixed component $F^{\vi',\vi''}\subset X_\w^{\vi',\vi''}$.
As a consequence, the first condition of the statement of the lemma is equivalent to the requirement that the composition
\[
\begin{tikzcd}
A\arrow[r, "\psi_{F^{\vi',\vi''}}"]&G_{\vi',\vi''}\arrow[r,hookrightarrow]&G_{\vi}
\end{tikzcd}
\]
is conjugated to $\psi_{F}$, while the
second condition is equivalent to the statement that the action of $A$ on $\mathfrak{n}_{\vi',\vi''}\subset \g_{\vi',\vi''}$ induced by $\psi_{F^{\vi',\vi''}}$ has no repelling weights.
Therefore, it is enough to find a compensating map $\psi_{F^{\vi',\vi''}}$ satisfying the two previous conditions.
The map $\psi_F$ gives a weight decomposition of each vector space $V_i=V_i'\oplus V_i''$:
\[
V_i'=\chi_{i,1}\oplus\dots\oplus \chi_{i,\vi_i'}, \qquad V_i''=\chi_{i,\vi_i'+1}\oplus\dots\oplus \chi_{i,\vi_i}
\]
and the characters $\chi_{i,j}\chi_{i,k}^{-1}$ with $1\leq j\leq \vi_i'$ and $\vi_i''+1\leq k\leq \vi_i$ are the ones that appear in the $A$-action on $\mathfrak{n}_{\vi',\vi''}$ induced by $\psi_F$. 

For every $i\in I $ we choose a permutation $\tau^{(i)}\in \mathfrak{S}_{\vi_i}$ such that $\chi_{i,\tau^{(i)}(j)}\chi_{i,\tau^{(i)}(k)}^{-1}>0$ for $j< k$. Let $\tau=\prod_{i\in I}\tau^{(i)}$.  By construction, the composite map $c_\tau\circ \psi_F$ induces an $A$-action on $\mathfrak{n}_{\vi',\vi''}$ with no repelling weights and defines a fixed point whose image by $\pi$ is exactly $F$. Thus, the map $c_\tau\circ \psi_F$ is the sought-after map $\psi_{F^{\vi',\vi''}}$.
Uniqueness follows because even though compositions of $c_\tau\circ \psi_F$ by permutations $\tau'\in \prod_{i}\mathfrak{S}_{\vi_i'}\times\prod_{i}\mathfrak{S}_{\vi_i''}\subset \prod_{i}\mathfrak{S}_{\vi_i}$ give different maps that induce an $A$ action on $\mathfrak{n}_{\vi',\vi''}$ with no repelling weights, they define the same fixed component in $X_\w^{\vi',\vi''}$.
\end{proof}
The next theorem, which is a straightforward modification of \cite[Section 4.3]{aganagic2016elliptic}, allows to express the stable envelopes $\Stab{C}{F}$ of $X^\vi_\w$ in terms of the stable envelopes $\Stab{C}{F^{\vi',\vi''}}$ of $X^{\vi',\vi''}_{\w}$.
\begin{thm}
\label{theorem abelianization}
The elliptic stable envelope $\Stab{C}{F}$ with standard polarization $T^{1/2}X^\vi_\w$ fits in the following diagram:
\begin{equation}
\label{stable envelopes }
    \begin{tikzcd}[column sep=huge]
    \Theta(T^{1/2}F)\otimes \tau^*\mathscr{U}_{(X^\vi_\w)^{A}}\arrow[rr, "j^{A}_-!\circ ((j_+^{A})^*)^{-1} \circ(\pi_{A}!)^{-1}"]\arrow[d, "\Stab{C}{F}"]& & \Theta(T^{1/2}F^{\vi',\vi''})\otimes \tau^*\mathscr{U}_{(X_\w^{\vi',\vi''})^{A}} \arrow[d, "\lambda^*\Stab{C}{F^{\vi',\vi''}}"]\\
    \Theta(T^{1/2}X^\vi_\w)\otimes \mathscr{U}_{X^\vi_\w}\otimes \Theta(h)^{\rk(\ind)}& & \Theta(T^{1/2}X_\w^{\vi',\vi''})\otimes \mathscr{U}_{X_\w^{\vi',\vi''}}\otimes \Theta(h)^{\rk(\ind')}\arrow[ll, "\pi!\circ j_+^*\circ (j_-!)^{-1}"]
    \end{tikzcd}
\end{equation}
with the following notation:
\begin{itemize}
    \item $T^{1/2}X_\w^{\vi',\vi''}$ is the polarization $T^{1/2}X_\w^{\vi',\vi''}=T^{1/2}X^{\vi',\vi'',\text{standard}}_\w+h^{-1}\Nbund^\vee-\Nbund$,
    \item $\ind'$ is the index class of $T^{1/2}X_\w^{\vi',\vi''}$,
    \item $\tau^*\mathscr{U}_{(X^\vi_\w)^{A}}$ and  $\tau^*\mathscr{U}_{(X_\w^{\vi',\vi''})^{A}}$ stand for $\tau(-h\det\ind )^*\mathscr{U}_{(X^\vi_\w)^{A}}$ and $\tau(-h \det\ind')^*\mathscr{U}_{(X_\w^{\vi',\vi''})^{A}}$.
\end{itemize}
\end{thm}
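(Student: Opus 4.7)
The proof plan is to construct the stable envelope $\Stab{C}{F}$ on $X^\vi_\w$ as the composition going around the diagram \eqref{stable envelopes }, then invoke uniqueness. That is, I set
\[
\widetilde{\Stab{C}{F}}:=\pi!\circ j_+^*\circ (j_-!)^{-1}\circ \lambda^*\Stab{C}{F^{\vi',\vi''}}\circ j^{A}_-!\circ ((j_+^{A})^*)^{-1} \circ (\pi^A!)^{-1}
\]
and verify that $\widetilde{\Stab{C}{F}}$ satisfies the two axioms of Definition \ref{defn stable envelopes} characterizing the elliptic stable envelope of $X^\vi_\w$ with the standard polarization.

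First I would check that the diagram is well-typed at the level of line bundles. The choice of modified polarization $T^{1/2}X_\w^{\vi',\vi''}=T^{1/2}X^{\vi',\vi'',\text{standard}}_\w+h^{-1}\Nbund^\vee-\Nbund$ is precisely what is needed so that, invoking the projection formula with parts (\ref{class normal j-}) and (4) of Proposition \ref{maps relating naka and abel naka}, the Thom sheaf contributions of $h^{-1}\Nbund^\vee$ and $\Nbund$ cancel exactly the Thom classes of the normal bundle of $j_-$ and of $\ker d\pi$. The universal line bundles and the Kähler shifts $\tau(-h\det\ind)^*$ must also line up: this follows from Lemma \ref{automorphy translate universal} together with the additivity $\ind=\pi!j_+^*(j_-!)^{-1}\ind'$ inherent to the diagram, so that the automorphy factors $\phi(h^{-1},\det\ind)$ glue correctly across the horizontal arrows.

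Next I would verify the support axiom (\ref{Condition 1 stable env}). The maps $\pi$, $j_+$, $j_-$ are $T$-equivariant, so they preserve attracting sets, and Lemma \ref{lemma choice componets above} guarantees that $F^{\vi',\vi''}$ sits over $F$ in the sense that $\pi(F^{\vi',\vi''}\cap \Fl)\subset F$. Given $s$ supported on $F$, the image $((j_+^{A})^*)^{-1}(\pi^A!)^{-1}s$ is supported on $F^{\vi',\vi''}$, hence $\Stab{C}{F^{\vi',\vi''}}$ produces a section supported on $\Att{C}{F^{\vi',\vi''}}^f$, and tracking this forward through $j_+^*$, $(j_-!)^{-1}$ and $\pi!$ lands us in sections of $X^\vi_\w$ supported on $\Att{C}{F}^f$.

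Then for the restriction axiom (\ref{condition 2 stab}), I would restrict $\widetilde{\Stab{C}{F}}(s)$ to $F$ and compare it to $i^*\circ j!\circ m^*(s)$. This is where the second condition $\restr{\Nbund}{F^{\vi',\vi''},<}=0$ in Lemma \ref{lemma choice componets above} is essential: it ensures that the restrictions of $\Nbund$ and $h^{-1}\Nbund^\vee$ split cleanly into $A$-fixed and $A$-moving parts and so contribute trivially to $\ind$ at $F^{\vi',\vi''}$. Combining this with the restriction axiom for $\Stab{C}{F^{\vi',\vi''}}$ and functoriality of $!$ and $^*$ along the restricted L-shaped diagram over $F$, the computation reduces to $i^*\circ j!\circ m^*(s)$, as required.

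The main obstacle I anticipate is the bookkeeping in the first step: matching not only the Thom sheaves and the $\Theta(h)^{\rk(\ind)}$ factors but also the Kähler shifts of the universal line bundles under the non-closed, non-smooth-of-relative-dimension-zero maps $\pi$ and $j_\pm$. Once one is confident that the composition is an arrow of the correct sheaves on $\Base{T}{X_\w^\vi}$, the verification of the two defining axioms is essentially a diagram chase using the normal bundle and kernel identifications of Proposition \ref{maps relating naka and abel naka} and the stability of $F^{\vi',\vi''}$ from Lemma \ref{lemma choice componets above}; uniqueness of the stable envelope then finishes the argument.
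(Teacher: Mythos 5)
Your plan is exactly the approach the paper invokes: the theorem is stated to be "a straightforward modification of \cite[Section 4.3]{aganagic2016elliptic}," where the analogous diagram (with $X_S$ in place of $X^{\vi',\vi''}_\w$) is proved by showing the composite map around the square lands in the correct line bundles, verifying the support and restriction axioms of Definition \ref{defn stable envelopes}, and concluding by uniqueness. Your sketch reproduces this argument; the only quibble is a type mismatch in writing $\ind=\pi!j_+^*(j_-!)^{-1}\ind'$ (the operators act on sections, not K-theory classes---what is actually used is that $\ind'\!\mid_{F^{\vi',\vi''}}$ and $\ind\!\mid_F$ differ by the $\Nbund$-contributions, which vanish on the attracting side by Lemma \ref{lemma choice componets above}), but this does not affect the substance of the argument.
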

Before defining the map $\lambda$ in the label of the right vertical arrow in the diagram, notice that, whereas $\Stab{C}{F}$ is a morphism of sheaves over $\Base{T}{X^{\vi}_\w}=E_T\times (\Pic{T}{X^\vi_\w}\otimes_\Z E)$, $\Stab{C}{F^{\vi',\vi''}}$ is a morphism of sheaves over $\Base{T}{X_\w^{\vi',\vi''}}=E_T\times (\Pic{T}{X_\w^{\vi',\vi''}}\otimes_\Z E)$. This apparent inconsistency is avoided by pulling back $\Stab{C}{F^{\vi',\vi''}}$ by the morphism $\lambda:\Base{T}{X^\vi_\w}\to\Base{T}{X_\w^{\vi',\vi''}}$ induced by the group homomorphism
\begin{equation*}
\begin{tikzcd}
\Pic{T}{X_\w^\vi}\subseteq\text{Char}(G_\vi)\oplus\text{Char}(T)\arrow[r, "i^*\oplus id"]& \text{Char}(G_{\vi',\vi''})\oplus\text{Char}(T)\supseteq \Pic{T}{X_\w^{\vi',\vi''}},
\end{tikzcd}
\end{equation*}where $i^*$ is the functorial map determined by the inclusion $i:G_{\vi',\vi''}\hookrightarrow G_\vi$. 
\begin{remq}
Both the stable envelopes of $X_{\w}^{\vi}$ and $X_{\w}^{\vi',\vi''}$ can be obtained from the stable envelopes of their common abelianization \cite[Section 4.2]{aganagic2016elliptic}
\[
X_S:=\Repn{Q}{\vi}{\w}////^\theta S=\mu_S^{-1}(0)//^\theta S,
\]
i.e. of the symplectic reduction of $\Repn{Q}{\vi}{\w}$ with respect to the action of a maximal torus $S\subset G_{\vi',\vi''}\subset G_{\vi}$, whose moment map is denoted by $\mu_S$. 

In both cases, namely when $X=X_{\w}^{\vi}$ or $X=X_{\w}^{\vi',\vi''}$, the construction is essentially the same as before: firstly, one chooses a Borel subgroup $B$ in $G_{\vi',\vi''}$ (or in $G_{\vi}$ if $X=X_{\w}^{\vi',\vi''}$) containing $S$, with Lie algebra $\Lie(B)=\Lie(S)\oplus\n$ and constructs a diagram of $T$-varieties
\begin{equation}
\label{L-shaped diagram S}
    \begin{tikzcd}
    \Fl_S\arrow[d, "\pi"]\arrow[r, "j_+"] & \widetilde\Fl_S\arrow[r, "j_-"] &X_S \\
    X
    \end{tikzcd}
\end{equation}
where the maps in the diagram satisfy properties analogous to the ones of Theorem \ref{maps relating naka and abel naka}; then, having fixed a component $F$ in $X$ and a chamber $\mathfrak{C}$, we have the analogous of Lemma \ref{lemma choice componets above}:
\begin{lma}
\label{lemma choice componets above 2}
For given choices of Borel subgroup $B$, chamber $\mathfrak{C}$ and fixed component $F\subset X^{A}$, there exists a unique ${A}$-fixed component $F_S$ in $(X_S)^{A}$ such that
\begin{enumerate}
    \item $\pi(F_S\cap \Fl_S)\subset F$
    \item $\restr{\Nbund}{F_S}=\restr{\left(\mu_S^{-1}(0)^{\theta-ss}\times \mathfrak{n}/S\right)}{F_S}$ has no repulsive directions, i.e. $\restr{\Nbund}{{F_S},<}=0$. 
\end{enumerate}
\end{lma}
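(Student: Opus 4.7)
The plan is to mirror the argument of Lemma \ref{lemma choice componets above}, substituting the maximal torus $S$ for $G_{\vi',\vi''}$ and the Borel subgroup $B$ for the parabolic $P$. The argument will be uniform in the two cases: in both $X = X_\w^\vi$ and $X = X_\w^{\vi',\vi''}$, the ambient group $G$ (equal to $G_\vi$ and $G_{\vi',\vi''}$, respectively) acts freely on the relevant $\theta$-semistable locus and contains $S$ as a maximal torus.

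First, the freeness of the $G$-action supplies a compensating homomorphism $\psi_F: A \to G$ for the fixed component $F$, unique up to $G$-conjugation. Since $\psi_F(A)$ lies in some maximal torus of $G$ and all such tori are $G$-conjugate, after replacing $\psi_F$ by a conjugate I may assume that it takes values in $S$. This yields, for each $i \in I$, a weight decomposition $V_i = \chi_{i,1} \oplus \cdots \oplus \chi_{i,\vi_i}$ into one-dimensional $A$-characters, well defined up to the action of the Weyl group $W_G = N_G(S)/S$ on labels. Under this identification, the $A$-weights acting on $\n$ are exactly the characters $\chi_{i,j}\chi_{i,k}^{-1}$ indexed by the pairs $(j,k)$ labelling the positive roots of $(G,B,S)$.

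Next, I choose any cocharacter $\sigma \in \mathfrak{C}$. Because the weights $\chi_{i,j}\chi_{i,k}^{-1}$ of $\Nbund$ appear among the $A$-weights of the normal bundle to $X^A$ via the chain of maps in diagram \eqref{L-shaped diagram S} (exactly as in the proof of Lemma \ref{lemma choice componets above}), the chamber axiom guarantees that none of them vanish on $\sigma$. Within each $V_i$ the characters $\chi_{i,j}$ are therefore pairwise distinguishable on $\sigma$, so I may take $\tau^{(i)} \in \mathfrak{S}_{\vi_i}$ to be the unique permutation that arranges them so that $\langle \sigma, \chi_{i,\tau^{(i)}(j)}\chi_{i,\tau^{(i)}(k)}^{-1}\rangle > 0$ whenever $(j,k)$ labels a positive root of $B$ with $j < k$. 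Setting $\tau = \prod_i \tau^{(i)}$ and $\psi_{F_S} := c_\tau \circ \psi_F$, the corresponding fixed component $F_S \subset X_S$ satisfies condition (1) because $G$-conjugation does not alter the image under the bundle projection $\pi$, and condition (2) by construction of $\tau$.

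For uniqueness I observe that, in contrast to Lemma \ref{lemma choice componets above}, the Weyl group of $S$ within itself is trivial, so distinct permutations of the labels produce distinct fixed components in $X_S$; the non-repulsion condition (2) then pins down $\tau$ completely, and with it $F_S$. The main technical subtlety I anticipate is the verification that each individual weight $\chi_{i,j}\chi_{i,k}^{-1}$ truly belongs to the hyperplane arrangement defining $\mathfrak{C}$, and not merely those weights already present in the tangent bundle to $X$; this reduces to identifying $\Nbund$ as a summand of the normal bundle to $(X_S)^A$ inside $X_S$ and tracing its $A$-weights through $\pi$ along \eqref{L-shaped diagram S}.
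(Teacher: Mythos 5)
Your strategy is the same as the one in the paper: the paper itself does not give a separate proof for this lemma but simply asserts in the surrounding remark that the argument is analogous to that of Lemma \ref{lemma choice componets above}, and your proposal carries out exactly that analogy (compensating map, conjugation into the maximal torus, choice of $\tau$ ordering the weights, etc.). The existence part is sound, and your identification of the point at which one must check that the weights $\chi_{i,j}\chi_{i,k}^{-1}$ actually lie in the hyperplane arrangement $\Delta$ is a reasonable thing to worry about; for Nakajima varieties with $A=A_\w$ this is resolved by the observation earlier in the paper that the geometric and Lie-theoretic chambers coincide.

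The one step I would flag is the uniqueness argument. You write that ``the Weyl group of $S$ within itself is trivial, so distinct permutations of the labels produce distinct fixed components in $X_S$,'' and that condition (2) therefore pins down $\tau$ completely. This is not quite right when some of the characters $\chi_{i,j}$ coincide: two distinct permutations that only swap positions $j,k$ with $\chi_{i,j}=\chi_{i,k}$ give the \emph{same} compensating map $c_\tau\circ\psi_F$, because $\psi_F(a)$ already has equal entries in those slots; consequently they give the same fixed component. So neither claim holds literally. By your own reasoning --- if distinct permutations really did give distinct components and condition (2) did not single out $\tau$ --- uniqueness would fail in the presence of repeated weights. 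The correct argument is that, since $S$ is abelian, fixed components of $X_S$ are in bijection with honest homomorphisms $A\to S$ (not with permutations): the admissible $\tau$ satisfying condition (2) differ only by permutations that fix $\psi_F$, hence all produce the same compensating map and therefore the same $F_S$. This parallels the uniqueness statement the paper gives in the proof of Lemma \ref{lemma choice componets above}, where the $\tau'$'s range over the Weyl group of $G_{\vi',\vi''}$ and yield different maps but the same component. For the same reason, one should also read your ordering condition $\langle\sigma,\chi_{i,\tau^{(i)}(j)}\chi_{i,\tau^{(i)}(k)}^{-1}\rangle>0$ as a non-strict inequality (or equivalently as ``no repelling weight''), since a repeated character produces a trivial weight which is neither attracting nor repelling.
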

\noindent
Finally, one checks that the diagram 
\begin{equation}
\label{diagram abelianization pseudoabelianization}
    \begin{tikzcd}[column sep=huge]
    \Theta(T^{1/2}F)\otimes \tau^*\mathscr{U}_{(X)^{A}}\arrow[rr, "j^{A}_-!\circ ((j_+^{A})^*)^{-1} \circ(\pi^{A}!)^{-1}"]\arrow[d, "\Stab{C}{F}"]& & \Theta(T^{1/2}F_S)\otimes \tau^*\mathscr{U}_{(X_S)^{A}} \arrow[d, "\lambda^*\Stab{C}{F_{S}}"]\\
    \Theta(T^{1/2}X)\otimes \mathscr{U}_{X}\otimes\Theta(h)^{\rk(\ind)} & & \Theta(T^{1/2}X_S)\otimes \mathscr{U}_{X_{S}}\otimes\Theta(h)^{\rk(\ind_S)}\arrow[ll, "\pi!\circ j_+^*\circ (j_-!)^{-1}"]
    \end{tikzcd}
\end{equation}
commutes. 
In this case
\begin{itemize}
    \item $T^{1/2}X_S$ is the polarization $T^{1/2}X_S^{\text{standard}} +h^{-1}\Nbund^\vee-\Nbund$, where
    \[
    \Nbund=\mu_S^{-1}(0)^{\theta-ss}\times \mathfrak{n}/S,
    \]
    \item $\ind_S$ is the index class of $T^{1/2}X_S$,
    \item $\tau^*\mathscr{U}_{(X)^{A}}$ and  $\tau^*\mathscr{U}_{(X_S)^{A}}$ are shortcuts for $\tau(-h\det\ind )^*\mathscr{U}_{(X)^{A}}$ and $\tau(-h \det\ind_S)^*\mathscr{U}_{(X_S)^{A}}$,
    \item the map $\lambda:\Base{T}{X}\to\Base{T}{X_S}$ is induced by the group homomorphism
    \begin{equation*}
    \begin{tikzcd}
    \Pic{T}{X}\subseteq\text{Char}(G)\oplus\text{Char}(T)\arrow[r, "i^*\oplus id"]& \text{Char}(S)\oplus\text{Char}(T)\supseteq \Pic{T}{X_S},
    \end{tikzcd}
    \end{equation*}
    where $G=G_\vi$ if $X=X^{\vi}_\w$, $G=G_{\vi',\vi''}$ if $X=X^{\vi',\vi''}_\w$ and $i^*$ is the functorial map determined by the inclusion $i:S\hookrightarrow G$.
\end{itemize}
 Notice that this argument can be also used to show the existence stable envelopes of $X_\w^{\vi',\vi''}$ in complete analogy to the proof of the existence of stable envelopes for a Nakajima variety given in \cite[Section 4.3]{aganagic2016elliptic}.
\end{remq}

%%%%%%%%%%%%%%%%%%%%%%%%%%%%
%%%%%%%%%%%%%%%%%%%%%%%%%%%%

\section{Embeddings of Nakajima varieties}

\label{Section 4}
\label{subsection embedding nakajima varieties}

The choice of a subgroup $G_{\vi',\vi''}\subset G_{\vi}$ isomoprhic to $G_{\vi'}\times G_{\vi''}$ induces a splitting of the $I$-graded vector space $V$ into the a direct sum $V=V'\oplus V''$. Choose also a splitting of the $I$-graded $A_\w$-module $W$ in graded submodules $W'$ and $W''$ such that the Nakajima varieties $\naka_Q(\vi',\w')$ and $\naka_Q(\vi'',\w'')$ are non-empty.
Associated to these splittings, consider the diagonal inclusion
\begin{equation}
    \label{diagonal inclusion preimages}
    T^*\Repn{Q}{\vi'}{\w'}\times T^*\Repn{Q}{\vi''}{\w''}\to T^*\Repn{Q}{\vi}{\w}
\end{equation}
sending a pair of representations $(x',x'')$ to their direct sum $(x'\oplus x'')$. More explicitly, the restriction of  this map to $\Hom(V_i',V_j')\times \Hom(V_i'',V_j'')\subset T^*\Repn{Q}{\vi'}{\w'}\times T^*\Repn{Q}{\vi''}{\w''}$ sends a pair of linear maps $(x'_{ij},x''_{ij})$ to the block-diagonal map 
\[
x'_{ij}\oplus x''_{ij}=\begin{pmatrix}
x'_{ij} & 0\\
0 & x''_{ij}
\end{pmatrix}\in \Hom(V_i'\oplus V_i'',V_j'\oplus V_j''),
\]
and is defined similarly on $\Hom(V'_i,W'_i)\times \Hom(V''_i, W''_i)$ and $\Hom(W'_i, V'_i)\times \Hom(W''_i, V''_i)$ for all $i\in I$. 

The next lemma follows straightforwardly from the definitions:
\begin{lma}
\label{commuting square inclusion naka}
The following diagram commutes:
\begin{equation*}
    \begin{tikzcd}
T^*\Repn{Q}{\vi'}{\w'}\times T^*\Repn{Q}{\vi''}{\w''} \arrow[d, "\mu_{\vi'}\times \mu_{\vi''}"]\arrow[r] &T^*\Repn{Q}{\vi}{\w}\arrow[d, "\mu_\vi"]\\
\g^*_{\vi'}\times \g^*_{\vi''}& \g^*_\vi \arrow[l, "(di)^*"]
\end{tikzcd}
\end{equation*}
\end{lma}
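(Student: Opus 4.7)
The plan is to deduce commutativity from the universal characterisation of the moment map, using that the diagonal inclusion \eqref{diagonal inclusion preimages} is both symplectic and equivariant. No serious obstacle arises; the argument is essentially formal once these two structural features are recorded.

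First, I would note two properties of the horizontal map
\[
\iota:T^*\Repn{Q}{\vi'}{\w'}\times T^*\Repn{Q}{\vi''}{\w''}\to T^*\Repn{Q}{\vi}{\w}.
\]
On the one hand, $\iota$ is a symplectic embedding: under the decompositions $V=V'\oplus V''$ and $W=W'\oplus W''$, the canonical symplectic form on $T^*\Repn{Q}{\vi}{\w}$ restricts to the direct sum of the canonical forms on the two factors, since the pairing of a $\Hom$-space with its dual splits into the two block-diagonal pairings plus two off-diagonal pairings, and the latter vanish on elements in the image of $\iota$. On the other hand, $\iota$ is equivariant with respect to the inclusion $i:G_{\vi'}\times G_{\vi''}\hookrightarrow G_\vi$ into block-diagonal matrices, since block-diagonal group elements act block-diagonally on block-diagonal representations.

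Second, I would invoke the characterisation of moment maps for a linear Hamiltonian action, $\langle\mu_\vi(x),\xi\rangle=\tfrac{1}{2}\omega(\xi\cdot x,x)$ for $\xi\in\g_\vi$, and analogously for $\mu_{\vi'}\times\mu_{\vi''}$. Combining this with the two facts above, for $(x',x'')$ in the domain and $(\xi',\xi'')\in\g_{\vi'}\oplus\g_{\vi''}$ one computes
\[
\langle(di)^*\mu_\vi(\iota(x',x'')),(\xi',\xi'')\rangle=\tfrac{1}{2}\omega\bigl(di(\xi',\xi'')\cdot\iota(x',x''),\iota(x',x'')\bigr)=\tfrac{1}{2}\omega'(\xi'\cdot x',x')+\tfrac{1}{2}\omega''(\xi''\cdot x'',x''),
\]
which is precisely $\langle(\mu_{\vi'}(x'),\mu_{\vi''}(x'')),(\xi',\xi'')\rangle$. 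Since $(\xi',\xi'')$ is arbitrary, the two compositions agree.

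Alternatively, one can verify the identity directly on the explicit formula for $\mu_\vi$ (a sum of commutators along edges of $Q$ plus framing terms $j_ii_i$): the block-diagonal structure of $\iota(x',x'')$ ensures that the $(\vi',\vi')$ and $(\vi'',\vi'')$ diagonal blocks of $\mu_\vi(\iota(x',x''))$ coincide with $\mu_{\vi'}(x')$ and $\mu_{\vi''}(x'')$, while the off-diagonal blocks, which are annihilated by $(di)^*$, play no role. Either route yields the commuting square.
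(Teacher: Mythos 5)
Your argument is correct, and in fact it supplies exactly the verification that the paper omits: the paper states that this lemma ``follows straightforwardly from the definitions'' and gives no proof at all. Both routes you sketch — the abstract one via the characterisation $\langle\mu(x),\xi\rangle=\tfrac{1}{2}\omega(\xi\cdot x,x)$ of the moment map for a linear Hamiltonian action combined with the facts that $\iota$ is a symplectic embedding and is $i$-equivariant, and the concrete one via block-decomposing the explicit moment map $\sum[x_e,y_e]+\sum j_i i_i$ and observing that $(di)^*$ kills the off-diagonal blocks — are valid, and either would serve as the ``straightforward'' check the author has in mind. One small stylistic point: since $\iota$ is a linear map, the infinitesimal equivariance you use, namely $di(\xi',\xi'')\cdot\iota(x',x'')=\iota(\xi'\cdot x',\xi''\cdot x'')$, is immediate, but it is worth stating explicitly (as you implicitly do) that this is the differentiated form of the group-level equivariance, since that is the precise bridge between the two facts you recorded in your first paragraph.
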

Commutativity of the previous diagram gives a closed $G_{\vi',\vi''}$-equivariant closed embedding 
\[
\mu^{-1}_{\vi'}(0)\times \mu^{-1}_{\vi''}(0)\hookrightarrow \mu^{-1}_{\vi',\vi''}(0).
\]
\begin{lma}
Let $\theta\in\Char( G_{\vi',\vi''})$, and let $\theta'$ and $\theta''$ be its restrictions to $\Char(G_{\vi'})$ and $\Char(G_{\vi''})$ respectively. Then the previous $G_{\vi',\vi''}$-equivariant embedding restricts to an embedding of the semistable locus:
\begin{equation}
\label{diagonal embedding inclusion}
\mu^{-1}_{\vi'}(0)^{\theta'-ss}\times \mu^{-1}_{\vi''}(0)^{\theta''-ss}\hookrightarrow \mu^{-1}_{\vi',\vi''}(0)^{\theta-ss}
\end{equation}
\end{lma}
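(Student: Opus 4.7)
The plan is to verify the inclusion pointwise by means of the Hilbert–Mumford numerical criterion for $\theta$-semistability. Recall that a point $y$ of $\mu^{-1}_{\vi',\vi''}(0)$ is $\theta$-semistable precisely when, for every one-parameter subgroup $\lambda:\Ci^\times\to G_{\vi',\vi''}$ such that the limit $\lim_{t\to 0}\lambda(t)\cdot y$ exists in $\mu^{-1}_{\vi',\vi''}(0)$, the pairing $\langle\theta,\lambda\rangle$ is nonnegative; and analogously with $\theta'$ or $\theta''$ replacing $\theta$ for the two factor groups.

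Pick any point $(x',x'')\in \mu^{-1}_{\vi'}(0)^{\theta'-ss}\times\mu^{-1}_{\vi''}(0)^{\theta''-ss}$ and let $y=x'\oplus x''$ denote its image under the diagonal embedding \eqref{diagonal inclusion preimages}. I would first observe that, via the fixed isomorphism $G_{\vi',\vi''}\cong G_{\vi'}\times G_{\vi''}$, every one-parameter subgroup $\lambda$ of $G_{\vi',\vi''}$ decomposes uniquely as a pair $\lambda=(\lambda',\lambda'')$ with $\lambda'\in\mathrm{Cochar}(G_{\vi'})$ and $\lambda''\in\mathrm{Cochar}(G_{\vi''})$, and that under this identification the action of $\lambda(t)$ on $y=x'\oplus x''$ is block-diagonal, namely $\lambda(t)\cdot y=(\lambda'(t)\cdot x')\oplus(\lambda''(t)\cdot x'')$, because the diagonal embedding is $G_{\vi',\vi''}$-equivariant.

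The second step is to note that, because the image of \eqref{diagonal inclusion preimages} is closed in $T^*\Repn{Q}{\vi}{\w}$ and $G_{\vi',\vi''}$-stable, the limit $\lim_{t\to 0}\lambda(t)\cdot y$ exists in $\mu^{-1}_{\vi',\vi''}(0)$ if and only if both component limits $\lim_{t\to 0}\lambda'(t)\cdot x'$ and $\lim_{t\to 0}\lambda''(t)\cdot x''$ exist (the block-diagonal splitting of the action makes existence of the total limit equivalent to existence of each factor limit). Finally, since $\theta'$ and $\theta''$ are the restrictions of $\theta$ to the two factors, the pairing splits additively,
\[
\langle\theta,\lambda\rangle=\langle\theta',\lambda'\rangle+\langle\theta'',\lambda''\rangle,
\]
and both summands are nonnegative by the $\theta'$- and $\theta''$-semistability of $x'$ and $x''$. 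Hence $\langle\theta,\lambda\rangle\geq 0$ for every destabilizing candidate $\lambda$, which by Hilbert–Mumford means $y$ lies in $\mu^{-1}_{\vi',\vi''}(0)^{\theta-ss}$, proving the lemma.

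The only real subtlety, and the place where I expect the argument to require a little care, is the step identifying one-parameter subgroups of $G_{\vi',\vi''}$ with pairs $(\lambda',\lambda'')$ compatibly with limits; this relies on the fact that the diagonal embedding maps the prequotient inside $T^*\Repn{Q}{\vi}{\w}$ as a closed $G_{\vi',\vi''}$-invariant subvariety, so that limits taken in the ambient space automatically lie in the image whenever each factor limit is defined. Once this is in place, the Hilbert–Mumford argument above is routine.
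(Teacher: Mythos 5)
Your proof is correct and follows essentially the same route as the paper's: both apply King's (Hilbert--Mumford) criterion after decomposing a one-parameter subgroup of $G_{\vi',\vi''}$ into a pair $(\lambda',\lambda'')$ and observing that the diagonal embedding intertwines limits block-diagonally; the paper phrases the argument by contradiction using the auxiliary coordinate $z$ in $V\times\Ci$, whereas you invoke the numerical pairing $\langle\theta,\lambda\rangle=\langle\theta',\lambda'\rangle+\langle\theta'',\lambda''\rangle$ directly, but these are the same criterion in two standard formulations and the content of the argument is identical.
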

\begin{proof}
Given $(p',p'')\in \mu^{-1}_{\vi'}(0)^{\theta'-ss}\times \mu^{-1}_{\vi''}(0)^{\theta''-ss}$, we show that $p'\oplus p'' \in \mu^{-1}_{\vi',\vi''}(0)$ is semistable exploiting King's characterization of semistability \cite{king}. Assume by contradiction that $p'\oplus p''$ is not semistable. This means that there exist a cocharacter $\lambda\in \text{Cochar}(G_{\vi'}\times G_{\vi''})$ such that $\lim_{t\to 0^+}\lambda(t)(p'\oplus p'',z)=(p_0'\oplus p_0'',0)$ for every $z\neq 0$. The cocharacter $\lambda$ is the same as a pair of cocharacters $(\lambda',\lambda'')\in\text{Cochar}(G_{\vi'})\times\text{Cochar}(G_{\vi''})$. Under this identification, we have \[\lambda(t)(p'\oplus p'',z)=(\lambda'(t)p'\oplus \lambda''(t)p'', \chi(\lambda'(t))\chi(\lambda''(t))z).\] The existence of the limit implies in particular that at least one monomial between $\chi(\lambda'(t))$ and $\chi(\lambda''(t))$ is a positive power of $t$. Without loss of generality, we can assume that it is $\chi(\lambda'(t))$. This means that $\lim_{t\to 0^+} \lambda'(t)(p',z)=(p'_0,0)$, i.e. that $p'$ is not semistable, which contradicts the assumption. 
\end{proof}
Taking the quotient by $G_{\vi',\vi''}$, the inclusion $\mu^{-1}_{\vi'}(0)^{\theta'-ss}\times \mu^{-1}_{\vi''}(0)^{\theta''-ss}\hookrightarrow \mu^{-1}_{\vi',\vi''}(0)^{\theta-ss}$ gives a smooth embedding
\begin{equation}
\label{embedding product naka}
    j:X_{\w'}^{\vi'}\times X_{\w''}^{\vi''}\hookrightarrow X_\w^{\vi',\vi''}.
\end{equation}

\begin{eg}
\label{grassmannian case}
Consider the Nakajima variety $T^*\Gr{v}{w}$ with its usual $A=(\Ci^\times)^{w}$-action. In this case, every $G_{\vi',\vi''}$-refinement is of the form $T^*\Gr{v'}{w}\times T^*\Gr{v''}{w}$ with $v'+v''=v$ and the embeddings $X_{\w'}^{\vi'}\times X_{\w''}^{\vi''}\hookrightarrow X_\w^{\vi',\vi''}$ are the embeddings
\[
T^*\Gr{v'}{w'}\times T^*\Gr{v''}{w''}\hookrightarrow T^*\Gr{v'}{w}\times T^*\Gr{v''}{w}
\]
associated to choices of $w'$ and $w''$-dimensional coordinate  subspaces of $W=\Ci^w$.
\end{eg}

\begin{remq}
\label{Emebdded nakajima varieties as torus-fixed component}
The subvariety $X_{\w'}^{\vi'}\times X_{\w''}^{\vi''}$ can be also seen as a fixed component of the action of the one dimensional subgroup $\Ci^\times\subset A_\w$ acting on $W_i'$ with weight one and trivially on $W_i''$ for all $i\in I$. Indeed, since the action of $G_{\vi',\vi''}$ on the prequotient $\mu_{\vi',\vi''}^{-1}(0)^{\theta-ss}$ is free, associated to each fixed component is a compensating map
\[
\psi:\Ci^\times \to G_{\vi',\vi''}
\]
such that the preimage of a fixed component in $X_\w^{\vi',\vi''}$ is fixed by the action of the points $(a,\psi(a))\in \Ci^\times \times G_{\vi}$. It is easy to see that $\psi$ induces a weight decomposition of $V_i=\widetilde V_i'\oplus \widetilde V_i''$ such that $\Ci^\times$ acts on $\widetilde V_i'$ with weight one and trivially on $\widetilde V_i''$. Moreover, the preimages of fixed points are exactly those representations of $Q$ that are in the image of the diagonal inclusion \eqref{diagonal inclusion preimages},
determined by the splittings $V_i=\widetilde V_i'\oplus \widetilde V_i''$ and $W=W_i'\oplus W_i''$. Therefore, the subvariety $X_{\w'}^{\vi'}\times X_{\w''}^{\vi''}\subset X_\w^{\vi',\vi''}$ is the connected component corresponding to the splitting $V_i'=\widetilde V_i'$ and $V_i''=\widetilde V_i''$ for all $i\in I$. Notice that, since there are many possible different splittings of $V$ induced by the action of $\Ci^\times$, in general $\Ci^\times$-fixed connected components of $X_\w^{\vi',\vi''}$ are not products of Nakajima varieties.
\end{remq}
\begin{remq}
\label{remark equivariance j}
The previous remark also shows that the group $T=A_\w \times B \times \Ci_h^\times$ acts on $X_{\w'}^{\vi'}\times X_{\w''}^{\vi''}\subset X_\w^{\vi',\vi''}$ and there is a one dimensional torus fixing it. Moreover, this $T$-action on $X_{\w'}^{\vi'}\times X_{\w''}^{\vi''}$ coincides with the action of $T =T'\times_{(B \times\Ci^\times)}T''$ induced on the Cartesian product $X_\w^{\vi'}\times X_\w^{\vi''}$ by the actions of the subgroups $T'=A_{\w'}\times B \times\Ci^\times$ and $T''= A_{\w''}\times B \times\Ci^\times$ on  the Nakajima varieties $X^{\vi'}_{\w'}$ and $X_{\w''}^{\vi''}$ respectively. As a consequence, we can define an action of $T$ on $X_{\w'}^{\vi'}\times X_{\w''}^{\vi''}$ that makes the embedding $j$ equivariant.  From now on, we will always look at $j$ as a $T$-equivariant embedding.
\end{remq}

We end this section by listing a few properties of the normal bundle of the inclusion $j$ and of the attracting set of $X^{\vi'}_{\w'}\times X^{\vi'}_{\w'}\subset X^{\vi',\vi''}_{\w}$. 

\begin{lma}
\label{lemma normal bundles}
The following holds:
\begin{enumerate}
\item The class of the normal bundle $\mathscr{N}_j$ in $K_T(X_\w^{\vi'}\times X^{\vi''}_{\w''})$ is isomorphic to \[
\mathscr{M}(\V',\V'',\W',\W'')+h^{-1}\mathscr{M}(\V',\V'',\W',\W'')^\vee\in K_T(X_{\w'}^{\vi'}\times X_{\w''}^{\vi''}),\] where $\mathscr{M}(\V',\V'',\W',\W'')$ is the class
\begin{equation}
\label{class M}
\bigoplus_{i\in I}\left(\Hom(\V_i', \W_i'')\oplus h^{-1}\Hom(\W_i', \V_i'')\right)\bigoplus_{e\in Q}\left( \Hom(\V_{t(e)}', \V_{h(e)}'')\oplus h^{-1}\Hom(\V_{h(e)}', \V_{t(e)}'')\right).
\end{equation}
%\item The stabilizer of $X_{\w'}^{\vi'}\times X^{\vi''}_{\w''}\subset X^{\vi',\vi''}_{\w}$ with respect of the action of $A=A_\w\times B\times \Ci^\times_h$is a torus of the form $\C \times B\times \Ci^\times_h$, where $Cs$ is a two dimensional subtorus of $\A_w$.
\item Let $ \overline{\mathfrak{C}}$ be a chamber of the torus $\Ci^\times $ fixing $X^{\vi'}_{\w'}\times X^{\vi''}_{\w''}\subset X^{\vi',\vi''}_{\w}$ as in Remark \ref{Emebdded nakajima varieties as torus-fixed component}. The normal bundle $\mathscr{N}_{j}$ splits as the direct sum of vector bundles
\[
\mathscr{N}_{j}=\mathscr{N}_{j}^>\oplus \mathscr{N}_{j}^<
\]
such that $\mathscr{N}_{j}^>$ is the bundle whose fibers are attracting with respect to $\overline{\mathfrak{C}}$ and $\mathscr{N}_{j}^<=h^{-1}(\mathscr{N}_{j}^>)^\vee$.
\item The normal bundle $\mathscr{N}_{\hat j}$ of the attracting manifold $\Att{\overline{C}}{X_{\w'}^{\vi'}\times X_{\w''}^{\vi''}}$
\[\begin{tikzcd}
X_{\w'}^{\vi'}\times X_{\w''}^{\vi''} \arrow[d, hookrightarrow] \arrow[r,"j", hookrightarrow] & X_\w^{\vi',\vi''}\\
\Att{\mathfrak{\overline C}}{X_{\w'}^{\vi'}\times X_{\w''}^{\vi''}}\arrow[ur, swap, "\hat j", hookrightarrow]&
\end{tikzcd}
\]
satisfies
\[
\restr{\mathscr{N}_{\hat j}}{X_{\w'}^{\vi'}\times X_{\w'}^{\vi''}}=\mathscr{N}_{j}^<.
\]
\item \label{induced polarization}Let $T^{1/2}X_{\w'}^{\vi'}$ and $T^{1/2}X_{\w''}^{\vi''}$ be standard polarizations of  $X_{\w'}^{\vi'}$ and $X^{\vi''}_{\w''}$, defined in \eqref{standard polarization naka}. Replacing the tautological bundles $\V_i'$, $\V_i''$, $\W_i'$ and $\W_i''$ on $X_{\w'}^{\vi'}$ and $X_{\w''}^{\vi''}$ in their definition with the tautological bundles $\aV_i'$, $\aV_i''$, $\aW_i'$ and $\aW_i''$ on $X^{\vi',\vi''}_\w$ defined in Section \ref{Tautological bundles}, we produce classes in $K_T(X^{\vi',\vi''}_\w)$, which we denote in the same way. With this abuse of notation, the classes
\begin{align*}
    & T^{1/2}X_{\w'}^{\vi'}+T^{1/2}X_{\w''}^{\vi''}+ \mathscr{M}(\aV',\aV'',\aW',\aW'')\\
    & T^{1/2}X_{\w'}^{\vi'}+T^{1/2}X_{\w''}^{\vi''}+ h^{-1}\mathscr{M}(\aV',\aV'',\aW',\aW'')^\vee
\end{align*}
are both polarizations of $X_\w^{\vi',\vi''}$.
\end{enumerate}
\end{lma}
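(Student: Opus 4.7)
The plan is to prove all four parts by direct computation in equivariant $K$-theory, using the standard fact that for a smooth Nakajima-type variety $T^*\Repn{Q}{\vi}{\w}////G$ with $G$ acting freely on the $\theta$-stable locus, the tangent bundle is
\[
TX = \Repn{Q}{\V}{\W} + h^{-1}\Repn{Q}{\V}{\W}^\vee - \g_\V - h^{-1}\g_\V^\vee
\]
in $K_T(X)$, equivalent to saying that $\Repn{Q}{\V}{\W} - \g_\V$ is a polarization.

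For (1), I would apply this formula to both $X^{\vi',\vi''}_\w$ and to the product $X^{\vi'}_{\w'}\times X^{\vi''}_{\w''}$ and compute $[\mathscr{N}_j] = j^*[TX^{\vi',\vi''}_\w] - [T(X^{\vi'}_{\w'}\times X^{\vi''}_{\w''})]$. Under $j$ the tautological bundles decompose as $j^*\aV_i = \V'_i\oplus \V''_i$ and $j^*\aW_i = \W'_i\oplus \W''_i$ (Section \ref{Tautological bundles}), and $j^*\g_{\aV',\aV''}$ matches the Lie-algebra term $\g_{\V'}+\g_{\V''}$ of the product. Everything then cancels except the cross-terms arising from expanding $\Repn{Q}{\V'\oplus\V''}{\W'\oplus\W''}$ and its $h^{-1}$-dual; the bookkeeping step to get from these cross-terms to the expression $\mathscr{M}+h^{-1}\mathscr{M}^\vee$ as defined in \eqref{class M} uses the elementary identity $\Hom(A,B)^\vee = \Hom(B,A)$ to redistribute the $h^{-1}$ factors. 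This matching, which is precisely what motivates the asymmetric $h^{-1}$-placement in the definition of $\mathscr{M}$, is the one genuinely technical point in the proof; everything else is routine.

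Parts (2) and (3) then follow cleanly. By Remark \ref{Emebdded nakajima varieties as torus-fixed component} the fixed locus is cut out by a one-dimensional $\Ci^\times\subset A_\w$ acting with weight $1$ on $\W'_i$ and $\aV'_i$ and weight $0$ on $\W''_i$ and $\aV''_i$, and a weight-by-weight inspection shows every summand of $\mathscr{M}$ has $\Ci^\times$-weight $-1$ while every summand of $h^{-1}\mathscr{M}^\vee$ has weight $+1$; choosing $\overline{\mathfrak{C}}$ to be the positive chamber therefore gives $\mathscr{N}_j^> = h^{-1}\mathscr{M}^\vee$ and $\mathscr{N}_j^< = \mathscr{M}$, and the identity $\mathscr{N}_j^< = h^{-1}(\mathscr{N}_j^>)^\vee$ is the immediate algebraic check $\mathscr{M} = h^{-1}(h^{-1}\mathscr{M}^\vee)^\vee$, also consistent with the ambient $h^{-1}$-twisted symplectic form pairing attracting and repelling directions nondegenerately. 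For (3), Bialynicki--Birula gives that the attracting manifold is an affine bundle over the fixed locus with fiber $\mathscr{N}_j^>$, so its tangent bundle along the zero section is $T(X^{\vi'}_{\w'}\times X^{\vi''}_{\w''}) + \mathscr{N}_j^>$; subtracting this from $j^*TX^{\vi',\vi''}_\w$ leaves $\mathscr{N}_j^<$, as claimed.

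Finally, for (4) I check the polarization identity $P + h^{-1}P^\vee = TX^{\vi',\vi''}_\w$ for both candidate classes. With $P_1 = T^{1/2}X^{\vi'}_{\w'} + T^{1/2}X^{\vi''}_{\w''} + \mathscr{M}$ (using the tautological substitution $\V\to\aV$, $\W\to\aW$ from the statement), one has
\[
P_1 + h^{-1}P_1^\vee = \bigl(TX^{\vi'}_{\w'} + TX^{\vi''}_{\w''}\bigr) + \bigl(\mathscr{M} + h^{-1}\mathscr{M}^\vee\bigr),
\]
using that each $T^{1/2}X^{\vi'}_{\w'}$ and $T^{1/2}X^{\vi''}_{\w''}$ is a polarization of its respective factor; by (1) the right-hand side reassembles exactly into $\Repn{Q}{\aV}{\aW} + h^{-1}\Repn{Q}{\aV}{\aW}^\vee - \g_{\aV',\aV''} - h^{-1}\g_{\aV',\aV''}^\vee = TX^{\vi',\vi''}_\w$. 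The candidate $P_2 = T^{1/2}X^{\vi'}_{\w'} + T^{1/2}X^{\vi''}_{\w''} + h^{-1}\mathscr{M}^\vee$ is handled identically.
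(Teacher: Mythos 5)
Your proof is correct and follows essentially the same route as the paper: part (1) by comparing the tangent-bundle classes of $X_\w^{\vi',\vi''}$ and $X_{\w'}^{\vi'}\times X_{\w''}^{\vi''}$ via the restrictions $j^*\aV_i=\V'_i\oplus\V''_i$, $j^*\aW_i=\W'_i\oplus\W''_i$; parts (2)--(3) by reading off the $\Ci^\times$-weights and invoking Bialynicki--Birula; part (4) by checking $P+h^{-1}P^\vee=TX_\w^{\vi',\vi''}$. The paper leaves these steps as ``follows from the explicit formulas,'' ``follows straightforwardly,'' and ``an easy computation''; you have unpacked them, and the weight bookkeeping (all summands of $\mathscr{M}$ carry $\Ci^\times$-weight $-1$, so for the positive chamber $\mathscr{N}_j^<=\mathscr{M}$, $\mathscr{N}_j^>=h^{-1}\mathscr{M}^\vee$, with $\mathscr{M}=h^{-1}(h^{-1}\mathscr{M}^\vee)^\vee$) is accurate.
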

\begin{proof}
Recall the definitions of the tautological bundles in Section \ref{Tautological bundles}.
The first claim follows from the explicit formulas for the classes $TX^{\vi',\vi''}_\w$, $TX^{\vi'}_{\w'}$ and $TX^{\vi'}_{\w'}$ coming from \eqref{standard polarization naka} and \eqref{standard polarization refinements}, from the short exact sequence of the normal bundle $\mathcal{N}_j$ and the fact that
\begin{align*}
    &j^*\aV=j^*(\aV'\oplus \aV'')=\V'\oplus\V''\\
    &j^*\aW=j^*(\aW'\oplus \aW'')=\W'\oplus\W''.
\end{align*}

The second and third points follow straightforwardly from the fact that $X^{\vi'}_{\w'}\times X^{\vi''}_{\w''}$ is a $\Ci^\times$-fixed component and from the definition of $\Att{\overline{ 
\mathfrak{C}}}{X_{\w'}^{\vi'}\times X_{\w''}^{\vi''}}$. Finally, the fourth is an easy computation.
\end{proof}
\begin{remq}
\label{remark extension bundles}
Notice that the vector bundles $\mathscr{M}(\V',\V'',\W',\W'')$ and  $h^{-1} \mathscr{M}(\V',\V'',\W',\W'')^\vee$ extend to bundles on the whole $X^{\vi',\vi''}_\w$ by formula \eqref{class M}.
Moreover, $\mathcal{N}_{\hat j}$ is isomorphic to the restriction of one of these two vector bundles, depending on the choice of the chamber $\overline{\mathfrak{C}}$. Whenever such a choice of chamber is fixed, we will identify $\mathcal{N}_{\hat j}$ with the corresponding vector bundle on $X^{\vi',\vi''}_\w$. This will help to highlight the geometric meaning of the formulas. 
\end{remq}

%%%%%%%%%%%%%%%%%%%%%%%%%%%%
%%%%%%%%%%%%%%%%%%%%%%%%%%%%

\section{Main technical result}
\label{Section 5}

The goal of this section is to produce, under certain conditions, stable envelopes of the $T$-variety  $X_{\w}^{\vi',\vi''}$ from the ones of $X_{\w'}^{\vi'}$ and $X_{\w''}^{\vi''}$.
The basic idea is to use a chain of maps to relate the stable envelopes of $X_{\w}^{\vi',\vi''}$ to the ones of the subvariety $X_{\w'}^{\vi'}\times X_{\w''}^{\vi''}$, which can be readily obtained from the ones of the $T'$-variety $X_{\w'}^{\vi'}$ and the $T''$-variety $X_{\w''}^{\vi''}$.
\subsection{Tensor product of stable envelopes}
Recall that stable envelopes of a $T$-variety $X$ are morphisms of certain sheaves over the base $\Base{X}{T}=E_T\times \Epic{T}{X}$. As a consequence, to relate stable envelopes of $X_{\w}^{\vi',\vi''}$, $X_{\w'}^{\vi'}$, and $X_{\w''}^{\vi''}$ we need a morphism
\begin{equation}
\label{morphism needed}
    \Base{X_{\w}^{\vi',\vi''}}{T}\to \Base{X_{\w'}^{\vi'}}{T'}\times \Base{X_{\w''}^{\vi''}}{T''}
\end{equation}
We now build this morphism by defining maps
\begin{align*}
    &\phi:E_T\to E_{T'}\times E_{T''}\\
    &\psi:\Epic{T}{X_{\w}^{\vi',\vi''}}\to \Epic{T}{X_{\w'}^{\vi'}}\times \Epic{T}{X_{\w''}^{\vi''}}.
\end{align*}
The morphism $\phi$ is simply defined functorially from the quotients $T\to T/A_{\w''}=T'$ and $T\to T/A_{\w'}=T''$.
As for $\psi$, we begin with the following lemma:
\begin{lma}
The homomorphism
\[
\begin{tikzcd}
\Pic{}{X_{\w}^{\vi',\vi''}}\arrow[r, "j^*"]& \Pic{}{X_{\w'}^{\vi'}\times X_{\w''}^{\vi''}}\arrow[rr, "(pr_1^*\times pr_2^*)^{-1}"] && \Pic{}{X_{\w'}^{\vi'}}\times\Pic{}{X_{\w''}^{\vi''}}
\end{tikzcd}
\]
is surjective and sends a generator $\det(\aV'_i\oplus \aV''_j)$ to a pair of generators $(\det\V_i, \det\V_j)$.
\end{lma}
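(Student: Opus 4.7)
The plan is to reduce the statement to an explicit calculation of $j^{*}$ on the natural generators of the Picard group of the refinement.

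First, I would identify the generators of $\Pic{}{X_{\w}^{\vi',\vi''}}$. The refinement $X_{\w}^{\vi',\vi''}$ is itself a GIT symplectic quotient of an affine space by the reductive group $G_{\vi',\vi''}\cong G_{\vi'}\times G_{\vi''}$, whose character group is freely generated by the determinant characters $\det\GL(\vi'_i)$ and $\det\GL(\vi''_i)$, $i\in I$. Applying the same Picard-group description quoted from \cite{Knop_Picard} for ordinary Nakajima varieties (which carries over verbatim to this setting), the group $\Pic{}{X_{\w}^{\vi',\vi''}}$ is generated (modulo isomorphism) by the line bundles $\det\aV'_i$ and $\det\aV''_i$ coming from the splitting \eqref{splitting vector bundles}.

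Next, I would compute the action of $j^{*}$ on these generators. The embedding $j$ of \eqref{embedding product naka} is induced on GIT quotients by the diagonal inclusion \eqref{diagonal embedding inclusion} of prequotients, which sends a pair of representations $(x',x'')$ to the block-diagonal representation $x'\oplus x''$; under this inclusion the $G_{\vi',\vi''}$-representation $V'_i$ restricts to the $(G_{\vi'}\times G_{\vi''})$-representation $V_i$ of the first factor on which the second factor acts trivially, and symmetrically for $V''_i$. Taking associated bundles and passing to the quotient gives
\[
j^{*}\aV'_i \;\cong\; pr_1^{*}\V_i, \qquad j^{*}\aV''_i \;\cong\; pr_2^{*}\V_i.
\]
Consequently, under the composition displayed in the lemma we have
\[
\det\aV'_i \;\longmapsto\; (\det\V_i,\, 0), \qquad \det\aV''_i \;\longmapsto\; (0,\, \det\V_i),
\]
and more generally $\det(\aV'_i\oplus\aV''_j)\mapsto(\det\V_i,\det\V_j)$, which is the asserted formula.

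Finally, surjectivity is immediate: the images listed above exhaust a generating set of $\Pic{}{X^{\vi'}_{\w'}}\times\Pic{}{X^{\vi''}_{\w''}}$, since by \cite{Knop_Picard} each factor is freely generated by the determinants $\det\V_i$ of the tautological bundles. The only subtle step in this argument is the identification of $j^{*}\aV'_i$ with $pr_1^{*}\V_i$ (and similarly for $\aV''_i$), but this is a direct consequence of the construction of associated bundles on GIT quotients together with the explicit form of the diagonal inclusion $(x',x'')\mapsto x'\oplus x''$, so no serious obstacle is expected.
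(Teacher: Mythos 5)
Your proof is correct and follows essentially the same route as the paper: both arguments reduce to the observation that the map $(pr_1^*\times pr_2^*)^{-1}\circ j^*$ identifies with restriction of characters from $G_{\vi',\vi''}$ to $G_{\vi'}\times G_{\vi''}$, i.e.\ that $j^*\aV'_i\cong pr_1^*\V_i$ and $j^*\aV''_i\cong pr_2^*\V_i$, after which surjectivity is immediate. The paper packages this as commutativity of a diagram with vertical surjections $\Char(G)\twoheadrightarrow\Pic{}{-}$, while you make the pullback identification explicit on the generating bundles themselves; the two phrasings are equivalent.
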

\begin{proof}
Both claims readily follows from commutativity of the following diagram
\[
\begin{tikzcd}
\Pic{}{X_{\w}^{\vi',\vi''}}\arrow[r, "j^*"]& \Pic{}{X_{\w'}^{\vi'}\times X_{\w''}^{\vi''}}\arrow[rr, "(pr_1^*\times pr_2^*)^{-1}"] && \Pic{}{X_{\w'}^{\vi'}}\times\Pic{}{X_{\w''}^{\vi''}}\\
\Char(G_{\vi',\vi''})\arrow[u, two heads] \arrow[rrr, "(pr_1^*\times pr_2^*)^{-1}"] & &&\Char(G_{\vi'})\times \Char(G_{\vi''}),\arrow[u, two heads] 
\end{tikzcd}
\]
where the lower horizontal map $(pr_1^*\times pr_2^*)$ is the functorial isomorphism associated to projections
\begin{align*}
&pr_{1}:G_{\vi',\vi''}=\prod_{i\in I}{\GL(V_i')\times\GL(V_i'')}\to \prod_{i\in I}{\GL(V_i')}=G_{\vi'}\\  &pr_{2}:G_{\vi',\vi''}=\prod_{i\in I}{\GL(V_i')\times\GL(V_i'')}\to \prod_{i\in I}{\GL(V_i'')}=G_{\vi''}.
\end{align*}
\end{proof}
\noindent
From the exact sequences 
\[
\begin{tikzcd}
0\arrow[r]&\Char(T) \arrow[r]&\Pic{T}{X^\vi_\w}\arrow[r]& \Pic{}{X_\w^\vi}\arrow[r]&0\\
0\arrow[r]&\Char(T) \arrow[r]&\Pic{T}{X_\w^{\vi',\vi''}}\arrow[r]& \Pic{}{X_\w^{\vi',\vi''}}\arrow[r]&0\\
\end{tikzcd}
\]
we build the following diagram with exact rows:
\begin{equation}
\label{map picards exterior tenso}
  \begin{tikzcd}[column sep=small]
0\arrow[r]& \Char(T)\arrow[r]\arrow[d] &\Pic{T}{X_\w^{\vi',\vi''}}\arrow[r]\arrow[d]& \Pic{}{X_\w^{\vi',\vi''}}\arrow[r]\arrow[d, "(pr_1^*\otimes pr_2^*)^{-1}\circ j^*"]&0 \\
0\arrow[r]& \Char(T')\oplus\Char(T'')\arrow[r] &\Pic{T}{X_{\w'}^{\vi'}}\oplus \Pic{T}{X_{\w''}^{\vi''}}\arrow[r]& \Pic{}{X_{\w'}^{\vi'}}\oplus\Pic{}{X_{\w''}^{\vi''}}\arrow[r]&0.
\end{tikzcd}  
\end{equation}
Here the left vertical map is induced by the inclusions $T'\hookrightarrow T$ and $T''\hookrightarrow T$. Since the outer maps are surjective, it follows from the five lemma that the central map is an surjective too.
Tensoring this map with $E$, we get the sought-after morphism of abelian varieties
\[
\psi:\Epic{T}{X_\w^{\vi',\vi''}}\to \Epic{T}{X_{\w'}^{\vi'}}\times \Epic{T}{X_{\w''}^{\vi''}}.
\]

\begin{defn}
\label{defn tensor product nakajima varieties}
Let $\Stab{C'}{F^{\vi'}}$ and $\Stab{C''}{F^{\vi''}}$ be stable envelopes of $X_{\w'}^{\vi'}$ and $X_{\w''}^{\vi''}$ respectively.
Their exterior product is the following morphism of sheaves over $\Base{X_\w^{\vi',\vi''}}{T}$: \begin{equation}
\label{tensor product stab}
    \Stab{C'}{F^{\vi'}}\boxtimes\Stab{C''}{F^{\vi''}}:=(\phi\times \psi)^*(pr_1^*\Stab{C'}{F^{\vi'}}\otimes pr_2^*\Stab{C''}{F^{\vi''}}).
\end{equation}

Notice that the sheaf theoretic exterior product $pr_1^*\Stab{C'}{F^{\vi'}}\otimes pr_2^*\Stab{C''}{F^{\vi''}}$ defines a morphism of sheaves over $\Base{X_{\w'}^{\vi'}}{T'}\times \Base{X_{\w''}^{\vi''}}{T''}$, that we pull-back to define a morphism over $\Base{X_\w^{\vi',\vi''}}{T}$.
\end{defn}

\subsection{Lifts}
The exterior product \eqref{tensor product stab} is the fundamental building block to produce stable envelopes of $X_{\w}^{\vi',\vi''}$ for those fixed components lying in the image of $j$ starting from the ones of $X_{\w'}^{\vi'}$ and $X_{\w''}^{\vi''}$. However, there is no chance for this formula to give the stable envelope for $X_{\w}^{\vi', \vi''}$. This is because the fibers of the normal bundle of a fixed component in $j$ generally consist both of normal directions in $X_{\w'}^{\vi'}\times X_{\w''}^{\vi''}\subset X^{\vi',\vi''}_\w$ and in its complement. 
%However, it is easy to see that the diagonal restriction of \eqref{tensor product stab} as in \eqref{condition 2 stab} is, up to sign, multiplication by the Euler class of the normal bundle of the fixed component $F^{\vi'}\times F^{\vi''}$ in $X_{\w'}^{\vi'}\times X_{\w''}^{\vi''}$, instead of the class of the normal bundle in $X_\w^{\vi',\vi''}$, as required by condition \ref{condition 2 stable env} of Definition \ref{defn stable envelopes}.
Let $\Ci^\times $ be the torus fixing $X^{\vi'}_{\w'}\times X^{\vi''}_{\w''}\subset X^{\vi',\vi''}_{\w}$ as in Remark \ref{Emebdded nakajima varieties as torus-fixed component}, and let $\overline {\mathfrak{C}}$ be the projection of $\mathfrak{C}$ on $\text{Cochar}(\Ci^\times)\otimes_\Z\R$. The natural solution is to exploit the triangle lemma in \cite[Section 3.6]{aganagic2016elliptic} to construct the stable envelopes of $X_{\w}^{\vi',\vi''}$ as the composition 
\[
\Stab{C}{F^{\vi'}\times F^{\vi''}}=\text{Stab}_{\overline{\mathfrak{C}}}(X_{\w'}^{\vi'}\times X_{\w''}^{\vi''})\circ \Stab{C'}{F^{\vi'}}\boxtimes\Stab{C''}{F^{\vi''}}.
\] 
The purpose of the remaining of this section is to express the first map of this composition more explicitly.

By construction
\begin{equation}
\label{tensor stab compact notation}
\Stab{C'}{F^{\vi'}}\boxtimes\Stab{C''}{F^{\vi''}}: \mathcal{L}_A\to \mathcal{L}
\end{equation}
is a morphism of $\mathcal{O}_{\Base{X^{\vi',\vi''}_{\w}}{T}}$-modules, where $\mathcal{L}_A$ is a line bundle over $E_T(F^{\vi'}\times F^{\vi''})\times \Epic{T}{X^{\vi',\vi''}_{\w}}$ and $\mathcal{L}$ is a line bundle over $E_T(X^{\vi'}_{\w'}\times X^{\vi''}_{\w''})\times\Epic{T}{X^{\vi',\vi''}_{\w}}$ uniquely determined by the defining properties of stable envelopes. 
\begin{lma}
\label{pull back line bundle}
The line bundle $\mathcal{L}$ is pulled back from $E_T(X_{\w}^{\vi',\vi''})\times \Epic{T}{X^{\vi',\vi''}_{\w}}$ via the map $j\times id: E_T(X_{\w'}^{\vi'}\times X_{\w''}^{\vi''})\times \Epic{T}{X^{\vi',\vi''}_{\w}}\to E_T(X_{\w''}^{\vi',\vi''})\times \Epic{T}{X^{\vi',\vi''}_{\w}}$ induced by the inclusion $j:X_{\w''}^{\vi''}\times X_{\w''}^{\vi''}\hookrightarrow X_{\w}^{\vi',\vi''}$.
\end{lma}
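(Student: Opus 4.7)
The plan is to unpack the codomain $\mathcal{L}$ of \eqref{tensor stab compact notation} according to Definition \ref{defn tensor product nakajima varieties} and verify factor by factor that it is pulled back through $j\times\Id$. By construction,
\[
\mathcal{L} = (\phi\times \psi)^*\!\left[ pr_1^*\bigl(\Theta(T^{1/2}X^{\vi'}_{\w'})\otimes \mathscr{U}_{X^{\vi'}_{\w'}}\otimes \Theta(h)^{\rk(\ind')}\bigr)\otimes pr_2^*\bigl(\Theta(T^{1/2}X^{\vi''}_{\w''})\otimes \mathscr{U}_{X^{\vi''}_{\w''}}\otimes \Theta(h)^{\rk(\ind'')}\bigr)\right],
\]
so there are three tensor factors to handle separately: the Thom sheaves of the polarizations, the universal bundles, and the $\Theta(h)$ powers.

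For the Thom sheaf factor, Lemma \ref{lemma normal bundles}(\ref{induced polarization}) shows that after replacing each tautological bundle $\V'_i,\V''_i,\W'_i,\W''_i$ by its counterpart $\aV'_i,\aV''_i,\aW'_i,\aW''_i$ on $X^{\vi',\vi''}_\w$, the class $T^{1/2}X^{\vi'}_{\w'}+T^{1/2}X^{\vi''}_{\w''}$ extends to a well-defined $K_T$-class on all of $X^{\vi',\vi''}_\w$. Since moreover $j^*\aV'_i=\V'_i$, $j^*\aV''_i=\V''_i$, $j^*\aW'_i=\W'_i$, $j^*\aW''_i=\W''_i$, the $(j\times\Id)$-pullback of the Thom sheaf of that extended class agrees with $\Theta(T^{1/2}X^{\vi'}_{\w'})\boxtimes\Theta(T^{1/2}X^{\vi''}_{\w''})$. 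The third factor $\Theta(h)^{\rk(\ind')+\rk(\ind'')}$ is topologically trivial and pulled back from $E_T(\pt)$, hence automatically a pullback via $j\times\Id$.

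For the universal bundles I would use the automorphy factor description \eqref{automorphy universal}: each $\mathscr{U}_{X^{\vi^{\bullet}}_{\w^{\bullet}}}$ is, through the characteristic embedding, the line bundle with automorphy factor $\prod_i\phi(z_i,\det\V^{\bullet}_i)\prod_l\phi(z_{a_l},a_l)\phi(z_h,h)$. After pulling back along $\phi\times\psi$, I use $j^*\det\aV'_i=\det\V'_i$ and $j^*\det\aV''_i=\det\V''_i$ on the elliptic-cohomology side, together with the compatibility of $\psi$ with the Picard generators (the lemma preceding \eqref{map picards exterior tenso}) on the Kähler side, to match the exterior product automorphy factor with the automorphy factor of a universal-type line bundle over $E_T(X^{\vi',\vi''}_\w)\times\Epic{T}{X^{\vi',\vi''}_\w}$. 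The main technical point — and the step that I expect will require the most delicate bookkeeping — is precisely this matching of Kähler variables through $\psi$, since $\psi$ is built only implicitly from diagram \eqref{map picards exterior tenso}. Once this is settled, combining the three verifications yields the claimed factorisation $\mathcal{L}=(j\times\Id)^*\mathcal{L}_0$ for some line bundle $\mathcal{L}_0$ on $E_T(X^{\vi',\vi''}_\w)\times\Epic{T}{X^{\vi',\vi''}_\w}$.
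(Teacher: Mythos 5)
Your argument is correct in substance but follows a more laborious, factor-by-factor route than the paper's. You unpack $\mathcal{L}=(\phi\times\psi)^*\bigl[pr_1^*(\Theta(T^{1/2}X^{\vi'}_{\w'})\otimes\mathscr{U}_{X^{\vi'}_{\w'}}\otimes\Theta(h)^{\rk(\ind')})\otimes pr_2^*(\cdots)\bigr]$ into its three tensor factors and verify separately that each is a $(j\times\Id)$-pullback, appealing to Lemma~\ref{lemma normal bundles}(\ref{induced polarization}) for the Thom-sheaf factor, to triviality for the $\Theta(h)$-power, and to the automorphy-factor description~\eqref{automorphy universal} together with the Picard-group comparison~\eqref{map picards exterior tenso} for the universal bundle. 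The paper argues in one step: \emph{every} tensor factor in the target of an off-shell stable envelope is, by construction, pulled back along the characteristic embedding $\chi_c\times\Id$ into $E_T\times\prod_{i}E^{(\vi'_i)}\times E^{(\vi''_i)}\times\Epic{T}{X^{\vi',\vi''}_\w}$; and the splittings $\aV_i=\aV'_i\oplus\aV''_i$ make that characteristic embedding of $X^{\vi'}_{\w'}\times X^{\vi''}_{\w''}$ factor through $E_T(X^{\vi',\vi''}_\w)\times\Epic{T}{X^{\vi',\vi''}_\w}$ via $j\times\Id$, so the entire line bundle $\mathcal{L}$ factors in one stroke, by functoriality of elliptic characteristic classes. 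Your method is more explicit — once you carry out the Kähler-variable bookkeeping through $\psi$ that you flag, it would actually exhibit a concrete model for the lift $\widehat{\mathcal{L}}$ rather than only proving its existence — but it requires that bookkeeping; the paper's commutative-diagram argument bypasses all of it at the cost of being less informative about $\widehat{\mathcal{L}}$. Both are valid proofs.
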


\begin{proof}
Consider the following commutative diagram
\begin{equation*}
\label{diagram lifts bundles}
\begin{tikzcd}
E_T(X_{\w'}^{\vi'}\times X_{\w''}^{\vi''})\times \Epic{T}{X_{\w'}^{\vi',\vi''}}\arrow[d, "j\times id"]\arrow[r, "\chi^c\times id" ]& E_T\times \prod_{i\in I} E^{(\vi')}\times E^{(\vi'')}\times \Epic{T}{X_{\w'}^{\vi',\vi''}}\\
E_T(X_{\w''}^{\vi',\vi''})\times \Epic{T}{X^{\vi',\vi''}_{\w}} \arrow[ur] &
\end{tikzcd}
\end{equation*}
where the top horizontal map is induced by the characteristic embedding \eqref{char emb} of the product of Nakajima varieties $X_{\w'}^{\vi'}\times X_{\w''}^{\vi''}$ and the diagonal map is the product of the characteristic classes of the tautological bundles on $X^{\vi',\vi''}_{\w}$. Commutativity follows from functoriality of characteristic classes.
By definition of stable envelopes, the target of $\Stab{C'}{F^{\vi'}}\boxtimes\Stab{C''}{F^{\vi''}}$ is a line bundle pulled back by the horizontal map of the commutative diagram and hence it factors through $E_T(X_{\w}^{\vi',\vi''})\times \Epic{T}{X_{\w'}^{\vi',\vi''}}$.
\end{proof}
Let $\widehat{\mathcal{L}}$ be the line bundle on $E_T(X_{\w''}^{\vi',\vi''})\times \Epic{T}{X^{\vi',\vi''}_{\w}}$ that pulls back to $\mathcal{L}$ according to Lemma \ref{pull back line bundle} and
let us denote by
\[
(\Stab{C'}{F^{\vi'}}\boxtimes\Stab{C''}{F^{\vi''}})^{lift}:\mathcal{L}_A\to \widehat{\mathcal{L}}
\]
a morphism of sheaves over $\Base{T}{X^{\vi',\vi'}_{\w}}$ that, when restricted to $\mathcal{L}$, coincides with \eqref{tensor stab compact notation}. We call it a lift of $\Stab{C'}{F^{\vi'}}\boxtimes\Stab{C''}{F^{\vi''}}$.

\begin{propn}
\label{ main proposition }
Let $F^{\vi'}$ and $F^{\vi''}$ be fixed components in $X_{\w'}^{\vi'}$ and $X_{\w''}^{\vi''}$ respectively, and let $\mathfrak{C}'$ and $\mathfrak{C}''$ be the chambers in $\text{Cochar}(A')\otimes_\Z\R$ and $\text{Cochar}(A'')\otimes_\Z\R$ induced by the inclusions $A'\hookrightarrow A$ and $A''\hookrightarrow A$ respectively.
Assume also the following:
\begin{assumption}
\label{Assumption main theorem}
The restriction of the normal bundle $\mathscr{N}_{\hat j}$ of the attracting manifold 
\[
\hat j:\Att{\overline{C}}{X_{\w'}^{\vi'}\times X_{\w''}^{\vi''}}\hookrightarrow X^{\vi',\vi''}_\w
\]
fits in the following exact sequence of bundles over $F^{\vi'}\times F^{\vi''}$:
\begin{equation*}
    \begin{tikzcd}
    0\arrow[r]& \restr{\mathcal{N}^{X_{\w'}^{\vi'}\times X_{\w''}^{\vi''}}_{\Att{C}{F^{\vi'}\times F^{\vi''}}}}{F^{\vi'}\times F^{\vi''}}\arrow[r]& \restr{\mathcal{N}^{X_{\w}^{\vi',\vi''}}_{\Att{C}{F^{\vi'}\times F^{\vi''}}}}{F^{\vi'}\times F^{\vi''}}\arrow[r]& \restr{\mathcal{N}_{\hat j}}{F^{\vi'}\times F^{\vi''}} \arrow[r] & 0
    \end{tikzcd},
\end{equation*}
where $\mathcal{N}^{X_{\w'}^{\vi'}\times X_{\w''}^{\vi''}}_{\Att{C}{F^{\vi'}\times F^{\vi''}}}$ is the normal bundle of $\Att{C}{F^{\vi'}\times F^{\vi''}}$ in $X_{\w'}^{\vi'}\times X_{\w''}^{\vi''}$ and $\mathcal{N}^{X_{\w}^{\vi',\vi''}}_{\Att{C}{F^{\vi'}\times F^{\vi''}}}$ is the normal bundle of $\Att{C}{F^{\vi'}\times F^{\vi''}}$ in $X_{\w}^{\vi',\vi''}$.
\end{assumption}
\noindent
Then for any choice of lift $\Stab{C'}{F^{\vi'}}\boxtimes\Stab{C''}{F^{\vi''}})^{lift}$ we have
\begin{equation}
\label{formula stab refinement}
    \Stab{C}{F^{\vi'}\times F^{\vi''}}=\Theta(\mathcal{N}_{\hat j})(\Stab{C'}{F^{\vi'}}\boxtimes \Stab{C''}{F^{\vi''}})^{lift},
\end{equation}
where $\Stab{C}{F^{\vi'}\times F^{\vi''}}$ is the stable envelope of the fixed component $F^{\vi'}\times F^{\vi''}\subset X_\w^{\vi',\vi''}$ with polarization
\[
T^{1/2}X_{\w'}^{\vi'}+T^{1/2}X_{\w''}^{\vi''}+\mathcal{N}_{\hat j},
\]
defined in Lemma \ref{lemma normal bundles} and Remark \ref{remark extension bundles}.
\end{propn}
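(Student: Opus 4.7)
The plan is to verify that the right-hand side of \eqref{formula stab refinement} satisfies the two defining axioms of the elliptic stable envelope of $F^{\vi'}\times F^{\vi''}\subset X^{\vi',\vi''}_\w$ for the chamber $\mathfrak{C}$ and the polarization $T^{1/2}X^{\vi'}_{\w'}+T^{1/2}X^{\vi''}_{\w''}+\mathcal{N}_{\hat j}$, and then invoke uniqueness. Conceptually the formula is an instance of triangle composition along the tower of subtori $\{1\}\subset \Ci^\times\subset A$, where $\Ci^\times$ is the subtorus of Remark \ref{Emebdded nakajima varieties as torus-fixed component} fixing $X^{\vi'}_{\w'}\times X^{\vi''}_{\w''}$: morally, $\Stab{C}{F^{\vi'}\times F^{\vi''}}$ should factor as the $\overline{\mathfrak{C}}$-stable envelope of the fixed component $X^{\vi'}_{\w'}\times X^{\vi''}_{\w''}$ in $X^{\vi',\vi''}_\w$ post-composed with the stable envelope of $F^{\vi'}\times F^{\vi''}$ inside $X^{\vi'}_{\w'}\times X^{\vi''}_{\w''}$. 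The second factor is exactly $\Stab{C'}{F^{\vi'}}\boxtimes\Stab{C''}{F^{\vi''}}$, whereas the first is represented, after lifting, by multiplication by $\Theta(\mathcal{N}_{\hat j})$.

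The first technical step is the bookkeeping of line bundles. One checks that the automorphy factors of $\Theta(T^{1/2}X^{\vi'}_{\w'})$, $\Theta(T^{1/2}X^{\vi''}_{\w''})$ and $\Theta(\mathcal{N}_{\hat j})$ combine to that of $\Theta$ of the prescribed polarization, while the translation of the universal line bundle is governed by Lemma \ref{automorphy translate universal} applied to an index class that is additive under the exact sequence of Assumption \ref{Assumption main theorem}. Independence of the choice of lift follows because any two lifts differ by a section that vanishes when restricted to $E_T(X^{\vi'}_{\w'}\times X^{\vi''}_{\w''})$, and such a discrepancy is annihilated by $\Theta(\mathcal{N}_{\hat j})$, whose section vanishes on the image of the characteristic embedding of $X^{\vi'}_{\w'}\times X^{\vi''}_{\w''}\hookrightarrow X^{\vi',\vi''}_\w$.

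For the support axiom, the product $\Stab{C'}{F^{\vi'}}\boxtimes\Stab{C''}{F^{\vi''}}$ is supported on $\Att{C'}{F^{\vi'}}\times\Att{C''}{F^{\vi''}}\subset X^{\vi'}_{\w'}\times X^{\vi''}_{\w''}$, and its lift extends this support on $X^{\vi',\vi''}_\w$ only in the normal directions to the subvariety $X^{\vi'}_{\w'}\times X^{\vi''}_{\w''}$. Multiplication by $\Theta(\mathcal{N}_{\hat j})$ geometrically confines the support to the $\overline{\mathfrak{C}}$-attracting manifold of the fixed locus, so that the total support lies in the iterated attracting set $\Att{\overline{C}}{\Att{C'}{F^{\vi'}}\times\Att{C''}{F^{\vi''}}}$. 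Białynicki-Birula applied to the filtration $\{1\}\subset\Ci^\times\subset A$ identifies this with $\Att{C}{F^{\vi'}\times F^{\vi''}}$, giving the required condition.

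Finally, the restriction axiom. Pulling back along $i:F^{\vi'}\times F^{\vi''}\hookrightarrow X^{\vi',\vi''}_\w$, the axiom for the two individual envelopes computes $i^*\circ(\Stab{C'}{F^{\vi'}}\boxtimes\Stab{C''}{F^{\vi''}})^{lift}$ as an elliptic pushforward weighted by the Thom class of $\mathcal{N}^{X^{\vi'}_{\w'}\times X^{\vi''}_{\w''}}_{\Att{C}{F^{\vi'}\times F^{\vi''}}}$. Assumption \ref{Assumption main theorem} realizes the full normal bundle $\mathcal{N}^{X^{\vi',\vi''}_\w}_{\Att{C}{F^{\vi'}\times F^{\vi''}}}$ as an extension of $\mathcal{N}_{\hat j}$ by this smaller normal bundle, and multiplicativity of elliptic Thom classes in short exact sequences then turns the extra factor $\Theta(\mathcal{N}_{\hat j})$ into exactly what is needed to obtain $i^*\circ j!\circ m^*$. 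The main obstacle is precisely this last step: promoting the K-theoretic identity of Assumption \ref{Assumption main theorem} to a compatibility of elliptic pushforwards $(j'\times j'')!$ and $j!$ requires checking that the two affine-bundle structures $m'\times m''$ and $m$ over $F^{\vi'}\times F^{\vi''}$ are compatible via $\hat j$, and that the line-bundle trivialisations involved in the sheaf-theoretic pushforward match. Once this compatibility is established, uniqueness of elliptic stable envelopes concludes the proof.
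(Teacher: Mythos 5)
Your overall strategy---verify the two defining axioms for the polarization $T^{1/2}X^{\vi'}_{\w'}+T^{1/2}X^{\vi''}_{\w''}+\mathcal{N}_{\hat j}$ and invoke uniqueness, with the triangle lemma along the tower $\{1\}\subset\Ci^\times\subset A$ as the guiding picture---is exactly the shape of the paper's first proof (Step~1 is the sheaf bookkeeping; Step~2 is the axioms). So as outline your proposal is on target; nevertheless two of your three technical points are either imprecise or left unfinished.

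For the support axiom, the relevant locus in Definition~\ref{defn stable envelopes} is the \emph{full} attracting set $\text{Att}^f$, not $\text{Att}$. You write that $\Stab{C'}{F^{\vi'}}\boxtimes\Stab{C''}{F^{\vi''}}$ is supported on $\Att{C'}{F^{\vi'}}\times\Att{C''}{F^{\vi''}}$ and that the product lies in the iterated set $\Att{\overline{C}}{\Att{C'}{F^{\vi'}}\times\Att{C''}{F^{\vi''}}}=\Att{C}{F^{\vi'}\times F^{\vi''}}$---both statements should involve $\text{Att}^f$. More importantly, the phrase ``multiplication by $\Theta(\mathcal{N}_{\hat j})$ geometrically confines the support'' is where the actual work is hidden: one must prove that $i_Y^*\Theta(\mathcal{N}_{\hat j})=0$ for every $\Ci^\times$-fixed component $Y$ disjoint from $\text{Att}^f_{\overline{\mathfrak{C}}}(X^{\vi'}_{\w'}\times X^{\vi''}_{\w''})$. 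The paper does this by writing $\Theta(\mathcal{N}_{\hat j})$ as $\tilde{j}!\circ\rho^*$ for the closed inclusion $\tilde j:\Att{\overline{C}}{X^{\vi'}_{\w'}\times X^{\vi''}_{\w''}}\hookrightarrow X^{\vi',\vi''}_\w\setminus(\text{Att}^f\setminus\text{Att})$ and using the long exact sequence in elliptic cohomology to get $k^*\circ\tilde{j}!\circ\rho^*=0$, where $k$ is the inclusion of the open complement of the attracting set. Without some form of that localization argument your support claim is unproved.

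For the restriction axiom, you correctly identify that Assumption~\ref{Assumption main theorem} and multiplicativity of elliptic Thom classes should give the result, but you then flag the compatibility of the two affine-bundle structures and of the elliptic pushforwards as an unresolved obstacle and stop there. In the paper this step is treated as immediate, and rightly so: the assumption is stated as a short exact sequence of honest vector bundles over the single fixed component $F^{\vi'}\times F^{\vi''}$, whose middle term is the full normal bundle of the single attracting manifold $\Att{C}{F^{\vi'}\times F^{\vi''}}$. There are not two independent affine bundles to reconcile---there is one, in $X^{\vi',\vi''}_\w$, whose normal bundle is filtered by the normal bundle inside $X^{\vi'}_{\w'}\times X^{\vi''}_{\w''}$ with quotient $\restr{\mathcal{N}_{\hat j}}{F^{\vi'}\times F^{\vi''}}$; multiplicativity of $\Theta$ on short exact sequences then gives $i^*\circ j!\circ m^*$ directly. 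Leaving this as an ``obstacle'' means the restriction axiom is not actually established in your write-up.

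Finally, you have not touched the other, more structural argument the paper gives (its second proof), which transports everything down to the common abelianization $X_S$ and reduces the identity to the explicit off-shell form of hypertoric stable envelopes. That route makes Remark~\ref{remark existence lifts} transparent and is closer in spirit to how the lifts are actually constructed; it is worth knowing even if your first-proof strategy can be repaired along the lines above.
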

We defer the proof of this proposition to the appendix. Here we collect some remarks.

\begin{remq}
\label{remark existence lifts}
It is always possible to construct, at least locally over $\Base{T}{X^{\vi',\vi''}_\w}/E_A$, a lift 
\[
(\Stab{C'}{F^{\vi'}}\boxtimes\Stab{C''}{F^{\vi''}})^{lift}
\] and hence to apply equation \eqref{formula stab refinement} locally. The resulting maps satisfy the defining properties of the stable envelope $\Stab{C}{F^{\vi'}\times F^{\vi''}}$ and hence glue by uniqueness. Existence of local lifts follows from the abelianization procedure \cite[Section 4.3]{aganagic2016elliptic} for the stable envelopes $\Stab{C'}{F^{\vi'}}$ and $\Stab{C''}{F^{\vi''}}$, which allows to produce locally over $\Base{T}{X^{\vi',\vi''}_\w}/E_A$ the latter maps off-shell. This strategy will be explained in more detail in the appendix.
\end{remq}

\begin{remq}
\label{assuption strong but good}
Assumption \ref{Assumption main theorem} is indeed a strong one. It implies that the $\mathfrak{C}$-repelling directions of $F^{\vi'}\times F^{\vi''}\subset  X_{\w'}^{\vi'}\times X_{\w''}^{\vi''}\subset X_\w^{\vi',\vi''}$ that do not lie in $X_{\w'}^{\vi'}\times X_{\w''}^{\vi''}$ are exactly the $\overline{\mathfrak{C}}$-repelling directions for $X_{\w'}^{\vi'}\times X_{\w''}^{\vi''}$. In other words, it means that 
\[
\Att{C}{F^{\vi'}\times F^{\vi''}}\subset \Att{\overline{C}}{X_{\w'}^{\vi'}\times X_{\w''}^{\vi''}}.
\]
Luckily, in the next section we will show that this condition is adequate to produce stable envelopes of Nakajima varieties $X^{\vi}_{\w}$ from the ones of $X_{\w'}^{\vi'}$ and $X_{\w''}^{\vi''}$ when $A=A_\w$.
\end{remq}

\begin{eg}
It is easy to apply formula \eqref{formula stab refinement} whenever the stable envelopes $\Stab{C''}{F^{\vi''}}$ and $\Stab{C''}{F^{\vi''}}$ are expressed off-shell. Let us consider the easiest example here. More involved examples will be provided in the next section. Let $Q=A_1$, $\vi'=(1)$, $\vi''=(0)$, $\w'=\w''=(1)$ and $\mathfrak{C}=\lbrace a_1> a_2\rbrace$. Then  $X^{\vi',\vi''}_\w=T^*\mathbb{P}^1$, the fixed point $F^{\vi'}\times F^{\vi''}$ is the north pole and by Example \ref{off shell zero dim grass} the stable envelopes of $F^{\vi'}=X^{\vi'}_{\w'}$ and $F^{\vi''}=X^{\vi''}_{\w''}$ are 
\[
\Stab{C'}{F^{\vi'}}(t,a_1,z',h)=\frac{\theta\left(\frac{tz'}{a_1}\right)}{\theta(z')}\qquad \Stab{C''}{F^{\vi''}}(t,a_2,z'',h)=1.
\]
Hence the stable envelope $\Stab{C}{F^{\vi'}\times F^{\vi''}}$ with polarization as in Proposition \ref{ main proposition } is
\[
\Stab{C}{F^{\vi'}\times F^{\vi''}}(t,a_1,a_2,z',z'',h)=\Theta(\mathcal{N}_{\hat j})\frac{\theta\left(\frac{tz'}{a_1}\right)}{\theta(z')}=\theta\left(\frac{a_2}{t}\right)\frac{\theta\left(\frac{tz'}{a_1}\right)}{\theta(z')}.
\]
\end{eg}

%%%%%%%%%%%%%%%%%%%%%%%%%%%%
%%%%%%%%%%%%%%%%%%%%%%%%%%%%

\section{Shuffle product of Stable envelopes}
\label{Section 6}

\subsection{Preliminary observations}

Theorem \ref{theorem abelianization} allows to build stable envelopes of a Nakajima variety $X_{\w}^{\vi}$ from the ones of $X_\w^{\vi',\vi''}$. On the other hand, Proposition \ref{ main proposition } allows to build some stable envelopes of $X_{\w}^{\vi',\vi''}$ from the ones of the smaller Nakajima varieties $X_{\w'}^{\vi'}$ and $X_{\w''}^{\vi''}$. In this section we combine these results to produce stable envelopes of $X_{\w}^{\vi}$ from the ones of $X_{\w'}^{\vi'}$ and $X_{\w''}^{\vi''}$. Before entering in the mathematical details, we give a brief overview of the problems to tackle.

Fix a component $F\subset (X_\w^{\vi})^{A}$. Recall that Theorem \ref{theorem abelianization} allows to build $\Stab{C}{F}$ from the stable envelope $\Stab{C}{F^{\vi',\vi''}}$ for a specific choice of fixed component $F^{\vi',\vi''}\subset X_{\w}^{\vi',\vi''}$ uniquely determined by a choice of a parabolic subgroup $P\subset G_{\vi}$ containing $G_{\vi',\vi''}\subset G_{\vi}$, as stated by Lemma \ref{lemma choice componets above}, while Proposition \ref{ main proposition } allows to build stable envelopes of $X_\w^{\vi',\vi''}$ for those components in the image of an embedding $X^{\vi'}_{\w'}\times X_{\w''}^{\vi''}\hookrightarrow X_{\w}^{\vi',\vi''}$ that satisfies Assumption \ref{Assumption main theorem}. As a consequence, to combine these results, we need have to solve the following
\begin{problem}
Find decompositions $V=V'\oplus V''$ and $W=W'\oplus W''$ leading to an embedding $j:X^{\vi'}_{\w'}\times X^{\vi''}_{\w''}\hookrightarrow X^{\vi',\vi''}_{\w}$ and a parabolic group $P\supset G_{\vi',\vi''}$ such that
\begin{enumerate}
    \item the component $F^{\vi',\vi''}\subset X_{\w}^{\vi',\vi''}$ associated to $F$ by Lemma \ref{lemma choice componets above} is in the image of $j:X^{\vi'}_{\w'}\times X^{\vi''}_{\w''}\hookrightarrow X^{\vi',\vi''}_{\w}$;
    \item these data satisfy Assumption \ref{Assumption main theorem} of Proposition \ref{ main proposition }.
\end{enumerate}
\end{problem}

To satisfy these conditions simultaneously, we need to restrict to the special but important case of $A=A_\w$. As a consequence, from now on we will always assume that the torus $A$ acting on $X^\vi_\w$ is $A_\w$. This does not mean that the quiver defining $X^\vi_\w$ cannot have loops or multiple edges, but just that we ignore the action of the torus $B$ arising from these kinds of edges. 
Nonetheless, in Section \ref{subsection Stable envelopes of the instanton moduli space} we will show that in some cases we can still combine these two conditions when the $B$-action is taken into account.

\subsection{Solution to the problem}
In the next lemma, we show that there are multiple solutions to the first condition. Right afterwards, we will address the second one.

\begin{lma}
\label{Lemma existence splittings}
Let $F$ be an $A$-fixed component in $X_\w^\vi$ and fix a chamber $\mathfrak{C}$.
There are decompositions of graded vector spaces $V=V'\oplus V''$ and of graded $A_\w$-modules $W=W'\oplus W''$ of dimensions $\vi',\vi'',\w'$ and $\w''$ respectively and a parabolic subgroup $P\supset G_{\vi',\vi''}$ of $G_\vi$ that give a refinement $X_\w^{\vi',\vi''}$ of $X_\w^\vi$ such that the fixed component $F^{\vi',\vi''}\subset X_\w^{\vi',\vi''}$ determined by Lemma \ref{lemma choice componets above} lies in the image of the embedding $X_{\w'}^{\vi'}\times X_{\w''}^{\vi''}\subset X_\w^{\vi',\vi''}$.
\end{lma}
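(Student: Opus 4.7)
The plan is to encode the fixed component $F$ combinatorially via iterated $A_\w$-fixed-point decompositions, and then to make every choice (of $W'$, $V'$, and the parabolic $P$) respect the ordering on the characters of $A_\w$ induced by $\mathfrak{C}$. With that alignment in place, the compensating map $\psi_F$ of $F$ will already be a compensating map for the refined fixed component, and the latter will automatically lie inside the diagonal embedding $j$.

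First, I would iterate \eqref{decomposition fixed points} along the one-dimensional subtori of $A_\w$, decomposing $W = \bigoplus_{m=1}^{|\w|} \chi_m$ into its weight spaces (each $\chi_m$ sitting at some vertex $i_m \in I$) so that
\[
F \cong \prod_{m=1}^{|\w|} \naka_Q(\vi^{(m)}, \delta_{i_m})
\]
for a unique tuple $(\vi^{(1)}, \dots, \vi^{(|\w|)})$ with $\sum_m \vi^{(m)} = \vi$. The chamber $\mathfrak{C}$ totally orders the $\chi_m$; up to relabelling, assume $\chi_1 > \dots > \chi_{|\w|}$.

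Now pick any cut $S' = \{1, \dots, l\}$, $S'' = \{l+1, \dots, |\w|\}$, and set $W' = \bigoplus_{m \in S'} \chi_m$, $W'' = \bigoplus_{m \in S''} \chi_m$, $\vi' = \sum_{m \in S'} \vi^{(m)}$, $\vi'' = \sum_{m \in S''} \vi^{(m)}$. In each $V_i$, I would choose a basis on which $\psi_F : A_\w \to G_\vi$ acts diagonally, ordered so that $S'$-weights precede $S''$-weights, and declare $V_i'$, $V_i''$ to be the spans of the first $\vi_i'$ and last $\vi_i''$ of these basis vectors. Take $G_{\vi', \vi''} \subset G_\vi$ to be the corresponding block-diagonal subgroup and $P$ the upper block-triangular parabolic; this produces the refinement $X_\w^{\vi', \vi''}$ and the embedding $j$ as constructed in Section \ref{subsection embedding nakajima varieties}.

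Finally, I would verify that these data realize what the lemma asks for; the crux is to ensure simultaneous compatibility with the construction in Lemma \ref{lemma choice componets above} and with the image of $j$. By construction $\psi_F$ already factors through $G_{\vi', \vi''}$, and the weights it induces on $\mathfrak{n}_{\vi', \vi''} = \bigoplus_i \Hom(V_i'', V_i')$ are of the form $\chi_p \chi_q^{-1}$ with $p \in S'$, $q \in S''$, all attracting with respect to $\mathfrak{C}$ since $\chi_p > \chi_q$; thus the permutation $\tau$ in the proof of Lemma \ref{lemma choice componets above} can be taken to be the identity, and uniqueness identifies the refined fixed component $F^{\vi', \vi''}$ with the component of $(X_\w^{\vi', \vi''})^A$ selected by $\psi_F$. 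The inclusion $F^{\vi', \vi''} \subset j(X_{\w'}^{\vi'} \times X_{\w''}^{\vi''})$ then follows from $A_\w$-equivariance: any point fixed by $(a, \psi_F(a))$ must preserve the weight decomposition of both $V$ and $W$, which by our chosen bases is exactly the splitting into $V' \oplus V''$ and $W' \oplus W''$, forcing the point to be a direct sum of quiver representations and hence to lie in $j(F^{\vi'} \times F^{\vi''})$. The main subtlety is to satisfy the three conditions (image of $j$, block-diagonal factorization of $\psi_F$, and no repelling weights on $\mathfrak{n}_{\vi', \vi''}$) with a single choice of basis; the chamber-compatibility of the ordering on the $V_i$-basis is precisely what makes this possible, and it is the reason we must restrict to $A = A_\w$ so that $W$ diagonalizes into one-dimensional weight subspaces.
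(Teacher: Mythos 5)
Your proof is correct and follows essentially the same route as the paper's: decompose $W$ into one-dimensional $A_\w$-weight lines, order them by the chamber (absorbing the permutation $\tau$ into a relabelling), cut at an arbitrary position $l$ to define $W'$, $W''$, $V'$, $V''$, take the standard block-diagonal $G_{\vi',\vi''}$ and upper block-triangular $P$, and observe that the $A$-weights on $\mathfrak{n}_{\vi',\vi''}=\bigoplus_i\Hom(V_i'',V_i')$ are then all attracting, so the compensating map $\psi_F$ itself selects the component $F^{\vi',\vi''}$ of Lemma \ref{lemma choice componets above} and lands in the image of $j$. The paper packages the relabelling explicitly as the permutation $\tau$ and checks the attracting condition on all of $\mathfrak{n}=[\mathfrak{b},\mathfrak{b}]$ rather than only $\mathfrak{n}_{\vi',\vi''}$, but the content is the same.
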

\begin{remq}
As the proof of the lemma will show, in general there are many different choices of splittings $V=V'\oplus V''$ and $W=W'\oplus W''$. Indeed, the proof shows that there are exactly $k-1$ possible splittings, where $k=\dim(W)$. This is important because the more the splittings are, the more will be the ways to produce the stable envelopes $\Stab{C}{F}$ of $X_\w^{\vi}$.
\end{remq}

\begin{proof}[Proof of the lemma]
Let $S$ be a maximal torus of $G_{\vi}$, and let $B\supset S$ be a Borel subgroup of $G_\vi$, with maximal nilpotent subalgebra $\mathfrak{n}=[\mathfrak{b},\mathfrak{b}]$.
Without loss of generality, we can assume that  $S=\prod_{i\in I}S_i$ is the standard diagonal maximal torus in $G_\vi=\prod_{i\in I}\GL(\vi_i)$ and $B=\prod_{i\in I}B_i$ is the Borel subgroup of upper triangular matrices:
\[
\left\{\begin{pmatrix}
    \ast& \ast & \dots & \ast \\
    0 &\ast & \dots & \ast \\
    \vdots & \vdots & \ddots & \vdots \\
    0 & 0 & \dots & \ast
  \end{pmatrix}\right\}_{i\in I}.
\]
We introduce the following notation: let $k=\dim{W}=\dim{\bigoplus_{i\in I} W_i}$ and let 
\[
W_i=\bigoplus_{j=1}^k \chi_{a_j}\otimes \Ci^{\w_i^{(j)}},
\]
where $a_j$ are coordinates of $A_\w\cong(\Ci^\times)^k$ and $\chi_{a_j}$ are the characters of $A_\w$ such that $\chi_{a_j}(a)=a_j$.
A compensating map of the $A$-fixed point $F\subset X_\w^{\vi}$ has the form 
\[
\psi_F:A\to G_{\vi}=\prod_{i\in I }\GL(\vi_i), \qquad \psi_F(a)=
\left\{\begin{pmatrix}
    a_1 \bm{1}_{\vi_i^{(1)}} & 0 & \dots & 0 \\
    0 &a_2\bm{1}_{\vi_i^{(2)}} & \dots & 0 \\
    \vdots & \vdots & \ddots & \vdots \\
    0 & 0 & \dots & a_k\bm{1}_{\vi_i^{(k)}}
  \end{pmatrix}\right\}_{i\in I}
\]
and gives weight decompositions
\[
V_i=\bigoplus_{j=1}^k \chi_{a_j}\otimes \Ci^{\vi_i^{(j)}}.
\]
These decompositions produce dimension vectors $\vi^{(j)}, \w^{(j)}\in \N^{I}$, for $j=1,\dots, k$ and we have
\[
F=\naka_Q(\vi^{(1)},\w^{(1)})\times \dots\times\naka_Q(\vi^{(k)},\w^{(k)})\hookrightarrow\naka_Q(\vi,\w).
\]
A generic chamber $\mathfrak{C}$ has the form 
\[
\mathfrak{C}=\lbrace a_{ \tau(1)}>\dots>a_{\tau(k)}\rbrace
\]
for some permutation $\tau\in \mathfrak{S}_k$.
The fixed point $F_S$ of the abelianization $X_S$ satisfying Lemma \ref{lemma choice componets above 2} has a compensating map of the form 
\[
\psi_{F_S}:A\to S=\prod_{i\in I }S_i, \qquad \psi_F(a)=
\left\{\begin{pmatrix}
    a_{ \tau(1)} \bm{1}_{{\vi_i}^{( \tau(1))}} & 0 & \dots & 0 \\
    0 &a_{ \tau(2)}\bm{1}_{{\vi_i}^{( \tau(2))}} & \dots & 0 \\
    \vdots & \vdots & \ddots & \vdots \\
    0 & 0 & \dots & a_{ \tau(k)}\bm{1}_{{\vi_i}^{( \tau(k))}}
  \end{pmatrix}\right\}_{i\in I}
\]
because in this way the weights of the subalgebra $\mathfrak{n}=[\mathfrak{b},\mathfrak{b}]$ are of the form
\begin{equation}
    \label{right condition example}
    \chi_{a_{\tau(i)}}\chi_{a_{\tau(j)}}^{-1} \qquad\text{for every $i< j$}
\end{equation}
and so there are no repelling weights with respect to $\mathfrak{C}$, as required.
Now let $1\leq k'\leq k$ and define
\[
V_i':=\bigoplus_{j=1}^{k'} \chi_{\tau(j)}\otimes \Ci^{\vi_i^{\tau(j)}}, \qquad V_i''=\bigoplus_{j=k'+1}^{k} \chi_{\tau(j)}\otimes \Ci^{\vi_i^{\tau(j)}}
\]
and similarly
\[
W_i':=\bigoplus_{j=1}^{k'} \chi_{\tau(j)}\otimes \Ci^{\w_i^{\tau(j)}} \qquad W_i''=\bigoplus_{j=k'+1}^{k} \chi_{\tau(j)}\otimes \Ci^{\w_i^{\tau(j)}}.
\]
Let $P$ be the parabolic subgroup of matrices of the form
\[
\left\{\begin{pmatrix}
    \GL(V_i')& \ast\\
    0 & \GL(V_i'')
  \end{pmatrix}\right\}_{i\in I}.
\]
We claim that the splittings above and the parabolic subgroup $P$ satisfy the conditions of the lemma.
Let $X_{\w'}^{\vi'}\times X_{\w''}^{\vi''}\hookrightarrow X_\w^{\vi',\vi''}$ be the embedding defined by the above splittings (cf. equation \eqref{embedding product naka}).
Because of \eqref{right condition example}, the fixed component $F^{\vi',\vi''}\subset X_\w^{\vi',\vi''}$ whose compensating map is the composition
\[
\begin{tikzcd}
\psi_{F^{\vi',\vi''}}:A\arrow[r, "\psi_{F_S}"]& S\arrow[r, hook]& G_{\vi',\vi''}=\prod_{i\in I}\GL(V_i')\times GL(V''_i)
\end{tikzcd}
\]
satisfies Lemma \ref{lemma choice componets above}. Moreover, by definition of compensating map, the preimage of $F^{\vi',\vi''}$ in $T^*\Repn{Q}{\vi}{\w}$ is contained in the subspace $T^*\Repn{Q}{\vi'}{\w'}\times T^*\Repn{Q}{\vi''}{\w''}\subset T^*\Repn{Q}{\vi}{\w}$, which implies that $F^{\vi',\vi''}$ is contained in $X^{\vi'}_{\w'}\times X^{\vi''}_{\w''}$.
\end{proof}
\begin{remq}
\label{compesating maps splitting fixed point}
Clearly, an $A$-fixed component $F^{\vi',\vi''}$ in $X_{\w'}^{\vi'}\times X_{\w''}^{\vi''
}\subset X^{\vi',\vi''}_\w$ is of the form $F^{\vi'}\times F^{\vi''}$, where $F^{\vi'}$ is an $A_{\w'}$-fixed component of $X^{\vi'}_{\w'}$ and $F^{\vi''}$ is an $A_{\w''}$-fixed component of $X^{\vi''}_{\w''}$. In particular, notice that compensating maps of the fixed component $F^{\vi',\vi''}=F^{\vi'}\times F^{\vi''}$ determined in the proof of the previous lemma are
\[
\psi_{F{\vi'}}(a)=\left\{\begin{pmatrix}
    a_{ \tau(1)} \bm{1}_{{\vi_i}^{( \tau(1))}} & 0 & \dots & 0 \\
    0 &a_{ \tau(2)}\bm{1}_{{\vi_i}^{( \tau(2))}} & \dots & 0 \\
    \vdots & \vdots & \ddots & \vdots \\
    0 & 0 & \dots & a_{ \tau(k')}\bm{1}_{{\vi_i}^{( \tau(k'))}}
  \end{pmatrix}\right\}_{i\in I}
\]
and
\[
\psi_{F^{\vi''}}(a)=\left\{\begin{pmatrix}
    a_{ \tau(k'+1)} \bm{1}_{{\vi_i}^{( \tau(k'+1))}} & 0 & \dots & 0 \\
    0 &a_{ \tau(k'+2)}\bm{1}_{{\vi_i}^{(\tau(k'+2))}} & \dots & 0 \\
    \vdots & \vdots & \ddots & \vdots \\
    0 & 0 & \dots & a_{ \tau(k)}\bm{1}_{{\vi_i}^{( \tau(k))}}
  \end{pmatrix}\right\}_{i\in I}.
\] 
\end{remq}
\begin{remq}
Inspecting the previous proof, one sees that the fixed components $F$, $F^{\vi',\vi''}$ and the fixed component $F_S$ defined in Lemma \ref{lemma choice componets above 2} fit in the following diagram:
\begin{equation*}
    \begin{tikzcd}
     & & F_S\cap\Fl_S\arrow[r, "(j_+^S)^A"]\arrow[d, "(\pi_S)^A"] & F_S\cap\widetilde\Fl_S \arrow[r, "(j_-^S)^A"]& F_S\\
    F^{\vi',\vi''}\cap\Fl\arrow[d, "\pi^A"]\arrow[r, "j_+^A"] & F^{\vi',\vi''}\cap\widetilde\Fl\arrow[r, "(j_-)^A"] & F^{\vi',\vi''}\\
    F
    \end{tikzcd}
\end{equation*}
where the top horizontal maps are built choosing $B\cap G_{\vi',\vi''}$ as Borel subgroup of $G_{\vi',\vi''}$.
\end{remq}

Lemma \ref{Lemma existence splittings} proves that 
the fixed component $F^{\vi',\vi''}$ associated to the pair $(F, \mathfrak{C})$ by Lemma \ref{lemma choice componets above} is contained in some subvariety of $X^{\vi',\vi''}_\w$ of the form $X^{\vi'}_{\w'}\times X^{\vi''}_{\w''}$, i.e. the existence of a commutative diagram of the form
\begin{equation*}
    \begin{tikzcd}
    F^{\vi',\vi''}\arrow[r,hookrightarrow] \arrow[d,hookrightarrow]& X_\w^{\vi',\vi''}\\
    X^{\vi'}_{\w'}\times X^{\vi''}_{\w''}\arrow[ru, hookrightarrow]
    \end{tikzcd}
\end{equation*}
Therefore, to solve the problem stated at the beginning of this section, it remains to check that the embeddings in the previous diagram satisfy Assumption \ref{Assumption main theorem}.

\begin{lma}
\label{Lemma Combinatioin conditions}
Let $F$ be an $A$ fixed component in $X^{\vi}_\w$ and $\mathfrak{C}$ a chamber. Let $P$ be a parabolic subgroup and $V=V'\oplus V''$, $W=W'\oplus W''$ two splittings satisfying the conditions of Lemma \ref{Lemma existence splittings}. 
Then the fixed component $F^{\vi',\vi''}\subset X^{\vi'}_{\w'}\times X^{\vi''}_{\w''}\subset  X^{\vi',\vi''}_{\w}$ produced by Lemma \ref{Lemma existence splittings} satisfies Assumption \ref{Assumption main theorem}.
\end{lma}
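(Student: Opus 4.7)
My plan is to unpack Assumption \ref{Assumption main theorem} into a statement about tangent weights on $\restr{\mathcal{N}_j}{F^{\vi'}\times F^{\vi''}}$ and then exploit the very specific combinatorics of the splittings produced by Lemma \ref{Lemma existence splittings}. By the characterisation in Remark \ref{assuption strong but good}, the assumption is equivalent to the inclusion of attracting sets
\[
\Att{C}{F^{\vi'}\times F^{\vi''}}\subset \Att{\overline{C}}{X^{\vi'}_{\w'}\times X^{\vi''}_{\w''}},
\]
which at the infinitesimal level amounts to showing that on $\restr{\mathcal{N}_j}{F^{\vi'}\times F^{\vi''}}$ the $\mathfrak{C}$-repelling and $\overline{\mathfrak{C}}$-repelling subbundles coincide. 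Granting this, the desired short exact sequence is obtained by applying the exact functor ``take the $\mathfrak{C}$-repelling part'' to the short exact sequence of $A$-equivariant bundles coming from $X^{\vi'}_{\w'}\times X^{\vi''}_{\w''}\subset X^{\vi',\vi''}_\w$, and then identifying the $\mathfrak{C}$-repelling part of $\restr{\mathcal{N}_j}{F^{\vi'}\times F^{\vi''}}$ with $\restr{\mathcal{N}_{\hat j}}{F^{\vi'}\times F^{\vi''}}$.

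To carry out the weight check I would first pin down the $A_\w$-weights of $\restr{\mathcal{N}_j}{F^{\vi'}\times F^{\vi''}}$ using the compensating maps of Remark \ref{compesating maps splitting fixed point}: at $F^{\vi'}\times F^{\vi''}$ the tautological bundles $\aV_i',\aW_i'$ carry only the $A_\w$-weights $a_{\tau(1)},\dots,a_{\tau(k')}$ and $\aV_i'',\aW_i''$ carry only $a_{\tau(k'+1)},\dots,a_{\tau(k)}$. Plugging these into the explicit formula for $\mathcal{N}_j$ given in Lemma \ref{lemma normal bundles} shows that every $A$-weight on $\restr{\mathcal{N}_j}{F^{\vi'}\times F^{\vi''}}$ has the form $a_{\tau(j)}^{\pm 1}a_{\tau(l)}^{\mp 1}$, possibly multiplied by $h^{-1}$, with $1\le j\le k'<l\le k$.

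The last step is the chamber comparison. In $\mathfrak{C}=\{a_{\tau(1)}>\dots>a_{\tau(k)}\}$ the weight $a_{\tau(l)}a_{\tau(j)}^{-1}$ with $j<l$ is repelling and $a_{\tau(j)}a_{\tau(l)}^{-1}$ is attracting. The one-parameter subgroup $\Ci^\times\subset A_\w$ defining $\overline{\mathfrak{C}}$ (Remark \ref{Emebdded nakajima varieties as torus-fixed component}) has weight one on $W'$ and zero on $W''$, so each character $a_{\tau(m)}$ restricts to $\Ci^\times$-weight $\mathbf{1}_{m\le k'}$; hence on $\mathcal{N}_j$ the $\mathfrak{C}$-repelling weights restrict to $\Ci^\times$-weight $-1$ and the $\mathfrak{C}$-attracting ones to $+1$. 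Since $\overline{\mathfrak{C}}$ is the projection of $\mathfrak{C}$ to $\mathrm{Cochar}(\Ci^\times)\otimes\R$, its repelling ray is the negative one, so the two filtrations on $\mathcal{N}_j$ agree. There is no serious geometric obstacle; the argument is careful bookkeeping of chamber conventions, and the essential observation is that the splittings in Lemma \ref{Lemma existence splittings} were engineered precisely so that the ``first $k'$'' weights of $A_\w$ sit on the primed side and the remaining ones on the double-primed side, which forces the two notions of repelling to coincide on $\mathcal{N}_j$.
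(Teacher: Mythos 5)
Your proposal is correct and takes essentially the same route as the paper. Both proofs reduce the assertion to comparing the $\mathfrak{C}$-repelling part of $\restr{\mathcal{N}_j}{F^{\vi'}\times F^{\vi''}}$ with $\restr{\mathcal{N}_{\hat j}}{F^{\vi'}\times F^{\vi''}}$ (which by Lemma \ref{lemma normal bundles} is the $\overline{\mathfrak{C}}$-repelling part of $\mathcal{N}_j$), and both then verify this by reading off the $A_\w$-weights of $\mathcal{M}(V',V'',W',W'')$ and $h^{-1}\mathcal{M}(V',V'',W',W'')^\vee$ from the compensating maps of Remark \ref{compesating maps splitting fixed point}; the paper phrases the weight check as ``$\mathcal{M}$ has only repelling $\mathfrak{C}$-weights'' while you phrase it character-by-character via the indicator $\mathbf{1}_{m\le k'}$, but these are equivalent bookkeeping of the same observation that Lemma \ref{Lemma existence splittings} places the first $k'$ torus weights on the primed side and the rest on the double-primed side.
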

\begin{proof}
Recall that we have to show that the restriction of the normal bundle $\mathcal{N}_{\hat j}=\mathcal{N}^{X^{\vi',\vi''}_\w}_{\Att{\overline{C}}{X^{\vi'}_{\w'}\times X^{\vi''}_{\w''}}}$ to $F^{\vi',\vi''}$ fits in the following exact sequence:
\begin{equation*}
    \begin{tikzcd}
    0\arrow[r]& \restr{\mathcal{N}^{X_{\w'}^{\vi'}\times X_{\w''}^{\vi''}}_{\Att{C}{F^{\vi'}\times F^{\vi''}}}}{F^{\vi',\vi''}}\arrow[r]& \restr{\mathcal{N}^{X_{\w}^{\vi',\vi''}}_{\Att{C}{F^{\vi'}\times F^{\vi''}}}}{F^{\vi',\vi''}}\arrow[r]& \restr{\mathcal{N}_{\hat j}}{F^{\vi',\vi''}} \arrow[r] & 0
    \end{tikzcd}.
\end{equation*}
Since $F^{\vi',\vi''}\subset X^{\vi'}_{\w'}\times X^{\vi''}_{\w''}\subset  X^{\vi',\vi''}_{\w}$, we have the following exact sequence of bundles over $F^{\vi',\vi''}$:
\begin{equation*}
    \begin{tikzcd}
    0\arrow[r]& \mathcal{N}^{X^{\vi'}_{\w'}\times X^{\vi''}_{\w''}}_{F^{\vi',\vi''}}\arrow[r]& \mathcal{N}^{X^{\vi',\vi''}_\w}_{F^{\vi',\vi''}}\arrow[r]& \restr{\mathcal{N}_{X^{\vi'}_{\w'}\times X^{\vi''}_{\w''}}^{X^{\vi',\vi''}_\w}}{F^{\vi',\vi''}} \arrow[r] & 0
    \end{tikzcd}.
\end{equation*}
Since $F^{\vi',\vi''}$ is $A_\w$-fixed, this sequence splits in two exact sequence of bundles with only attracting or repelling weights with respect to $\mathfrak{C}$; in particular the one with repelling weights is
\begin{equation*}
    \begin{tikzcd}
    0\arrow[r]& \restr{\mathcal{N}^{X_{\w'}^{\vi'}\times X_{\w''}^{\vi''}}_{\Att{C}{F^{\vi'}\times F^{\vi''}}}}{F^{\vi',\vi''}}\arrow[r]& \restr{\mathcal{N}^{X_{\w}^{\vi',\vi''}}_{\Att{C}{F^{\vi'}\times F^{\vi''}}}}{F^{\vi',\vi''}}\arrow[r]& \restr{\mathcal{N}_{X^{\vi'}_{\w'}\times X^{\vi''}_{\w''}}^{X^{\vi',\vi''}_\w}}{F^{\vi',\vi''},<_\mathfrak{C}} \arrow[r] & 0
    \end{tikzcd}.
\end{equation*}
As a consequence, it suffices to show that
\begin{equation}
    \label{eqn too show}
    \restr{\mathcal{N}_{X^{\vi'}_{w'}\times X^{\vi''}_{\w''}}^{X^{\vi',\vi''}_\w}}{F^{\vi',\vi''},<_\mathfrak{C}} =\restr{\mathcal{N}_{\hat j}}{F^{\vi',\vi''}}
\end{equation}
as bundles over $F^{\vi',\vi''}$.
In addition, both these bundles are sub-bundles of 

\[\restr{\mathcal{N}_{X^{\vi'}_{w'}\times X^{\vi''}_{\w''}}^{X^{\vi',\vi''}_\w}}{F^{\vi',\vi''}},
\]
so it suffices to show that their fibers coincide.
First of all, notice that by the proof of Lemma \ref{Lemma existence splittings} the $A$-module $\Hom(W',W'')$ has no non-repelling weights. 
By Lemma \ref{lemma normal bundles}, the restriction of $\mathcal{N}_{X^{\vi'}_{\w'}\times X^{\vi''}_{\w''}}^{X^{\vi',\vi''}_\w}$ to a fixed point $p\in F^{\vi'}\times F^{\vi''}$ is the $A_\w$-module
\[
\mathcal{M}(V',V'',W',W'')+h^{-1}\mathcal{M}(V',V'',W',W'')^\vee,
\]
where $V'$ and $V''$ are acted on by $A$ via the compensating map $\psi_{F^{\vi',\vi''}}:A\to G_{\vi',\vi''}$. 
Since by assumption $F^{\vi',\vi''}$ is contained in $X^{\vi'}_{\w'}\times X^{\vi''}_{\w''}$, it follows that the map $\psi_{F_{\vi',\vi''}}$ acts on $V'$ with a subset of the weights with which it acts on $W'$, and similarly, it acts on $V''$ with a subset of the weights with which it acts on $W''$ (cf. Remark \ref{compesating maps splitting fixed point}). As a consequence, the $A$-module $\mathcal{M}(V',V'',W',W'')$ has no non-repelling weights too, because it is the direct sum of submodules of the form $\Hom(U',U'')$, where $U'$ is a submodule of $V'$ or $W'$ and $U''$ is a submodule of $V''$ or $W''$. Dually, $\mathcal{M}(V',V'',W',W'')^\vee$ has no non-attracting weights. Thus, we have that
\[
\left(\mathcal{N}_{X^{\vi'}_{w'}\times X^{\vi''}_{\w''}}^{X^{\vi',\vi''}_\w}\right)_{p,<_\mathfrak{C}}\cong\mathcal{M}(V',V'',W',W'')
\]
On the other hand, the subvariety $X^{\vi'}_{\w'}\times X^{\vi''}_{\w''}\subset X^{\vi',\vi''}_\w$ is the fixed component of the action of a subgroup $\Ci^\times\subset A_\w$ on $X_\w^{\vi',\vi''}$ that acts with weight one on $W'$ and trivially elsewhere and whose compensating map acts with weight one on $V'$ and trivially on $V''$. But the chamber $\mathfrak{\overline C}$ is the chamber $\lbrace a>0 \rbrace$, so the attracting sub-bundle of the normal bundle $\mathcal{N}_j$ has fibers isomorphic to  $\mathcal{M}(V',V'',W',W'')$, i.e. 
\[
\mathcal{N}_{\hat j,p}=\mathcal{M}(V',V'',W',W'').
\]
This shows equation \eqref{eqn too show}, and hence the lemma.
\end{proof}
Before stating the main result of the paper, we record one more lemma:
\begin{lma}
et $F$ be an $A$ fixed component in $X^{\vi}_\w$ and $\mathfrak{C}$ a chamber. Let $P$, $V=V'\oplus V''$ and $W=W'\oplus W''$ be a parabolic subgroups and two splittings satisfying the conditions of Lemma \ref{Lemma existence splittings} and let $F^{\vi',\vi''}\subset X^{\vi'}_{\w'}\times X^{\vi''}_{\w''}\subset  X^{\vi',\vi''}_{\w}$ be its associated fixed point.

Then the maps $j_+^A$ and $j_-^A$ in the diagram
\begin{equation*}
    \begin{tikzcd}
    F^{\vi',\vi''}\cap\Fl\arrow[d, "\pi^{A}"]\arrow[r, "j_+^{A}"] & F^{\vi',\vi''}\cap\Fl'^{A_{\w}}\arrow[r, "j_-^{A}"] & F^{\vi',\vi''}\\
    F
    \end{tikzcd}
\end{equation*}
are the identity map while $\pi^A$ is the isomorphism induced by the automorphism of $T^*\Repn{Q}{\vi}{\w}$ conjugating a representation by the inverse of the permutation $\tau\in \prod_{i\in I}\mathfrak{S}_{{\w_i}}$ defined in the proof of Lemma \ref{Lemma existence splittings}.
\end{lma}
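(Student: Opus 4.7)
The plan has two parts: identifying $j_+^A$ and $j_-^A$ with identity maps, and exhibiting $\pi^A$ as the isomorphism induced by the action of $\tau^{-1}$ on $T^*\Repn{Q}{\vi}{\w}$.

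For the first part, the key observation is that the preimages of $F^{\vi',\vi''}$ in the three loci $\mu^{-1}(0)^{\theta-ss}$, $\mu^{-1}(\mathfrak{p}^\perp)^{\theta-ss}$ and $\mu_{\vi',\vi''}^{-1}(0)^{\theta-ss}$ all coincide. Indeed, since by construction $F^{\vi',\vi''}$ is contained in $X^{\vi'}_{\w'}\times X^{\vi''}_{\w''}$, every preimage point is in the $G_{\vi',\vi''}$-orbit of a block-diagonal representation $x=x'\oplus x''$ with $x'\in T^*\Repn{Q}{\vi'}{\w'}$ and $x''\in T^*\Repn{Q}{\vi''}{\w''}$. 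For such a block-diagonal $x$, the moment map value $\mu(x)\in\g^*_\vi$ is itself block-diagonal (both the commutators $[x_e,x_{e^*}]$ and the terms $j_ii_i$ are block-diagonal), so $\mu(x)$ lies in the subspace $(\g_{\vi'}\oplus\g_{\vi''})^*\subset\g^*_\vi$. In particular $\mu(x)$ annihilates $\mathfrak{n}_{\vi',\vi''}$ and therefore belongs to $\mathfrak{p}^\perp$; moreover $\mu_{\vi',\vi''}(x)=(di_{G_{\vi',\vi''}})^*\mu(x)$ coincides with $\mu(x)$ under this identification. Consequently $\mu_{\vi',\vi''}(x)=0$ is equivalent to $\mu(x)=0$, and after taking the $G_{\vi',\vi''}$-quotients we conclude that the three preimages of $F^{\vi',\vi''}$ in $\Fl$, $\widetilde\Fl$ and $X^{\vi',\vi''}_\w$ coincide. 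Passing to $A$-fixed loci, the closed embeddings $j_+^A$ and $j_-^A$ become the identity on $F^{\vi',\vi''}$.

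For the second part, by Remark \ref{compesating maps splitting fixed point} the compensating map of $F^{\vi',\vi''}$ is $\psi_{F^{\vi',\vi''}}=c_\tau\circ\psi_F=\tau\psi_F\tau^{-1}$, where $\tau\in\prod_{i\in I}\mathfrak{S}_{\vi_i}$ is the permutation built in the proof of Lemma \ref{Lemma existence splittings} to order the weight spaces so that $\mathfrak{n}_{\vi',\vi''}$ has only attracting weights with respect to $\mathfrak{C}$. A representation $p\in\mu^{-1}(0)^{\theta-ss}$ lifts a point of $F\subset X^\vi_\w$ precisely when it is fixed by the $A$-action twisted by $\psi_F$, and it lifts a point of $F^{\vi',\vi''}\cap\Fl$ precisely when it is fixed by the action twisted by $\psi_{F^{\vi',\vi''}}=\tau\psi_F\tau^{-1}$. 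These two conditions correspond bijectively under the action of $\tau^{-1}\in G_\vi$ on the prequotient: $p$ is $\psi_{F^{\vi',\vi''}}$-fixed if and only if $\tau^{-1}\cdot p$ is $\psi_F$-fixed. Conjugation by $\tau^{-1}$ carries $G_{\vi',\vi''}$-orbits into $\tau^{-1}G_{\vi',\vi''}\tau$-orbits, which are in particular $G_\vi$-orbits, so this bijection descends to a well-defined morphism $F^{\vi',\vi''}\cap\Fl\to F$ of GIT quotients. This morphism is an isomorphism (it is injective on orbits by uniqueness in Lemma \ref{lemma choice componets above} and surjective because every lift of $F$ can be rotated by $\tau$ to a lift of $F^{\vi',\vi''}$), and it agrees with $\pi^A$ because $\pi$ is just the canonical quotient $\mu^{-1}(0)^{\theta-ss}/G_{\vi',\vi''}\to \mu^{-1}(0)^{\theta-ss}/G_\vi$.

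The main bookkeeping obstacle is to keep the geometric $A$-action and the compensated (twisted) $A$-action cleanly separated, and to verify that the conjugation-by-$\tau^{-1}$ map, which a priori only lives on the prequotient, descends to the claimed isomorphism of GIT quotients. Beyond this, the proof is essentially a direct comparison of preimages and compensating maps, with no further computation required.
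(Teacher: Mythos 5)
Your proof is correct and, modulo the added detail, takes the same route as the paper, whose own proof is the one-sentence remark that the result ``follows by comparing the preimages of $F$ and $F^{\vi',\vi''}$ in $T^*\Repn{Q}{\vi}{\w}$.'' One small logical slip worth flagging in your writeup of the first part: from block-diagonality of $\mu(x)$ alone you conclude that $\mu(x)$ annihilates $\n_{\vi',\vi''}$ ``and therefore belongs to $\mathfrak{p}^\perp$'' --- but $\mathfrak{p}^\perp$ requires vanishing on all of $\mathfrak{p}=\g_{\vi'}\oplus\g_{\vi''}\oplus\n_{\vi',\vi''}$, not just on $\n_{\vi',\vi''}$, so that intermediate assertion is wrong as stated. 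It is also unnecessary: you already have the cleaner chain --- $\mu(x)$ is supported on $(\g_{\vi'}\oplus\g_{\vi''})^*$ by block-diagonality, and $\mu_{\vi',\vi''}(x)=0$ kills the remaining component, so $\mu(x)=0$ and the inclusion in $\mathfrak{p}^\perp$ follows trivially --- and that is the argument you actually use to conclude that the three preimages coincide. The discussion of $\pi^A$ via compensating maps and conjugation by $\tau^{-1}$ is sound and correctly identifies the map $[p]_{G_{\vi',\vi''}}\mapsto[\tau^{-1}p]_{G_\vi}=[p]_{G_\vi}$ with the canonical quotient.
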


\begin{proof}
The result follows by comparing the preimages of $F$ and $F^{\vi',\vi''}$ in $T^*\Repn{Q}{\vi}{\w}$.
\end{proof}

\subsection{The Shuffle product}
We can finally combine Theorem \ref{theorem abelianization} with Proposition \ref{ main proposition } to deduce the main result of this paper.
\begin{thm}
\label{shuffle theorem}
Let $\mathfrak{C}$ be a chamber and $F$ a fixed component of $X_\w^{\vi}$. Let also $P$, $V=V'\oplus V''$ and $W=W'\oplus W''$ be a parabolic subgroup and two splittings satisfying Lemma \ref{Lemma existence splittings}. Then  the stable envelope $\Stab{C}{F}$ with standard polarization $T^{1/2}X^{\vi}_\w$ is given by
\begin{equation}
\label{shuffle formula}
    \Stab{C}{F}=\pi! \circ j_+^* \circ (j_-!)^{-1}\circ \Theta(\mathcal{N}_{\hat j}) \tau^*(-h\delta)\left((\Stab{C'}{F^{\vi'}}\boxtimes \Stab{C''}{F^{\vi''}})^{lift}\right)\circ (\pi^A!)^{-1},
\end{equation}
where $F^{\vi'}\times F^{\vi''}\subset X^{\vi'}_{\w'}\times X^{\vi''}_{\w''}$ is the fixed component determined by Lemma \ref{Lemma existence splittings} and $\delta=\det \Delta\in \Char(G_{\vi',\vi''})$ is the character associated to the class $\Delta$ such that
\begin{equation}
    \label{equation shift}
    \Delta-h^{-1}\Delta^\vee=T^{1/2}X^{\vi',\vi''}_\w+h^{-1}\Nbund^\vee-\Nbund-T^{1/2}X^{\vi'}_{\w'}-T^{1/2}X^{\vi''}_{\w''}-\mathscr{N}_{\hat j}.
\end{equation}
More explicitly, we have
\begin{equation}
\label{explicit shuffle}
    \Stab{C}{F}=\Shuffle\restr{\left\lbrace\frac{\Theta(\mathscr{N}_{\hat j })\tau^*(-h\delta)\left((\Stab{C'}{F^{\vi'}}\boxtimes \Stab{C''}{F^{\vi''}})^{lift}\right)\circ (\pi^{A}!)^{-1}}{\Theta(\Nbund^\vee)\Theta(h^{-1}\Nbund^\vee)}\right\rbrace}{z'=z''=z}.
\end{equation}
Here, $\Shuffle=\prod_{i}\Shuffle^{(i)}$, and by $\Shuffle^{(i)}(g)$ we denote the function
\[
\Shuffle^{(i)}(g)(t^{(i)})=\sum_{\sigma^{(i)} \in \mathfrak{S}_{\vi'_i, \vi''_i}}g(\sigma^{(i)}\cdot t^{(i)}),
\]
where $\mathfrak{S}_{\vi'_i,\vi''_i}$ denotes the set of $(\vi'_i,\vi''_i)$-shuffles, permuting the $i$-th Chern roots 
\[
(t^{(i)}_1, \dots,t^{(i)}_{\vi'_i},t^{(i)}_{\vi'_i+1},\dots t^{(i)}_{\vi'_i+\vi_i''}),
\]
and the bundle $\Nbund$ is the bundle defined in Theorem \ref{maps relating naka and abel naka}.
\end{thm}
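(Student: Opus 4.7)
The proof amounts to threading together two previously established results. Theorem \ref{theorem abelianization} rewrites $\Stab{C}{F}$ on $X^\vi_\w$ as a sandwich of push-forwards and pull-backs around the $L$-shaped diagram \eqref{L-shaped diagram}, with $\Stab{C}{F^{\vi',\vi''}}$ on the refinement $X^{\vi',\vi''}_\w$ sitting in the middle. Proposition \ref{ main proposition } then expresses such a stable envelope on $X^{\vi',\vi''}_\w$ at a component contained in an embedded product $X^{\vi'}_{\w'}\times X^{\vi''}_{\w''}$ as a Thom-twisted exterior product of the stable envelopes of the two factors. Lemmas \ref{Lemma existence splittings} and \ref{Lemma Combinatioin conditions} are precisely what guarantees that both reductions can be applied consistently to the same pair $(F,\mathfrak{C})$: the former supplies a parabolic $P$ together with splittings $V=V'\oplus V''$, $W=W'\oplus W''$ for which the component $F^{\vi',\vi''}$ selected by Lemma \ref{lemma choice componets above} is contained in $X^{\vi'}_{\w'}\times X^{\vi''}_{\w''}$, and the latter certifies that Assumption \ref{Assumption main theorem} is satisfied for that configuration.

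Concretely, I first plug the data of Lemma \ref{Lemma existence splittings} into Theorem \ref{theorem abelianization}. Since the preceding lemma shows that $j_+^A$ and $j_-^A$ are the identity, the fixed-locus side of the sandwich collapses to $(\pi^A!)^{-1}$ and one obtains
\[
\Stab{C}{F}=\pi!\circ j_+^*\circ (j_-!)^{-1}\circ \lambda^*\Stab{C}{F^{\vi',\vi''}}\circ (\pi^A!)^{-1},
\]
where $\Stab{C}{F^{\vi',\vi''}}$ uses the shifted polarization $T^{1/2}X^{\vi',\vi''}_\w+h^{-1}\Nbund^\vee-\Nbund$. Invoking Proposition \ref{ main proposition }, whose hypothesis is met by Lemma \ref{Lemma Combinatioin conditions}, I then replace the middle factor by $\Theta(\mathscr{N}_{\hat j})\cdot (\Stab{C'}{F^{\vi'}}\boxtimes\Stab{C''}{F^{\vi''}})^{lift}$, now taken with polarization $T^{1/2}X^{\vi'}_{\w'}+T^{1/2}X^{\vi''}_{\w''}+\mathscr{N}_{\hat j}$. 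The discrepancy between these two polarizations is exactly the class $\Delta-h^{-1}\Delta^\vee$ of \eqref{equation shift}, and the standard change-of-polarization rule for elliptic stable envelopes converts it into a shift $\tau^*(-h\delta)$ of Kähler variables by $\delta=\det\Delta$. Assembling these substitutions gives \eqref{shuffle formula}.

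For the explicit form \eqref{explicit shuffle}, I evaluate $\pi!\circ j_+^*\circ (j_-!)^{-1}$ on the level of elliptic Chern roots. By Proposition \ref{maps relating naka and abel naka}(\ref{class normal j-}), the normal bundle of $j_-$ is $(j_-)^*(h^{-1}\Nbund^\vee)$, so $(j_-!)^{-1}$ introduces the factor $\Theta(h^{-1}\Nbund^\vee)^{-1}$. By point (4) of the same proposition, the vertical tangent bundle of the partial flag bundle $\pi:\Fl\to X^\vi_\w$ has class $(j_-\circ j_+)^*\Nbund^\vee$, so the elliptic Bott/Atiyah--Bott formula writes $\pi!\circ j_+^*$ as a sum over coset representatives of the relative Weyl group $\prod_{i\in I}\mathfrak{S}_{\vi_i}/(\mathfrak{S}_{\vi'_i}\times\mathfrak{S}_{\vi''_i})=\prod_{i\in I}\mathfrak{S}_{\vi'_i,\vi''_i}$, divided by the Thom class $\Theta(\Nbund^\vee)$. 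This is the operator $\Shuffle\{\,\cdot\,/\Theta(\Nbund^\vee)\}$, and the restriction $z'=z''=z$ records the identification of Kähler coordinates effected by $\lambda$.

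The main bookkeeping obstacle is the polarization shift: Theorem \ref{theorem abelianization} and Proposition \ref{ main proposition } are each naturally formulated with respect to a different polarization on $X^{\vi',\vi''}_\w$, and reconciling them is exactly what the factor $\tau^*(-h\delta)$ is designed to do. Once this is tracked carefully through \eqref{equation shift}, the rest is a formal concatenation of the two ingredients, followed by a clean Bott-type evaluation of the push-forward along the partial flag bundle $\pi$.
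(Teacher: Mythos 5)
Your proposal is correct and follows essentially the same argument as the paper: combine Theorem~\ref{theorem abelianization} with Proposition~\ref{ main proposition } via Lemmas~\ref{Lemma existence splittings} and~\ref{Lemma Combinatioin conditions}, noting that $j_+^A$ and $j_-^A$ collapse to the identity, reconcile the two polarizations through the shift $\tau^*(-h\delta)$ governed by \eqref{equation shift}, and then read off the explicit formula \eqref{explicit shuffle} from the normal-bundle classes of Proposition~\ref{maps relating naka and abel naka} via the Bott-type evaluation of $\pi!$. Your write-up is in fact slightly more explicit than the paper's terse proof, but the ingredients and their order of deployment are the same.
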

\begin{proof}
Formula \eqref{explicit shuffle} is simply obtained from \eqref{shuffle formula} by expressing the maps explicitly:
\begin{itemize}
    \item the pushforward $(j_-!)$ is multiplication by $\Theta(h^{-1}\Nbund^\vee)$,
    \item the pushforward $\pi!$ is 
    \[
    \pi!(f)=\Shuffle\left(\frac{f}{\Theta(\Nbund^\vee)}\right)
    \]
    \item the pullbacks are restrictions of the Chern roots of $X_\w^{\vi',\vi''}$ to the Chern roots of $X_\w^{\vi}$, so are omitted.
\end{itemize}
Finally, the shift $\delta$ is due to the fact that Theorem \ref{theorem abelianization} produces stable envelopes of $X_\w^{\vi}$ with standard polarization from stable envelopes of $X_\w^{\vi',\vi''}$ with polarization \[
T^{1/2}X_\w^{\vi',\vi''}+h^{-1}\Nbund^\vee-\Nbund
\] while Proposition \ref{ main proposition } produces stable envelopes of $X_\w^{\vi',\vi''}$ with the polarization defined in Lemma \ref{lemma normal bundles}: 
\[
T^{1/2}X_{\w'}^{\vi'}+T^{1/2}X_{\w''}^{\vi''}+\mathscr{N}_{\hat j},
\] so the result must be shifted by the determinant line bundle of $\Delta$, which is the class defined by equation \eqref{equation shift}, cf. \cite[Section 2.8.2]{aganagic2016elliptic}.
\end{proof}

\subsection{Inductive construction of stable envelopes}
Associated to every vertex $i$ of a quiver is the fundamental dimension vector $\delta_i\in \N^I$, defined as the dimension vector with a one in the $i$-th entry and zeros elsewhere. 
Clearly, every dimension vector can be written as an integral linear combination of the fundamental dimension vectors. Combining Theorem \ref{shuffle theorem} with the explicit formulas of the proof of Lemma \ref{Lemma existence splittings} we deduce the following:
\begin{cor}
Let $(Q,I)$ be a quiver. The elliptic stable envelopes of an arbitrary Nakajima variety $\naka_Q(\vi,\w)$ can be built inductively from the off-shell elliptic stable envelopes of the varieties $\naka_Q(\tilde\vi,\delta_i)$ for all $i\in I$ and $\tilde\vi\in \N^I$ by means of the shuffle product formula \eqref{explicit shuffle}.
\end{cor}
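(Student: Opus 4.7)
The plan is to prove the corollary by induction on the total framing dimension $k = \dim W = \sum_{i \in I} \w_i$. The base case $k = 1$ is immediate: any Nakajima variety $\naka_Q(\vi, \w)$ with $\sum_i \w_i = 1$ necessarily has $\w = \delta_i$ for some $i \in I$, so it is itself one of the building blocks $\naka_Q(\tilde\vi, \delta_i)$, and nothing needs to be done.

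For the inductive step, assume $k \geq 2$ and that the statement holds for all Nakajima varieties with strictly smaller total framing dimension. Fix a chamber $\mathfrak{C}$ and an $A_\w$-fixed component $F \subset \naka_Q(\vi, \w)$. For any $1 \leq k' < k$, Lemma \ref{Lemma existence splittings} furnishes splittings $\vi = \vi' + \vi''$ and $\w = \w' + \w''$ with $\sum_i \w'_i = k'$ and $\sum_i \w''_i = k - k'$, together with a parabolic subgroup $P \supset G_{\vi',\vi''}$, such that the associated component $F^{\vi',\vi''} \subset X_\w^{\vi',\vi''}$ of Lemma \ref{lemma choice componets above} lies in the image of the embedding $X_{\w'}^{\vi'} \times X_{\w''}^{\vi''} \hookrightarrow X_\w^{\vi',\vi''}$, necessarily of the form $F^{\vi'} \times F^{\vi''}$. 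Lemma \ref{Lemma Combinatioin conditions} then guarantees that Assumption \ref{Assumption main theorem} is satisfied, so the shuffle formula \eqref{explicit shuffle} applies and expresses $\Stab{C}{F}$ in terms of a lift of $\Stab{C'}{F^{\vi'}} \boxtimes \Stab{C''}{F^{\vi''}}$. Since $\sum_i \w'_i$ and $\sum_i \w''_i$ are both strictly less than $k$, the inductive hypothesis supplies the two factors $\Stab{C'}{F^{\vi'}}$ and $\Stab{C''}{F^{\vi''}}$ as iterated shuffle products built from the off-shell stable envelopes of varieties of the form $\naka_Q(\tilde\vi, \delta_i)$.

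The main obstacle is that the lift $(\Stab{C'}{F^{\vi'}} \boxtimes \Stab{C''}{F^{\vi''}})^{lift}$ need not exist globally on $\Base{T}{X_\w^{\vi',\vi''}}$. As recorded in Remark \ref{remark existence lifts}, such a lift exists at least locally on $\Base{T}{X_\w^{\vi',\vi''}}/E_A$; applying \eqref{explicit shuffle} to each local lift yields morphisms of sheaves satisfying the defining axioms of $\Stab{C}{F}$, and these glue into a single global stable envelope by uniqueness. This local construction proceeds via the abelianization procedure and relies crucially on access to the \emph{off-shell} stable envelopes of the base varieties: on-shell, the stable envelope of a zero-dimensional $\naka_Q(\tilde\vi, \delta_i)$ collapses to the identity and carries no useful information, whereas its off-shell counterpart retains the full dependence on the Picard parameters needed to reassemble the higher stable envelopes. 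This is precisely the reason the inductive construction must be formulated at the off-shell level.
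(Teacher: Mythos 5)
Your proposal is correct and follows essentially the same approach the paper intends: the corollary is a direct consequence of Theorem~\ref{shuffle theorem}, Lemma~\ref{Lemma existence splittings}, and Lemma~\ref{Lemma Combinatioin conditions}, unfolded as an induction on the total framing dimension $k=\sum_i\w_i$, with the base case being the fundamental framings $\delta_i$. You also correctly identify and address the two delicate points left implicit in the paper --- the local existence and gluing of lifts via Remark~\ref{remark existence lifts}, and the necessity of working off-shell so that the zero-dimensional base cases carry nontrivial dynamical information --- which is exactly the reading the paper has in mind.
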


%%%%%%%%%%%%%%%%%%%%%%%%%%%%
%%%%%%%%%%%%%%%%%%%%%%%%%%%%

\section{Applications}

\subsection{Stable envelopes of Grassmannians}

We now apply Theorem \ref{shuffle theorem} to deduce an inductive formula for stable envelopes of the cotangent bundle of Grassmannian $T^*\Gr{v}{w}$. By Theorem \ref{char emb}, we have an embedding
\[
E_T(T^*\Gr{v}{w})\hookrightarrow E_T\times E^{(v)},
\]
induced by the elliptic characteristic class of the pullback by $T^*\Gr{v}{w}\to \Gr{v}{w}$ of the tautological bundle
\[
\V=\Set{(x,y)\in \Gr{v}{w}\times \Ci^w}{y\in x\subset \Ci^w}.
\]
We introduce coordinates $t_1,\dots, t_k$ on $E^{(v)}$ corresponding to the Chern roots of the bundle $\V$ and equivariant coordinates $a_1,\dots, a_w $ and $h$, which we denote collectively by $t$, $a$ and $h$.

Recall from Example \ref{fixed points Grassmannians} that fixed points of $T^*\Gr{v}{w}$ are labelled by subsets $\bm j_v=\lbrace j_1,\dots, j_v\rbrace \subset \lbrace 1,\dots, w\rbrace$, corresponding to $v$-dimensional coordinate planes in $W=\Ci^n$.

We choose the chamber 
\[
\mathfrak{C}_w=\left\{a_1>a_2>\dots>a_w\right\}
\]
and the standard polarization 
\[
T^{1/2}X=\Repn{Q}{\V}{\W}-\g_{\V}=\Hom(\V,\W)-\Hom(\V,\V).
\]
\begin{thm}
Let $w=w'+w''$. The off-shell elliptic stable envelopes $\text{Stab}^{v,w}_{\mathfrak{C}_w}(\bm j_v)$ of the cotangent bundle $T^*\Gr{v}{w}$ obey the following shuffle product formula:
\begin{multline*}
    \text{Stab}^{v,w}_{\mathfrak{C}_w}(\bm j_v)(t,a,z,h)=\\
    \Shuffle\left\{\phi(t,a,h)\text{Stab}^{v',w'}_{\mathfrak{C}_{w'}}(\bm j_{v'})(t',a',zh^{v''},h)\text{Stab}^{v'',w''}_{\mathfrak{C}_{w''}}(\bm j_{v''})(t'',a'',zh^{w'-v'},h)\right\}
\end{multline*}
with
\[
\phi(t,a,h)=\frac{\prod_{i=1}^{v'}\prod_{j=w'+1}^{w}\theta \left(\frac{a_j}{t_i}\right) \prod_{i=v'+1}^{v}\prod_{j=1}^{w'}\theta\left(\frac{t_i}{a_jh}\right)}{\prod_{i=1}^{v'}\prod_{j=v'+1}^{v}\theta\left(\frac{t_j}{t_i}\right) \theta\left(\frac{t_j}{t_ih}\right)}.
\]
and $\bm j_{v'}=\lbrace j_1,\dots, j_{v'}\rbrace\subset \lbrace1,\dots,w'\rbrace$, $\bm j_{v''}=\lbrace j_{v'+1},\dots, j_{v}\rbrace\subset \lbrace w'+1,\dots,w\rbrace$.
\end{thm}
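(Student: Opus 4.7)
The plan is to specialize Theorem \ref{shuffle theorem} to the case $Q=A_1$ with $\vi=(v)=(v')+(v'')$, $\w=(w)=(w')+(w'')$, chamber $\mathfrak{C}_w=\{a_1>\dots>a_w\}$, and the parabolic subgroup together with the splittings $V=V'\oplus V''$, $W=W'\oplus W''$ produced by Lemma \ref{Lemma existence splittings}. Because the $A_1$ quiver has no edges, the class $\mathscr{M}(\aV',\aV'',\aW',\aW'')$ of Lemma \ref{lemma normal bundles} reduces to $\Hom(\aV',\aW'')\oplus h^{-1}\Hom(\aW',\aV'')$. A direct weight computation under the $\Ci^\times$ of Remark \ref{Emebdded nakajima varieties as torus-fixed component} shows both summands are repelling for $\overline{\mathfrak{C}}$, hence $\mathscr{N}_{\hat j}=\mathscr{M}$. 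The nilpotent bundle $\Nbund$ from Theorem \ref{maps relating naka and abel naka}, associated to the strictly upper block triangular part of the chosen parabolic, is $\Hom(\aV'',\aV')$. Expanding the corresponding Thom sheaves in Chern roots and identifying $t_i=t'_i$ for $i\le v'$ and $t_{v'+k}=t''_k$, one reads off
\[
\frac{\Theta(\mathscr{N}_{\hat j})}{\Theta(\Nbund^\vee)\,\Theta(h^{-1}\Nbund^\vee)}=\phi(t,a,h),
\]
reproducing the prefactor in the theorem.

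Next I would pin down the Kähler shift. Substituting the standard polarizations into \eqref{equation shift} and carrying out the telescoping cancellation yields
\[
\Delta-h^{-1}\Delta^\vee=\bigl[\Hom(\aV'',\aW')-h^{-1}\Hom(\aV'',\aW')^\vee\bigr]-\bigl[\Hom(\aV'',\aV')-h^{-1}\Hom(\aV'',\aV')^\vee\bigr],
\]
so $\Delta=\Hom(\aV'',\aW')-\Hom(\aV'',\aV')$ and
\[
\delta=\det\Delta=(\det\aV'')^{v'-w'}\,(\det\aV')^{-v''}\,(\det\aW')^{v''}.
\]
Reading off the coefficients of the two Picard generators $\det\aV'$ and $\det\aV''$, the translation $\tau^*(-h\delta)$ acts on the Kähler parameters of the smaller Nakajima varieties by $z_{\V'}\mapsto z_{\V'}h^{v''}$ and $z_{\V''}\mapsto z_{\V''}h^{w'-v'}$, so after imposing $z'=z''=z$ these become the arguments $zh^{v''}$ and $zh^{w'-v'}$ displayed in the statement. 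The remaining summand $(\det\aW')^{v''}$ is a purely equivariant Kähler contribution, invisible at the level of the generator $z$ of $\Pic{}{T^*\Gr{v}{w}}$.

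Finally, since $A=A_\w$ acts on $T^*\Gr{v}{w}$ with isolated fixed points (Example \ref{fixed points Grassmannians}), the lemma immediately preceding Theorem \ref{shuffle theorem} guarantees that $j_+^A$ and $j_-^A$ are identities and $\pi^A$ is the isomorphism induced by the permutation $\tau$ of the basis of $W$ built in Lemma \ref{Lemma existence splittings}; for the chamber $\mathfrak{C}_w$ this $\tau$ is the identity, so $(\pi^A!)^{-1}$ contributes trivially at the off-shell level. The lift appearing in \eqref{explicit shuffle} is then simply the product of the two off-shell stable envelopes regarded as functions of the full set of Chern roots of $X_\w^{\vi',\vi''}$. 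Inserting all these ingredients into \eqref{explicit shuffle} yields the claimed identity. The only delicate point is the bookkeeping of $\delta$: one must recognize that only the $(\det\aV')$ and $(\det\aV'')$ factors translate the Picard Kähler parameters $z'$ and $z''$, while the equivariant summand $(\det\aW')^{v''}$ disappears from the final formula together with the hidden $z_{a_i}$ shifts.
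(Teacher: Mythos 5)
Your proposal is correct and follows essentially the same route as the paper's proof: specialize Theorem \ref{shuffle theorem} with the splittings of Lemma \ref{Lemma existence splittings} for the $A_1$ quiver and chamber $\mathfrak{C}_w$, identify $\mathscr{N}_{\hat j}=\Hom(\aV',\aW'')\oplus h^{-1}\Hom(\aW',\aV'')$ and $\Nbund=\Hom(\aV'',\aV')$, expand the Thom classes to obtain $\phi$, and solve \eqref{equation shift} to get $\Delta=\Hom(\aV'',\aW')-\Hom(\aV'',\aV')$ and hence the shifts $z'\mapsto z'h^{v''}$, $z''\mapsto z''h^{w'-v'}$. Your spelling out of $\delta=\det\Delta$ and the observation that the $(\det\aW')^{v''}$ factor is a purely equivariant Kähler contribution that is invisible in the on-shell $z$-variable is a slightly more explicit version of what the paper leaves implicit, but the computation and conclusion coincide.
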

This theorem together with Example \ref{off shell zero dim grass} allows to build explicit formulas for the stable envelopes of cotangent bundles of Grassmannians, which were already computed in \cite{aganagic2016elliptic} \cite{Felder2018} and \cite{rimanyi20193d}. In particular this formula is equal (up to different conventions, in particular the holomorphic normalization) to the shuffle product formula in \cite{Felder2018} that inspired this paper.
\begin{proof}[Proof of the theorem]
Construct the splittings $V=V'\oplus V''$ and $W=W'\oplus W''$ as in the proof of Lemma \ref{Lemma existence splittings}. By our choice of chamber, it follows that the permutation $\tau$ in that proof is simply the identity, so from Theorem \ref{shuffle theorem} we deduce that the stable envelope $\text{Stab}^{v,w}_{\mathfrak{C}_w}(\bm j_v)(t,a,z,h)$ is equal to
\[
\restr{\Shuffle\left\{\frac{\Theta(\mathscr{N}_{\hat j})\tau(-h\delta)^*\left(\text{Stab}^{v',w'}_{\mathfrak{C}_{w'}}(\bm j_{v'})(t',a',z',h)\text{Stab}^{v'',w''}_{\mathfrak{C}_{w''}}(\bm j_{v''})(t'',a'',z'',h)\right)}{\Theta(\mathfrak{N}^\vee)\Theta(h^{-1}\mathfrak{N}^\vee)}\right\}}{z'=z''=z}
\]
with $
\mathcal{N}_{\hat j}=\Hom(\V',\W'')+h^{-1}\Hom(\W',\V'')
$
and 
$
\mathfrak{N}=\Hom(\V'',\V')
$, 
which in terms of the elliptic Chern roots can be written as  
\begin{equation*}
    \mathcal{N}_{\hat j}=\sum_{i=1}^{v'}\sum_{j=w'+1}^{w}\frac{a_j}{t_i}+ \sum_{i=v'+1}^{v}\sum_{j=1}^{w'}\frac{t_i}{a_jh} \qquad \mathfrak{N}=\sum_{i=1}^{v'}\sum_{j=v'+1}^{v}\frac{t_i}{t_j}.
\end{equation*}
Hence, we have that 
\[
\frac{\Theta(\mathcal{N_<})}{\Theta(\mathfrak{N}^\vee)\Theta(h^{-1}\mathfrak{N}^\vee)}=\phi(t,a,h).
\]
Thus, to prove the theorem, it suffice to compute the $h$-shifts of $z'$ and $z''$ determined by $\tau(-h\delta)^*$. 
Since 
\[
 T^{1/2}X^{\vi',\vi''}_{\w}-\mathcal{N}_{\hat j}-T^{1/2}X^{\vi'}_{\w'}-T^{1/2}X^{\vi''}_{\w''}=\Hom(\V'',\W')-h^{-1}\Hom(\W',\V'')
 \]
it follows that 
\begin{align*}
    \Delta-h^{-1}\Delta^\vee=\Hom(\V'',\W')-h^{-1}\Hom(\W',\V'')+h^{-1}\mathfrak{N}^\vee-\mathfrak{N}
\end{align*}
and hence $\Delta=\Hom(\V'',\W')-\mathfrak{N}$, which in turn corresponds to the shifts $z'\mapsto z'h^{v''}$ and $z''\mapsto z''h^{w'-v'}$.
\end{proof}

\subsection{Stable envelopes of the instanton moduli space}
\label{subsection Stable envelopes of the instanton moduli space}

Another classical Nakajima variety is the instanton moduli space $\mathcal{M}(v,w)$. In Example \ref{Jordan quiver} we have recalled that, in accordance with the general theory, this variety admits an action of the torus $T=A_{\w}\times B\times \Ci^\times_h$, with $A_\w=(\Ci^\times)^w$ and $B\cong \Ci^\times$. As a consequence, this $A$-variety lies beyond the realm of validity of Theorem \ref{shuffle theorem}, where no $B$-action is considered, i.e. when $A=A_\w$. Technically speaking, this is due to the fact that Lemma \ref{Lemma existence splittings} and Lemma \ref{Lemma Combinatioin conditions} are in general false when $A\supsetneq A_\w$. However, for certain choices of chambers $\mathfrak{C}$, these lemmas still hold.

In this last section, we study the stable envelopes of the instanton moduli space $\mathcal{M}(v,w)$ for such special chambers. Combined with Smirnov's explicit formulas for the Hilbert scheme of points \cite{smirnov2018elliptic}, we produce explicit inductive formulas for the stable envelopes of $\mathcal{M}(v,w)$.

As in the last section, by Theorem \ref{char emb} we have an embedding
\[
E_T(\mathcal{M}(v,w))\hookrightarrow E_T\times E^{(v)},
\]
induced by the elliptic characteristic class of the tautological bundle $\V$ on $\mathcal{M}(v,w)$.
We introduce coordinates $t_1,\dots, t_k$ on $E^{(v)}$ corresponding to the Chern roots of the bundle $\V$ and equivariant coordinates $a_1,\dots, a_w,b$ and $h$, which we denote collectively by $t$, $a$, $b$ and $h$. Passing to a double cover if needed, we also define $r_1=bh^{-1/2}$ and  $r_2=bh^{-1/2}$, so that $r_1r_2=h^{-1}$. These are the equivariant parameters in terms of which the stable envelopes of the Hilbert scheme are computed in \cite{smirnov2018elliptic}, to which we refer for a detailed description of the action in terms of these variables\footnote{In \cite{smirnov2018elliptic}, the indeterminates $x$, $a$ and $t$ are used in place of our $t$ $b$ and $r$. Moreover, the variable $h$ is replaced with its inverse $h^{-1}$.}. 

As for the polarization, we choose
\[
T^{1/2}\mathcal{M}(v,w)=\Hom(\W,\V) +r_1\Hom(\V,\V)-\Hom(\V,\V).
\]
As we have already mentioned, the choice of chamber is delicate; consider the chambers 
\[
\mathfrak{C}_{w,\tau}=\left\{a_{\tau(1)}>a_{\tau(2)}> \dots > a_{\tau(w)}, \frac{a_{\tau(i)}}{a_{\tau(i+1)}}\gg b \text{ for $i=1,\dots,w-1$} \right\}.
\]
Here $\frac{a_{\tau(i)}}{a_{\tau(i+1)}}\gg b$ means that we choose a chamber such that the weights $\frac{a_{\tau(i)}}{a_{\tau(i+1)}}$ are grater than any power of $b$ that appears as a weight of the normal bundle of a fixed point in $\mathcal{M}(v,w)$.
\begin{lma}
\label{lemma exception instantons}
For these choices of chambers $\mathfrak{C}_{w,\tau}$, Lemma \ref{Lemma existence splittings} and Lemma \ref{Lemma Combinatioin conditions} hold.
\end{lma}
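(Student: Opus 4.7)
The plan is to re-run the arguments of Lemma \ref{Lemma existence splittings} and Lemma \ref{Lemma Combinatioin conditions} while tracking the additional $B$-weights, and to exploit the dominance condition $a_{\tau(i)}/a_{\tau(i+1)} \gg b$ to show that the $B$-weights cannot flip the signs of the $A_\w$-weights that appeared in the original proofs. First I would recall from Example \ref{Jordan quiver} that an $A=A_\w\times B$-fixed point $F$ of $\mathcal{M}(v,w)$ is labelled by a tuple of partitions $(\lambda^{(1)},\dots,\lambda^{(w)})$ with $\sum_j |\lambda^{(j)}|=v$, and that $F$ sits inside the $A_\w$-fixed component determined by $v^{(j)}=|\lambda^{(j)}|$. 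Accordingly, the compensating map $\psi_F\colon A\to G_\vi$ decomposes as an $A_\w$-part acting on each $V^{(j)}$ with weight $a_j$ and a $B$-part whose weights on $V^{(j)}$ are the $B$-contents of the boxes of $\lambda^{(j)}$; in particular, every $B$-weight appearing in $V$ is bounded in absolute value by a constant depending only on $v$.

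I would then define the splittings and the parabolic subgroup following the recipe in the proof of Lemma \ref{Lemma existence splittings}: set $V'=\bigoplus_{i=1}^{k'}V^{(\tau(i))}$, $V''=\bigoplus_{i=k'+1}^{w}V^{(\tau(i))}$, use analogous splittings of $W$, and take $P$ to be the block upper-triangular parabolic. The candidate compensating map $\psi_{F^{\vi',\vi''}}$ is obtained by factoring $\psi_F$ through $G_{\vi',\vi''}$. The content of Lemma \ref{Lemma existence splittings} then reduces to checking that the $A$-action on $\mathfrak{n}_{\vi',\vi''}$ induced by $\psi_{F^{\vi',\vi''}}$ has no repelling weights with respect to $\mathfrak{C}_{w,\tau}$. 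The $A_\w$-weights are of the form $a_{\tau(i)}/a_{\tau(j)}$ with $i\le k'<j$, which are attracting in the $a$-part of the chamber; the corresponding $B$-shifts are bounded powers of $b$ coming from the partitions, and the condition $a_{\tau(i)}/a_{\tau(i+1)}\gg b$ (which telescopes to $a_{\tau(i)}/a_{\tau(j)}\gg b^{j-i}$ for $i<j$) guarantees that each combined weight $(a_{\tau(i)}/a_{\tau(j)})\,b^{m}$ is still attracting.

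For Lemma \ref{Lemma Combinatioin conditions}, the argument of the original proof carries over verbatim except for the step which shows that $\mathcal{M}(V',V'',W',W'')$ has no non-repelling $A$-weights at $F^{\vi',\vi''}$. Inspecting the normal bundle class for the Jordan quiver (whose loop contributions are twisted by $r_1$ and $r_2=r_1^{-1}h^{-1}$) shows that every weight of $\mathcal{M}(V',V'',W',W'')$ at $F^{\vi',\vi''}$ has the form $(a_{\tau(j)}/a_{\tau(i)})\,b^{m}h^{\ell}$ with $i\le k'<j$ and with bounded exponents $m,\ell$. Since $h\notin A$, only the $a$- and $b$-contributions affect the attracting/repelling behaviour, and the $a$-part is already strictly repelling in $\mathfrak{C}_{w,\tau}$. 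The same telescoping estimate as above shows that the bounded $B$-shift cannot flip the sign, hence every such weight is repelling, which verifies Assumption \ref{Assumption main theorem}.

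The main obstacle is to make the informal inequalities $\gg$ effective. Concretely, one has to exhibit a cocharacter $\sigma$ inside $\mathfrak{C}_{w,\tau}$ such that $\langle\sigma,a_{\tau(i)}/a_{\tau(i+1)}\rangle$ strictly exceeds the maximum of $|m\,\langle\sigma,b\rangle|$ over all $B$-exponents $m$ that arise from weights of $\Nbund$ or $\mathcal{M}(V',V'',W',W'')$. Since both bundles involve only finitely many weights and each $B$-exponent is bounded by a function of $v$ (the $B$-content of a tautological fibre is determined by a partition with at most $v$ boxes), this is a finite combinatorial check which translates the $\gg$ into effective linear inequalities cutting out $\mathfrak{C}_{w,\tau}$.
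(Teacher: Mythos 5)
Your proof is correct and follows essentially the same approach as the paper: the key observation that, by the choice of $\mathfrak{C}_{w,\tau}$, a weight $\frac{a_i}{a_j}b^k$ is attracting iff $\frac{a_i}{a_j}$ is attracting, so the $A_\w$-analysis in the proofs of Lemmas \ref{Lemma existence splittings} and \ref{Lemma Combinatioin conditions} carries over unchanged. The paper states this in one line and you fill in the same reasoning in more detail (tracking $B$-contents of partitions and telescoping the chamber inequalities), but there is no substantive difference in method.
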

\begin{proof}
By our choice of the chamber, a weight $\frac{a_{i}}{a_{j}}b^k$ is attracting if and only if $\frac{a_{i}}{a_{j}}$ is attracting. As a consequence, a direction in the normal bundle of an $A_\w\times B$-fixed point is attracting with respect to $\mathfrak{C}_{w,\tau}$ if and only if it is attracting with respect to the action of the subgroup $A_\w$ and the chamber $\left\{a_{\tau(1)}>a_{\tau(2)}> \dots > a_{\tau(w)} \right\}$. This implies that the proofs of the lemmas hold without changes.
\end{proof}
\begin{remq}
Geometrically, these choices of chambers imply that if two $A$-fixed points $F_1$ and $F_2$ are contained in two different $A_\w$-fixed components $Y_1$ and $Y_2$, then $\text{Att}^f_{\mathfrak{C}_{w,\tau}}(F_1)\cap F_2\neq \emptyset$ if and only if $\text{Att}^f_{\widetilde{\mathfrak{C}}_{w,\tau}}(Y_1)\cap Y_2\neq \emptyset$, where $\widetilde{\mathfrak{C}}_{w,\tau}$ is the restriction of $\mathfrak{C}_{w,\tau}$ to $A_\w$. This is because $\text{Att}^f_{\mathfrak{C}_{w,\tau}}(F_1)\cap F_2\neq \emptyset$ if and only if there exists a direction of the normal bundle of $F_1$ in $\mathcal{M}(v,w)$ pointing towards $F_2$ (and hence in the normal bundle of $Y_1$, because $F_2\not\in Y_1$) which is an $A_\w\times B$-module with attracting weight. But the normal weights are of the form $\frac{a_i}{a_j}b^k$, so for these choices of chambers such a weight is attracting if and only if the weight $\frac{a_i}{a_j}$ is attracting. This means that $\text{Att}^f_{\mathfrak{C}_{w,\tau}}(F_1)\cap F_2\neq \emptyset$ if and only if there is a a direction in the normal bundle of $Y_1$ pointing towards $Y_2$ which is an attracting $A_\w$-weight, i.e. iff $\text{Att}^f_{\widetilde{\mathfrak{C}}_{w,\tau}}(Y_1)\cap Y_2\neq \emptyset$.
\end{remq}
The previous lemma implies that, for these choices of chambers, Theorem \ref{shuffle theorem} still holds in the setting of this section. Applying it now for the choice of chamber $\mathfrak{C}_w=\mathfrak{C}_{w,id}$, we get
\begin{thm}
Let $w=w'+w''$. The off-shell elliptic stable envelopes $\text{Stab}^{v,w}_{\mathfrak{C}_w}(\bm\lambda )$ of the instanton moduli spaces $\mathcal{M}(v,w)$ obey the following shuffle product formula:
\begin{multline*}
    \text{Stab}^{v,w}_{\mathfrak{C}_w}(\bm\lambda )(t,a,r,z,h)=\\
    \Shuffle\left\{\phi(t,a,r,h)\text{Stab}^{v',w'}_{\mathfrak{C}_{w'}}(\bm\lambda')(t',a',r,zh^{-w''},h)\text{Stab}^{v'',w''}_{\mathfrak{C}_{w''}}(\bm\lambda'')(t'',a'',r,z,h)\right\}
\end{multline*}
with
\[
\phi(t,a,r,h)=\frac{\prod_{i=v'+1}^{v}\prod_{j=1}^{w'}\theta\left(\frac{t_i}{a_j}\right)\prod_{i=1}^{v'}\prod_{j=w'+1}^{w} \theta\left(\frac{a_j}{ht_i}\right)\prod_{i=1}^{v'}\prod_{j=v'+1}^{v}\theta\left(\frac{r_1t_j}{t_i}\right)\theta\left(\frac{r_2t_j}{t_i}\right)}{\prod_{i=1}^{v'}\prod_{j=v'+1}^{v}\theta\left(\frac{t_j}{t_i}\right)\theta\left(\frac{t_j}{t_ih}\right)}
\]
$\bm\lambda'=(\lambda^{(1)},\dots, \lambda^{(w')})$, $
\bm\lambda''=(\lambda^{(w'+1)},\dots, \lambda^{(w)})$, $v'=\sum_{j=1}^{w'}\sum_m \lambda_m^{(j)}$ and $v''=v-v'$.
\end{thm}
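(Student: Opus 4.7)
The plan is to specialize Theorem \ref{shuffle theorem} to the Jordan quiver. The crucial enabling observation is Lemma \ref{lemma exception instantons}: for the chamber $\mathfrak{C}_w = \mathfrak{C}_{w, id}$, Lemmas \ref{Lemma existence splittings} and \ref{Lemma Combinatioin conditions} still apply despite the presence of the torus $B$, so the shuffle formula \eqref{explicit shuffle} can be applied verbatim. Since the permutation $\tau$ attached to $\mathfrak{C}_{w, id}$ is the identity, the splittings $V = V' \oplus V''$ and $W = W' \oplus W''$ are the canonical decompositions into first $w'$ and last $w''$ coordinates, and the connecting isomorphism $\pi^A!$ reduces to the identity.

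The next step is to evaluate the geometric classes occurring in \eqref{explicit shuffle} for the Jordan quiver with the loop-rotating $Sp(2)$-action. For the parabolic $P$ of block upper-triangular matrices, the bundle $\Nbund$ associated to its nilpotent radical is $\Hom(\V'', \V')$. The attracting normal bundle, computed from Lemma \ref{lemma normal bundles} with the non-standard polarization $T^{1/2}\mathcal{M}(v,w) = \Hom(\W, \V) + r_1 \Hom(\V, \V) - \Hom(\V, \V)$, takes the form
\[
\mathcal{N}_{\hat j} = \Hom(\W', \V'') + h^{-1}\Hom(\V', \W'') + r_1 \Hom(\V', \V'') + r_2 \Hom(\V', \V''),
\]
where the two $r_i$-weighted copies of $\Hom(\V', \V'')$ reflect that $Sp(2)$ diagonalizes the loop and its $h^{-1}$-cotangent dual into eigenlines of weights $r_1, r_2$ with $r_1 r_2 = h^{-1}$. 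Translating $\Theta(\mathcal{N}_{\hat j})/(\Theta(\Nbund^\vee)\Theta(h^{-1}\Nbund^\vee))$ into elliptic Chern roots immediately produces the prefactor $\phi(t, a, r, h)$ of the statement.

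Finally, one computes the K\"ahler shift $\tau^*(-h\delta)$ via equation \eqref{equation shift}. Substituting the classes above, together with the non-standard polarization, into that equation yields, after cancellations, a character $\delta$ whose component along $\det \V'$ has weight $-w''$ and whose component along $\det \V''$ is trivial; the corresponding shift is $z' \mapsto z h^{-w''}$ and $z'' \mapsto z$ upon restriction to the on-shell locus $z' = z'' = z$, producing exactly the K\"ahler arguments of the two stable envelopes in the statement.

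The only source of difficulty is bookkeeping: one must simultaneously keep track of (i) the discrepancy between the standard polarization underlying the statement of Theorem \ref{shuffle theorem} and the non-standard one chosen for $\mathcal{M}(v, w)$, which swaps $\Hom(\V, \W)$ and $\Hom(\W, \V)$ on the framing block, and (ii) the effect of the $Sp(2)$-action rotating the two loop directions into eigenlines of weights $r_1, r_2$. No further geometric input beyond Lemma \ref{lemma exception instantons} and Theorem \ref{shuffle theorem} is required.
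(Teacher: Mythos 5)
Your proposal follows essentially the same route as the paper's proof: invoke Lemma \ref{lemma exception instantons} to justify applying Theorem \ref{shuffle theorem} with the chamber $\mathfrak{C}_{w,\text{id}}$, specialize the classes $\mathcal{N}_{\hat j}$ and $\mathfrak{N}$ to the Jordan quiver with the non-standard polarization to obtain $\phi$, and solve equation \eqref{equation shift} for the K\"ahler shift. There is a minor sign slip in your intermediate $\delta$ --- the paper finds $\Delta=\Hom(\W'',\V')$, whose determinant carries $\det\V'$-weight $+w''$, and it is the $-h\delta$ inside the translate $\tau(-h\delta)^*$ that produces $z'\mapsto zh^{-w''}$, rather than a $\delta$ of weight $-w''$ --- but your final shifts agree with the paper's and the argument is otherwise correct.
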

Together with the formula for the elliptic stable envelopes for the Hilbert scheme $\Hilb_v=\mathcal{M}(v,1)$ in \cite{smirnov2018elliptic}, this result allows to produce explicit formulas for the stable envelopes of the instanton moduli space $\mathcal{M}(v,w)$.

\begin{proof}[Proof of the theorem]
As noticed in the proof of Lemma \ref{lemma exception instantons} we can retain the construction of the proof of Lemma \ref{Lemma existence splittings}.
As a consequence, applying Theorem \ref{shuffle theorem} we deduce that the stable envelope $\text{Stab}^{v,w}_{\mathfrak{C}_w}(\bm\lambda )(t,a,r,z,h)$ is equal to
\[
\restr{\Shuffle\left\{\frac{\Theta(\mathscr{N}_{\hat j})\tau(-h\delta)^*\left(\text{Stab}^{v',w'}_{\mathfrak{C}_{w'}}(\bm \lambda')(t',a',r,z',h)\text{Stab}^{v'',w''}_{\mathfrak{C}_{w''}}(\bm \lambda'')(t'',a'',r,z'',h)\right)}{\Theta(\mathfrak{N}^\vee)\Theta(h^{-1}\mathfrak{N}^\vee)}\right\}}{z'=z''=z}
\]
with
\[
\mathcal{N}_{\hat j}=r_1\Hom(\V',\V'')+\Hom(\W',\V'')+r_2\Hom(\V',\V'')+h^{-1}\Hom(\V',\W'')
\]
and 
$
\mathfrak{N}=\Hom(\V'',\V').
$
By multiplicativity of the Thom class, we have that 
\begin{align*}
\frac{\Theta(\mathcal{N}_{\hat j})}{\Theta(\mathfrak{N}^\vee)\Theta(h^{-1}\mathfrak{N}^\vee)}&=\frac{\Theta(\Hom(\W',\V'')+h^{-1}\Hom(\V',\W''))\Theta(r_1\Hom(\V',\V''))\Theta(r_2\Hom(\V',\V''))}{\Theta(\Hom(\V',\V''))\Theta(h^{-1}\Hom(\V',\V''))}\\&= \phi(t,a,r,h),
\end{align*}
where in the last step we have used the following identities
\begin{equation*}
\Hom(\W',\V'')+h^{-1}\Hom(\V',\W'')= \sum_{i=v'+1}^{v}\sum_{j=1}^{w'}\frac{t_i}{a_j}+\sum_{i=1}^{v'}\sum_{j=w'+1}^{w}\frac{a_j}{ht_i},
\end{equation*}
\begin{equation*}
	\Hom(\V',\V'')=\sum_{i=1}^{v'}\sum_{j=v'+1}^{v} \frac{t_j}{t_i}.
\end{equation*}
Thus, to prove the theorem, it suffices to compute the $h$-shifts of $z'$ and $z''$ determined by equation \eqref{equation shift}. 
Since in $A\times\Ci_h^\times$-equivariant K-theory we have
\[
 T^{1/2}X^{\vi',\vi''}_{\w}-\mathcal{N}_{\hat j}-T^{1/2}X^{\vi'}_{\w'}-T^{1/2}X^{\vi''}_{\w''}=\Hom(\W'',\V')-h^{-1}\Hom(\V',\W'')+\mathfrak{N}-h^{-1}\mathfrak{N}^\vee
 \]
it follows that 
\begin{align*}
    \Delta-h^{-1}\Delta^\vee=\Hom(\W'',\V')-h^{-1}\Hom(\V',\W'')
\end{align*}
and hence $\Delta=\Hom(\W'',\V')$, which in turn corresponds to the shifts $z'\mapsto zh^{-w''} $ and $z''\mapsto z''$.

\end{proof}

\section*{Appendix: Proof of Proposition \ref{ main proposition }}
\label{appendix}

\textbf{Proposition 5.3.}
Let $F^{\vi'}$ and $F^{\vi''}$ be fixed components in $X_{\w'}^{\vi'}$ and $X_{\w''}^{\vi''}$ respectively, and let $\mathfrak{C}'$ and $\mathfrak{C}''$ be the chambers in $\text{Cochar}(A')\otimes_\Z\R$ and $\text{Cochar}(A'')\otimes_\Z\R$ induced by the inclusions $A'\hookrightarrow A$ and $A''\hookrightarrow A$ respectively.
Assume also the following:

\noindent\textbf{Assumption 5.4}
The restriction of the normal bundle $\mathscr{N}_{\hat j}$ of the attracting manifold 
\[
\hat j:\Att{\overline{C}}{X_{\w'}^{\vi'}\times X_{\w''}^{\vi''}}\hookrightarrow X^{\vi',\vi''}_\w
\]
fits in the following exact sequence of bundles over $F^{\vi'}\times F^{\vi''}$:
\begin{equation*}
    \begin{tikzcd}
    0\arrow[r]& \restr{\mathcal{N}^{X_{\w'}^{\vi'}\times X_{\w''}^{\vi''}}_{\Att{C}{F^{\vi'}\times F^{\vi''}}}}{F^{\vi'}\times F^{\vi''}}\arrow[r]& \restr{\mathcal{N}^{X_{\w}^{\vi',\vi''}}_{\Att{C}{F^{\vi'}\times F^{\vi''}}}}{F^{\vi'}\times F^{\vi''}}\arrow[r]& \restr{\mathcal{N}_{\hat j}}{F^{\vi'}\times F^{\vi''}} \arrow[r] & 0
    \end{tikzcd},
\end{equation*}
where $\mathcal{N}^{X_{\w'}^{\vi'}\times X_{\w''}^{\vi''}}_{\Att{C}{F^{\vi'}\times F^{\vi''}}}$ is the normal bundle of $\Att{C}{F^{\vi'}\times F^{\vi''}}$ in $X_{\w'}^{\vi'}\times X_{\w''}^{\vi''}$ and $\mathcal{N}^{X_{\w}^{\vi',\vi''}}_{\Att{C}{F^{\vi'}\times F^{\vi''}}}$ is the normal bundle of $\Att{C}{F^{\vi'}\times F^{\vi''}}$ in $X_{\w}^{\vi',\vi''}$.
\noindent
Then for any choice of lift $\Stab{C'}{F^{\vi'}}\boxtimes\Stab{C''}{F^{\vi''}})^{lift}$ we have 
\begin{equation}
\label{formula stab refinement}
    \Stab{C}{F^{\vi'}\times F^{\vi''}}=\Theta(\mathcal{N}_{\hat j})(\Stab{C'}{F^{\vi'}}\boxtimes \Stab{C''}{F^{\vi''}})^{lift},
\end{equation}
where $\Stab{C}{F^{\vi'}\times F^{\vi''}}$ is the stable envelope of the fixed component $F^{\vi'}\times F^{\vi''}\subset X_\w^{\vi',\vi''}$ with polarization
\[
T^{1/2}X_{\w'}^{\vi'}+T^{1/2}X_{\w''}^{\vi''}+\mathcal{N}_{\hat j},
\]
defined in Lemma \ref{lemma normal bundles} and Remark \ref{remark extension bundles}.

We give two proofs of the theorem, each one enlightening different aspects of the theory. The second one in particular focuses on the case when the lift $(\Stab{C'}{F^{\vi'}}\boxtimes \Stab{C''}{F^{\vi''}})^{lift}$ is found by abelianization of the stable envelopes $\Stab{C'}{F^{\vi'}}$ and $\Stab{C''}{F^{\vi''}}$, as suggested in Remark \ref{remark existence lifts}.

\begin{proof}[First proof]$ $
\newline
\textbf{Step 1:}
For brevity, let us define $F:=F^{\vi'}\times F^{\vi''}$, let $\ind^{\vi',\vi''}$ be the index of $X^{\vi',\vi''}_\w$ and let $\ind^\vi$ be the index of $X^\vi_\w$.
Firstly, we check that both sides of equation \eqref{formula stab refinement} are morphisms between the same sheaves.
The left hand side is a morphism between the following sheaves over the base $\Base{X_\w^{\vi',\vi''}}{T}$:
\begin{equation}
\label{right sheaves}
    \Theta(T^{1/2}F)\otimes \tau(-h\det\ind^{\vi',\vi''})^*\mathscr{U}_{F}\to \Theta(T^{1/2}X_\w^{\vi',\vi''})\otimes \mathscr{U}_{X_\w^{\vi',\vi''}}\otimes\Theta(h)^{\rk(\ind^{\vi',\vi''})}.
\end{equation}
Explicitly, $\mathscr{U}_{X_\w^{\vi',\vi''}}$ has the automorphy of the function
\[
\prod_{i\in I}\phi(\det\aV',z_i')\phi(\det\aV'',z_i'')\prod_{j }\phi(a_j,z_{a_j})\phi(h,z_h),
\]
and thus by Lemma \ref{automorphy translate universal} $\tau(-h\det\ind^{\vi',\vi''})^*\mathscr{U}_{F}$ has the automorphy of 
\begin{align*}
\prod_{i\in I}\phi(\restr{\det\aV'}{F},z_i')\phi(\restr{\aV''}{F},z_i'')\times
  &\times \phi(h^{-1},\det\ind^{\vi',\vi''})\prod_{j}\phi(a_j,z_{a_j})\phi(h,z_h).  
\end{align*}
Now notice that $\ind^{\vi',\vi''}=\restr{T^{1/2}X_\w^{\vi',\vi''}}{F,>}=\restr{T^{1/2}X_{\w'}^{\vi'}}{F,>}+\restr{T^{1/2}X_{\w''}^{\vi''}}{F,>}=\ind^{\vi'}+\ind^{\vi''}$
because $\restr{\mathscr{N}_{\hat j}}{F,>}=0$ by Lemma \ref{lemma normal bundles}.
As a consequence, it is easy to see that the lift
\begin{equation}
\label{shuffle stab}
    (\Stab{C'}{F^{\vi'}}\boxtimes \Stab{C''}{F^{\vi''}} )^{lift}
\end{equation}
is a morphism between the following sheaves over $\Base{X_\w^{\vi',\vi''}}{T}$:
\begin{align*}
    \Theta(T^{1/2}F)\otimes \tau(-h\det\ind^{\vi',\vi''})^*\mathscr{U}_{F}\to
    \Theta(T^{1/2}X_{\w'}^{\vi'}+T^{1/2}X_{\w''}^{\vi''})\otimes  \mathscr{U}_{X_\w^{\vi',\vi''}}\otimes\Theta(h)^{\rk(\ind^{\vi',\vi''})}.
\end{align*}
Moreover, by our choice of polarization the Thom sheaf $\Theta(T^{1/2}X^{\vi'}_{\w'}+T^{1/2}X^{\vi''}_{\vi''})$ is isomorphic to $\Theta(T^{1/2}X^{\vi',\vi''}_\w-\mathscr{N}_{\hat j})$, so the composition of \eqref{shuffle stab} with multiplication by $\Theta(\mathcal{N}_{\hat j})$ is well defined and gives a morphism between sheaves as in \eqref{right sheaves}.
\newline
\textbf{Step 2: }
Let $\Ci^\times $ be the torus fixing $X^{\vi'}_{\w'}\times X^{\vi''}_{\w''}\subset X^{\vi',\vi''}_{\w}$ as in Remark \ref{Emebdded nakajima varieties as torus-fixed component}, and let $\overline {\mathfrak{C}}$ be the projection of $\mathfrak{C}$ on $\text{Cochar}(\Ci^\times)\otimes_\Z\R$.
By definition of the lift \eqref{shuffle stab} and the assumption of the proposition, it follows that \eqref{formula stab refinement} satisfies the axiom \ref{condition 2 stable env} of stable envelopes, so it suffices to show that it satisfies axiom \ref{Condition 1 stable env} as well. Since the lift \eqref{shuffle formula} has the same support as the stable envelope of $F$ in $X^{\vi'}_{\w'}\times X^{\vi''}_{\w''}$, it suffices to show that the support of the class $\Theta(\mathcal{N}_{\hat j})$ has the same support as $\text{Stab}_{\overline{\mathfrak{C}}}{(X^{\vi'}_{\w'}\times X^{\vi''}_{\w''})}$, i.e. that for every component $Y\subset (X^{\vi',\vi''}_{\w})^{\Ci^\times}$ such that $Y\cap \text{Att}^f_{\overline{\mathfrak{C}}}(X_{\w''}^{\vi'}\times X_{\w''}^{\vi''})=\emptyset$, then $i_Y^*(\Theta(\mathcal{N}_{\hat j}))=0$. Here $i_Y$ is the inclusion map $Y\hookrightarrow X^{\vi',\vi''}_\w$.

Set $E=\text{Att}_{\overline{\mathfrak{C}}}(X_{\w''}^{\vi'}\times X_{\w''}^{\vi''})$, $E^f=\text{Att}^f_{\overline{\mathfrak{C}}}(X_{\w''}^{\vi'}\times X_{\w''}^{\vi''})$ and define $\widetilde X^{\vi',\vi''}_\w=X_\w^{\vi',\vi''}\setminus(E^f\setminus E)$.
Consider the commutative diagram

\begin{equation}
\label{diagram normal}
  \begin{tikzcd}
X_{\w'}^{\vi'}\times X_{\w''}^{\vi''} \arrow[d, swap, bend right, hookrightarrow] \arrow[r,"j", hookrightarrow] & X_\w^{\vi',\vi''}\\
E\arrow[r, hookrightarrow, "\tilde{j}"]\arrow[ur, "\hat j", hookrightarrow]\arrow[u, bend right, swap, "\rho"]  & \widetilde X^{\vi',\vi''}_\w\arrow[u, hookrightarrow]
\end{tikzcd}  
\end{equation}
where
\[
\rho:E\to X_{\w'}^{\vi'}\times X_{\w''}^{\vi''}
\]
sends a point $x\in E$ to $\lim_{z\to 0}\sigma(z)\cdot x\in X_{\w'}^{\vi'}\times X_{\w''}^{\vi''}$, for any (every) $\sigma\in \overline{\mathfrak{C}}$.
Notice that the embedding of $\hat j: E \to X^{\vi',\vi''}_\w$ is not closed but $\tilde j: E \to \widetilde X^{\vi',\vi''}_\w$ is a closed embedding, hence proper. In particular, the composition $\tilde j!\circ \rho^*!$ is well defined, and is given by multiplication by $\Theta(\mathcal{N}_{\hat j})$, the Thom class of the normal bundle of the embedding $\tilde j$. 
Let $k:\widetilde X^{\vi',\vi''}_\w\setminus E\hookrightarrow X^{\vi',\vi''}_\w$ denote the inclusion. By the long exact sequence in elliptic cohomology \cite{ginzburg1995elliptic}
\begin{equation*}
\begin{tikzcd}
0 \arrow[r]& \Theta(-\mathcal{N}_{\hat j})\arrow[r, "\tilde{j}!\circ \rho^*"]& \mathcal{O}_{E_T(\widetilde X^{\vi',\vi''}_\w)}\arrow[r, "k^*"] &\mathcal{O}_{E_T(\widetilde X^{\vi',\vi''}_\w\setminus E)} \arrow[r] & \dots
\end{tikzcd}
\end{equation*}
it follows that the composition $k^*\circ \tilde{j}!\circ \rho^*$ is zero.
As a consequence, if $Y\subset (X^{\vi',\vi''}_{\w})^{\Ci^\times}$ is such that $Y\cap E^f=\emptyset$, then the inclusion $i_Y$ factors as
\begin{equation*}
    \begin{tikzcd}
Y\arrow[d, hookrightarrow, "\tilde i_Y"]\arrow[r, hookrightarrow, "i_Y"]& X^{\vi',\vi''}_\w\\
\widetilde X^{\vi',\vi''}_\w\setminus E \arrow[ru, hookrightarrow, "k"]
    \end{tikzcd}
\end{equation*}
Therefore, we have that \[
i_Y^*\Theta(\mathcal{N}_{\hat j})=\tilde{i}_Y^* \circ k^* \Theta(\mathcal{N}_{\hat j})=\tilde{i}_Y^*\circ  k^*\circ \tilde{j}!\circ \rho^*=0
\]
\end{proof}

\begin{proof}[Second proof] $ $

\noindent\textbf{Step 1:}
In this step we turn the claim of the theorem into an equivalent statement. 

Let $S=S'\times S''\subset G_{\vi',\vi''}\cong G_{\vi'}\times G_{\vi''}$ be a maximal torus. The same arguments of Section \ref{subsection embedding nakajima varieties} allow to define the following inclusion of abelianized varieties
\[
j_{S}:X_{S'}\times X_{S''}\hookrightarrow X_S,
\]
obtained by quotienting the $\theta$-semistable locus of the inclusion 
\[
\mu_{S'}^{-1}(0)\times\mu_{S''}^{-1}(0)\hookrightarrow\mu_{S}^{-1}(0).
\]
Moreover, given a Borel subgroup $B=B'\times B''\subset G_{\vi',\vi''}\cong G_{\vi'}\times G_{\vi''}$ containing $S$ and a Borel subgroup $B\subset G_{\vi}$ containing $B$ we can define closed inclusions 
\begin{equation*}
    \begin{tikzcd}[]
    & \mu^{-1}(0)\arrow[r, hookrightarrow]& \mu^{-1}(\mathfrak{b}^\perp)\arrow[r, hookrightarrow]&\mu_S^{-1}(0)\\ \mu_{\vi'}^{-1}(0)\times \mu_{\vi''}^{-1}(0) \arrow[ur, hookrightarrow]\arrow[r, hookrightarrow] &\mu_{\vi'}^{-1}((\mathfrak{b}')^\perp)\times \mu_{\vi''}^{-1}((\mathfrak{b}'')^\perp) \arrow[ur, hookrightarrow]\arrow[r, hookrightarrow]&\mu_{S'}^{-1}(0)\times \mu_{S''}^{-1}(0) 
    \arrow[ur, hookrightarrow]
    \end{tikzcd}
\end{equation*}
This diagram restricts to the $\theta$-semistable locus and, taking quotients by $S=S'\times S''$ and $G_{\vi',\vi''}$, descends to the following diagram of $T$-varieties: 

\begin{equation*}
    \begin{tikzcd}[column sep=huge]
    & \Fl_{S}\arrow[d, dash]\arrow[r, hookrightarrow, "j_+"]&\widetilde\Fl_{S}\arrow[r, hookrightarrow, "j_-"]&X_S\\ \Fl'\times \Fl''\arrow[dd, "\pi'\times \pi''"] \arrow[ur, hookrightarrow]\arrow[r, hookrightarrow, "j_+'\times j_+''"] &\widetilde\Fl'\times \widetilde\Fl'' \arrow[d, "\pi"] \arrow[ur, hookrightarrow]\arrow[r, hookrightarrow, "j_-'\times j_-''"]&X_{S'}\times X_{S''} \arrow[ur, hookrightarrow]
    \\
    & X_{\w'}^{\vi'\vi''}
    \\
    X_{\w'}^{\vi'}\times X_{\w''}^{\vi''}\arrow[ur]
    \end{tikzcd}
\end{equation*}
Now let $F_S$ be the fixed component in $X_S$ that satisfies the conditions of Lemma \ref{lemma choice componets above 2}. Then the previous diagram can be restricted as follows:
\begin{equation*}
    \begin{tikzcd}[column sep=huge]
    & \Fl^{S}\cap F_S\arrow[d, dash]\arrow[r, hookrightarrow, "(j_+)^{A}"]&\Fl^{S}\cap F_S\arrow[r, hookrightarrow, "(j_-)^{A}"]& F_S\\ \Fl^{\vi'}\times \Fl^{\vi''}\cap F_{S'}\times F_{S''} \arrow[dd, "(\pi'\times \pi'')^{A}"] \arrow[ur, hookrightarrow]\arrow[r, hookrightarrow, "(j_+'\times j_+'')^{A}"] & Fl'\times Fl^{\vi''}\cap F_{S'}\times F_{S''}  \arrow[d, "\pi^{A}"] \arrow[ur, hookrightarrow]\arrow[r, hookrightarrow, "(j_-'\times j_-'')^{A}"]& F_{S'}\times F_{S''} \arrow[ur, equal]
    \\
    & F
    \\
    F^{\vi'}\times F^{\vi''}\arrow[ur,equal]
    \end{tikzcd}
\end{equation*}
The equality $F_{S'}\times F_{S''}=F_S$ can be checked by noticing that $F_{S'}\times F_{S''}$ is a fixed component of $X_S$ and satisfies the conditions of Lemma \ref{lemma choice componets above 2}, and hence by uniqueness it must be equal to $F_S$.
Now notice that we have a commuting diagram 
\begin{equation*}
    \begin{tikzcd}
    \Pic{T}{X_{\w}^{\vi',\vi''}}\arrow[r] \arrow[d] &  \Pic{T}{X_S}\arrow[d]\\
    \Pic{T'}{X_{\w'}^{\vi'}}\times \Pic{T''}{X_{\w''}^{\vi''}}\arrow[r] &\Pic{T'}{X_{S'}}\times \Pic{T''}{X_{S''}},
    \end{tikzcd}
\end{equation*}
where the left vertical map is the one in diagram \eqref{map picards exterior tenso} and the left vertical one is built analogously.
This diagram can be tensored with $E$ to give 
\begin{equation*}
    \begin{tikzcd}
    \Epic{T}{X_{\w}^{\vi',\vi''}}\arrow[r, "\lambda"] \arrow[d, "\psi"] &  \Epic{T}{X_S}\arrow[d, "\psi_S"]\\
    \Epic{T'}{X_{\w'}^{\vi'}}\times \Epic{T''}{X_{\w''}^{\vi''}}\arrow[r,"\lambda'\times \lambda''"] &\Epic{T'}{X_{S'}}\times \Epic{T''}{X_{S''}}.
    \end{tikzcd}
\end{equation*}
Stable envelope of the Nakajima varieties $X_{\w'}^{\vi'}$ and $X_{\w''}^{\vi''}$ make the following diagrams commute:
\begin{equation*}
\label{stable envelopes }
    \begin{tikzcd}[column sep=huge]
    \Theta(T^{1/2}F^{\vi'})\otimes \tau^*\mathscr{U}_{(X_{\w'}^{\vi'})^{A}}\arrow[rr, "(j'_-)^{A}!\circ (((j_+')^{A})^*)^{-1} \circ(\pi')^{A}!)^{-1}"]\arrow[d, "\Stab{C'}{F^{\vi'}}"]& & \Theta(T^{1/2}F_{S'})\otimes \tau^*\mathscr{U}_{X_{S'}^{A}} \arrow[d, "\lambda'^*\Stab{C'}{F_{S'}}"]\\
    \Theta(T^{1/2}X_{\w'}^{\vi'})\otimes \mathscr{U}_{X_{\w'}^{\vi'}}\otimes
    \Theta(h)^{\rk(\ind^{\vi'})}.
    & & \Theta(T^{1/2}X_{S'})\otimes \mathscr{U}_{X_{S'}}\otimes \Theta(h)^{\rk(\ind_{S'})}, \arrow[ll, "\pi'!\circ (j_+')^*\circ (j'_-!)^{-1}"]
    \end{tikzcd}
\end{equation*}
\begin{equation*}
\label{stable envelopes }
    \begin{tikzcd}[column sep=huge]
    \Theta(T^{1/2}F^{\vi'})\otimes \tau^*\mathscr{U}_{(X_{\w''}^{\vi''})^{A}}\arrow[rr, "(j''_-)^{A}!\circ (((j_+'')^{A})^*)^{-1} \circ(\pi'')^{A}!)^{-1}"]\arrow[d, "\Stab{C''}{F^{\vi''}}"]& & \Theta(T^{1/2}F_{S''})\otimes \tau^*\mathscr{U}_{X_{S''}^{A}} \arrow[d, "\lambda''^*\Stab{C''}{F_{S''}}"]\\
    \Theta(T^{1/2}X_{\w''}^{\vi''})\otimes \mathscr{U}_{X_{\w''}^{\vi''}}\otimes \Theta(h)^{\rk(\ind^{\vi''})}
    & & \Theta(T^{1/2}X)\otimes \mathscr{U}_{X_{S''}}\otimes  \Theta(h)^{\rk(\ind_{S''})}.\arrow[ll, "\pi''!\circ (j_+'')^*\circ (j''_-!)^{-1}"]
    \end{tikzcd}
\end{equation*}
Let us define
\[
    \Stab{C'}{F_{S'}}\boxtimes \Stab{C''}{F_{S''}}:=  (\phi\times \psi_S)^*\left( pr'^*\Stab{C'}{F_{S'}}\otimes pr''^*\Stab{C''}{F_{S''}}\right)
\]
and notice that 
\begin{align*}
    \lambda^* \Stab{C'}{F_{S'}}\boxtimes \Stab{C''}{F_{S''}}=&
    \lambda^*\circ  (\phi\times \psi_S)^*\left( pr'^*\Stab{C'}{F_{S'}}\otimes pr''^*\Stab{C''}{F_{S''}}\right)
    \\
    =&(\phi\times \psi)^*\circ (id \times \lambda'\times \lambda'')*\left( pr'^*\Stab{C'}{F_{S'}}\otimes pr''^*\Stab{C''}{F_{S''}}\right).
\end{align*}
Taking the exterior product of the two diagrams, pulling back by $\phi\times \psi$ and lifting the abelianized stable envelopes
as discussed in Remark \ref{remark existence lifts}
we get the following commuting square:
\begin{equation*}
\label{stable envelopes }
    \begin{tikzcd}[column sep=normal]
    \Theta(T^{1/2}F^{\vi'}+T^{1/2}F^{\vi''})\otimes \tau^*\mathscr{U}_{(X_{\w}^{\vi',\vi''})^{A}}\arrow[r]\arrow[d, swap, "(\Stab{C'}{F^{\vi'}}\boxtimes \Stab{C''}{F^{\vi''}})^{lift}"]& \Theta(T^{1/2}F_{S'}+T^{1/2}F_{S''})\otimes \tau^*\mathscr{U}_{X_{S}^{A}}\arrow[d, swap, "\lambda^*(\Stab{C'}{F_{S'}}\boxtimes \Stab{C''}{F_{S''}})^{lift}"]\\
    \Theta(T^{1/2}X_\w^{\vi',\vi''}-\mathscr{N}_{\hat j})\otimes \mathscr{U}_{X_\w^{\vi',\vi''}}\otimes \Theta(h)^{\rk(\ind^{\vi',\vi''})}
    & \Theta(T^{1/2}X_S-\mathscr{N}_{\hat j_S})\otimes \mathscr{U}_{X_{S}}\otimes \Theta(h)^{\rk(\ind_S)} \arrow[l]
    \end{tikzcd}
\end{equation*}
where the upper horizontal map is 
\[
(j_-'\times j_-'')^{A}!\circ (((j_+'\times j_+'')^{A})^*)^{-1} \circ((\pi'\times \pi'')^{A}!)^{-1},
\]
the lower one is 
\[
(\pi'\times\pi'')!\circ (j_+'\times j_+'')^*\circ ((j_-'\times j''_-)!)^{-1}
\]
and $\mathcal{N}_{\hat j_S}$ is the normal bundle of the embedding $\hat j_S$ fitting in the following diagram
\begin{equation*}
  \begin{tikzcd}
  X_{S'}\times X_{S''} \arrow[d, swap, bend right, hookrightarrow] \arrow[r,"j_{S}", hookrightarrow] & X_S\\
\Att{\overline{C}}{X_{S'}\times X_{S''}}\arrow[ur, swap, "\hat j_S", hookrightarrow]\arrow[u, bend right, swap, "\rho_S"]  &
\end{tikzcd}  
\end{equation*}
In particular, the previous diagram can serve to define a lift $(\Stab{C'}{F^{\vi'}}\boxtimes \Stab{C''}{F^{\vi''}})^{lift}$.
Notice that from Lemma \ref{lemma normal bundles} and Remark \ref{remark extension bundles} it follows that the classes of $\mathcal{N}_{\hat j}$ and $\mathcal{N}_{\hat j_S}$ are  $\mathcal{M}(\aV',\aV'', \aW', \aW'')$ and $\mathcal{M}(\aV'_S,\aV''_S, \aW'_S, \aW''_S)$ respectively, where $\aV'_S$, $\aV''_S$, $\aW'_S$ and $\aW''_S$ are the tautological bundles on $X_S$ defined as in Section \ref{Tautological bundles}.

Now consider the following extension of the previous diagram:
\begin{equation*}
\label{stable envelopes }
    \begin{tikzcd}[column sep=normal]
    \Theta(T^{1/2}F^{\vi'}+T^{1/2}F^{\vi''})\otimes \tau^*\mathscr{U}_{(X_\w^{\vi',\vi''})^{A}}\arrow[r]\arrow[d, swap, "(\Stab{C'}{F^{\vi'}}\boxtimes \Stab{C''}{F^{\vi''}})^{lift}"]& \Theta(T^{1/2}F_{S'}+T^{1/2}F_{S''})\otimes \tau^*\mathscr{U}_{X_{S}^A}\arrow[d, swap,  "\lambda^*(\Stab{C'}{F_{S'}}\boxtimes \Stab{C''}{F_{S''}})^{lift}"]\\
    \Theta(T^{1/2}X_\w^{\vi',\vi''}-\mathscr{N}_{\hat j})\otimes \mathscr{U}_{X_\w^{\vi',\vi''}}\otimes \Theta(h)^{\rk(\ind^{\vi',\vi''})} 
    \arrow[d, swap, "\Theta(\mathcal{N}_{\hat j})"]& \Theta(T^{1/2}X_S-\mathscr{N}_{\hat j_S})\otimes \mathscr{U}_{X_{S}}\otimes \Theta(h)^{\rk(\ind_S)}  \arrow[l]\arrow[d, swap, "\Theta(\mathcal{N}_{\hat j_S})"]\\
    \Theta(T^{1/2}X_\w^{\vi',\vi''})\otimes \mathscr{U}_{X_\w^{\vi',\vi''}}\otimes\Theta(h)^{\rk(\ind^{\vi',\vi''})} 
    & \Theta(T^{1/2}X_S)\otimes \mathscr{U}_{X_{S}}\otimes\Theta(h)^{\rk(\ind_S)}. \arrow[l, "\pi!\circ (j_+)^*\circ ((j_-)!)^{-1}"]
    \end{tikzcd}
\end{equation*}
We claim that the lower square commutes: $\Theta(\mathcal{N}_{\hat j})$ is multiplication by $\Theta(\mathscr{M}(\aV',\aV'',\aW',\aW''))$ and similarly $\Theta(\mathcal{N}_{\hat j_S})$ is multiplication by $\Theta(\mathscr{M}(\aV',\aV'',\aW',\aW''))$. Moreover, we have that 
\[
(j_+'\times j_+'')^*\circ(j_-'\times j_-'')^*\circ (j)^*\mathscr{M}(\aV',\aV'',\aW',\aW'')=(\pi'\times \pi'')^*\circ(j_S)^*\mathscr{M}(\aV'_S,\aV''_S,\aW'_S,\aW''_S),
\]
so the claim follows. As a consequence, the outer frame in the previous diagram commutes. But the horizontal maps are exactly the horizontal maps in \eqref{diagram abelianization pseudoabelianization}, so in order to prove the theorem it suffices to check that the right vertical map is the stable envelope of the abelianization $X_S$.
\newline
\textbf{Step 3: }As observed before, we are left to show that 
\begin{equation}
\label{last thing to show}
    \Theta(\mathcal{N}_{\hat j_S})\circ (\Stab{C'}{F_{S'}}\boxtimes \Stab{C''}{F_{S''}})^{lift}=\Stab{C}{F_S}.
\end{equation}
This statement is easier to prove because stable envelopes of the the hypertoric variety $X_S$ have an explicit description given in \cite[Section 4.1]{aganagic2016elliptic}. Let us briefly recall this description here: first of all, there exists en extension $i:{A}\hookrightarrow \widetilde A$, where $\widetilde A$ is a torus acting on $X_S$ in such a way that the fixed components are zero-dimensional, i.e. are just points. Moreover, given a chamber $\widetilde{\mathfrak{C}}$ of $\widetilde A$ that restricts to $\mathfrak{C}$ on $A$, the following equation holds:
\[
\text{Stab}^X_{\mathfrak{C}}(F_S)=\text{Stab}_{\widetilde{\mathfrak{C}}}^{X_S}\circ\left(\text{Stab}^{F_S}_{\widetilde{\mathfrak{C}}/\mathfrak{C}}\right)^{-1},
\]
where $\text{Stab}^{F_S}_{\widetilde{\mathfrak{C}}/\mathfrak{C}}$ is the stable envelope of the $\widetilde A/A$-variety $F_S$ and $\widetilde{\mathfrak{C}}/\mathfrak{C}$ denotes the chamber induced by the quotient map $\widetilde A\to\widetilde A/A$.
In our setting, we have that $F_S=F_{S'}\times F_{S''}$, so we deduce that 
\[
\text{Stab}^{F_S}_{\widetilde{\mathfrak{C}}/\mathfrak{C}}=\text{Stab}^{F_{S'}}_{\widetilde{\mathfrak{C}}'/\mathfrak{C}'}\boxtimes \text{Stab}^{F_{S''}}_{\widetilde{\mathfrak{C}}''/\mathfrak{C}''}.
\]
On the other hand
\[
\left(\text{Stab}^{X_{S'}}_{{\mathfrak{C}'}}\boxtimes \text{Stab}^{X_{S''}}_{{\mathfrak{C}''}}\right)^{lift}=\left(\text{Stab}^{X_{S'}}_{\widetilde{\mathfrak{C}}'}\boxtimes \text{Stab}^{X_{S''}}_{\widetilde{\mathfrak{C}}''}\right)^{lift}\circ\left( \text{Stab}^{F_{S'}}_{\widetilde{\mathfrak{C}}'/\mathfrak{C}'}\boxtimes \text{Stab}^{F_{S''}}_{\widetilde{\mathfrak{C}}''/\mathfrak{C}''}\right)^{-1}.
\]
Therefore, to check \eqref{last thing to show} it suffices to show that there exist a chamber $\widetilde{\mathfrak{C}}$ restricting to $\mathfrak{C}$ on $A$ such that the equation
\begin{equation}
\label{last euation main proof}
    \Theta(\mathcal{N}_{\hat j_S})\left(\text{Stab}^{X_{S'}}_{\mathfrak{\widetilde{C}}'}(F')\boxtimes \text{Stab}^{X_{S''}}_{\mathfrak{\widetilde{C}}''}(F'')\right)^{lift}=\text{Stab}^{X_S}_{\mathfrak{\widetilde{C}}}(F'\times F'')
\end{equation}
holds for all $\widetilde A$-fixed points $F'\times F'' \in F_{S'}\times F_{S''}$.
As for the chamber, choose $\widetilde{\mathfrak{C}}$ such that $\chi$ is $\widetilde{\mathfrak{C}}$-attracting whenever $i^*\chi$ is $\mathfrak{C}$-attracting for all characters $\chi$ of the normal bundle of a fixed point $F'\times F''$. This in particular implies that the $\widetilde A$ vector bundle $\mathcal{M}(\aV',\aV'', \aW', \aW'')$ has repelling weights for all $F'\times F''\subset F_{S'}\times F_{S''}$.
By the same arguments of the first step of the first proof, one checks that both sides of equation \eqref{last euation main proof} are morphisms between the same line bundles, so by uniqueness of stable envelopes, it suffices to check that they satisfy the two conditions of Definition \ref{defn stable envelopes}.

Stable envelopes of hypertoric varieties acted on by a torus whose fixed components are zero dimensional are explicitly computed off-shell in \cite[Section 4.1 (56)]{aganagic2016elliptic}, and are of the form
\[
\text{Stab}^{X_S}_{\mathfrak{C}}(F)=\Theta(N_{F,<}^{X_S})W_{F},
\]
where $\Theta(N_{F,<}^{X_S})$ is the Thom section of a class $N_{F,<}^{X_S}$ that, restricted to $F$, coincides with the repelling part of the normal bundle $N_{F}^{X_S}$ of $F$, and $W$ is a function of Chern roots of tautological bundles, equivariant variables and dynamical parameters. In particular $\restr{W_{F_S}}{F_S}=1$ and $\restr{W_{F_S}}{F_S'}\neq 0$ for every $F_S'\neq F_S$, so the conditions \ref{Condition 1 stable env} and \ref{condition 2 stable env} of stable envelopes are uniquely determined by $\Theta(N^{X_S}_{F_S,<})$.
By Assumption \ref{Assumption main theorem} and additivity of the Thom class, we have that 
\[
\Theta\left(N^{X_S}_{F'\times F'',<}\right)=\Theta\left(N^{X_{S'}}_{F',<}\right)\Theta\left(N^{X_{S''}}_{F'',<}\right)\Theta(\mathcal{M}(\aV',\aV'',\aW',\aW'')).
\]
On the other hand, $\text{Stab}^{X_{S'}}_{\widetilde{\mathfrak{C}'}}(F')=\Theta\left(N^{X_{S'}}_{F',<}\right)W_{F'}$ and $\text{Stab}^{X_{S''}}_{\widetilde{\mathfrak{C}''}}(F'')=\Theta\left(N^{X_{S''}}_{F'',<}\right)W_{F''}$, so the left hand side of equation \eqref{last euation main proof} is the section
\[
\Theta\left(N^{X_S}_{F'\times F'',<}\right)=\Theta\left(N^{X_{S'}}_{F',<}\right)\Theta\left(N^{X_{S''}}_{F'',<}\right)\Theta(\mathcal{M}(\aV',\aV'',\aW',\aW''))W_{F'}W_{F''}.
\]
But this implies that both sides of equation \eqref{last euation main proof} satisfy Definition \ref{defn stable envelopes}, so by uniqueness of stable envelopes they must be the same section. 
This concludes the proof.
\end{proof}

\newpage
\printbibliography

\Addresses

\end{document}